\documentclass[reqno]{amsart}
\usepackage[backend=bibtex,url=false]{biblatex}
\renewbibmacro{in:}{}
\addbibresource{refs.bib}

\usepackage{float}
\usepackage{amsmath}
\usepackage{amsthm}
\usepackage{graphicx}
\usepackage{latexsym}
\usepackage{amsfonts}
\usepackage{amssymb}
\usepackage{hyperref}
\setcounter{MaxMatrixCols}{10}

\theoremstyle{plain}
\newtheorem{theorem}{Theorem}
\newtheorem{corollary}[theorem]{Corollary}
\newtheorem{lemma}[theorem]{Lemma}
\newtheorem{proposition}[theorem]{Proposition}
\theoremstyle{definition}

\newtheorem{example}[theorem]{Example}

\newtheorem{remark}[theorem]{Remark}
\newtheorem*{remark*}{Remark}

\renewcommand{\Pr}{\mathbf P}

\newcommand{\ee}{\varepsilon}
\newcommand{\ff}{\varphi}
\newcommand{\oo}{\overline}
\newcommand{\uu}{\underline{g}}

\begin{document}

\title[First-passage times for walks with non-iid increments]
{First-passage times for random walks with non-identically distributed increments.}


\author[Denisov]{Denis Denisov}
\address{School of Mathematics, University of Manchester, UK}
\email{denis.denisov@manchester.ac.uk}

\author[Sakhanenko]{Alexander Sakhanenko}
\address{Novosibirsk State University, 
 630090 Novosibirsk, Russia}
\email{aisakh@mail.ru}

\author[Wachtel]{Vitali Wachtel}
\address{Institut f\"ur Mathematik, Universit\"at Augsburg, 86135 Augsburg, Germany}
\email{vitali.wachtel@mathematik.uni-augsburg.de}


\begin{abstract}
We consider random walks with independent but not necessarily identical
distributed increments. Assuming that the increments satisfy the well-known Lindeberg
condition, we investigate the asymptotic behaviour of first-passage times over moving
boundaries. Furthermore, we prove that a properly rescaled random walk conditioned to
stay above the boundary up to time $n$ converges, as $n\to\infty$, towards the
Brownian meander.
\end{abstract}

\keywords{Random walk, Brownian motion, first-passage time, overshoot, moving boundary}
\subjclass{Primary 60G50; Secondary 60G40, 60F17}


\maketitle

\section{Introduction and main results}

\subsection{Introduction}
Let $X_k$, $k\geq1$, be independent random variables and consider a random walk
$$
S_n:=X_1+X_2+\ldots+X_n,\ n\geq1.
$$
For a real-valued sequence  $\{g_n\}$  let
\begin{equation}
\label{def2}
T_g:=\min\{n\geq1:S_n\leq g_n\}
\end{equation}
be the first crossing of the moving boundary $g_n$ by $S_n$.
The main purpose of the present paper is to study the asymptotic behaviour of the distributions of
first-passage times over moving boundaries
\begin{equation*}                                           
  \Pr(T_g>n), \quad n\to\infty,
\end{equation*}
for random walks with non-identically distributed increments
in the domain of attraction of the Brownian motion.
An important particular case of this problem is the case of a constant
boundary $g_n\equiv-x$ for some $x$.
In this case $T_g\equiv\tau_x$, where
$$
\tau_x:=\min\{n\geq1:S_n\leq -x\}.
$$

If all $X_k$'s have identical distribution and  $S_n$ is oscillating then the problem
of finding asymptotics
$$
\Pr(\tau_x>n),\quad n\to \infty,
$$
has attracted considerable attention and is well understood.
In this case the following elegant  result
(see Doney \cite{Don95}) is available for asymptotically stable
random walks: if
\begin{equation*}                                        
\Pr(S_n>0)\to\rho\in(0,1)
\end{equation*}
then, for every fixed $x\geq0$,
\begin{equation}                           \label{iid}
\mathbf{P}(\tau_x>n)\sim V(x) n^{\rho-1}L(n),
\end{equation}
where $V(x)$ denotes the renewal function corresponding to the weak descending ladder height process.
(Here and in what follows all unspecified limits are taken with respect to $n\to\infty$.)

In particular, if $\mathbf{E}X_1=0$ and $\mathbf{E}X_1^2<\infty$ (we are still in the i.i.d. case) then the ladder heights have finite expectations and,
consequently, for every fixed $x\geq0$,
\begin{equation}
\label{finite_variance}
\mathbf{P}(\tau_x>n)\sim\sqrt{\frac{2}{\pi}}\frac{\mathbf{E}[-S_{\tau_x}]}{\sqrt{n}}.
\end{equation}

The use of the  Wiener-Hopf factorisation is a traditional approach to
derivation of \eqref{iid} and \eqref{finite_variance}.
In turn, the Wiener-Hopf factorisation essentially relies on the following important properties:
\begin{itemize}
\item[(a)] duality relation: if
$X_1,X_2,\ldots,X_n$ are independent and identically distributed then the distribution of random path
$\{S_k,k\leq n\}$ coincides with that of
$\{S_n - S_{n-k}; k\le n\}$ after duality transformation;
\item[(b)] simple geometry of semi-infinite intervals of the real line, which is well adapted to the
duality transformation.
\end{itemize}

Now what if the increments $X_k$  have different distributions, as we assume in this paper?
Clearly one loses the duality property and therefore there is no hope to generalise the factorisation
approach via the Wiener-Hopf identities to such random walks.
Moreover, when we consider  moving boundaries  the benefits of the simple geometry of
fixed semi-infinite intervals are no longer available.
Naturally  this leads to the following
question: how can one investigate first-passage times of random walks with non-identically
distributed increments? In the present paper we suggest to use the {\em universality approach}.

The suggested  approach is based on the  universality of  the Brownian motion that attracts
random walks with the finite variance.
To see the connection between boundary problems for
random walks and the Brownian motion
consider a similar problem for the Brownian motion and
define for each $x>0$ the stopping time
$$
\tau_x^{bm}:=\inf\{t>0:x+W(t)\leq0\}.
$$
Then, for every fixed $x>0$,
$$
\mathbf{P}(\tau_x^{bm}>t)\sim\sqrt{\frac{2}{\pi}}\frac{x}{\sqrt{t}}, \quad t\to\infty.
$$
Noting that the continuity of paths of the Brownian motion yields the equality
$x=\mathbf{E}[-W(\tau^{bm}_x)]$, we obtain
\begin{equation}
\label{bm}
\mathbf{P}(\tau_x^{bm}>t)\sim\sqrt{\frac{2}{\pi}}\frac{\mathbf{E}[-W(\tau^{bm}_x)]}{\sqrt{t}},
\quad t\to\infty.
\end{equation}
Comparing \eqref{finite_variance} and \eqref{bm}, we see that the asymptotic  behaviour of the tail of $\tau_x$
for any random walk with i.i.d. increments having zero mean and finite variance
coincides, up to a constant, with that of $\tau_x^{bm}$. Having this in mind one may assume that a
version of \eqref{finite_variance} should be valid for all random walks from the normal domain of
attraction of the Brownian motion.

We will now briefly indicate how we can use universality of the Brownian motion to establish
\eqref{finite_variance}. Consider   the easier case 
when random walk crosses the level
$-x_n=-uB_n$, where $u>0$ is a fixed
number and $B_n$ is the norming sequence in the
Functional Central Limit Theorem(FCLT).
Then, by the FCLT , we have the relation
\begin{align*}
\mathbf{P}(\tau_{x_n}>n)&=\mathbf{P}(x_n+\min_{k\le n}S_k>0)=\mathbf{P}(u+\min_{k\le n}S_k/B_n>0)
\\
&\to \mathbf{P}(u+\min_{t\le1}W(t)>0)=
\mathbf{P}(\tau_{x_n}^{bm}>B_n).
\end{align*}
Since one always has a certain rate of convergence in the functional CLT, the same relation remains
valid for $u=u_n$ decreasing to zero sufficiently slow.
Namely, if $u_n$ goes to zero slower than the rate of convergence,  then for
$x_n=u_nB_n$ we have
$$
\mathbf{P}(\tau_{x_n}>n)\sim\mathbf{P}(\tau_{x_n}^{bm}>B_n)
\sim\sqrt{\frac{2}{\pi}}\frac{x_n}{B_n}.
$$

It is not at all clear, how to use the FCLT in the case of a fixed~$x$.
In this case a direct application of the universality  results in  significant errors due to the
FCLT approximation.  However, this method becomes applicable when supplemented with
probabilistic understanding of the typical behaviour of a random walk staying
above $g_n$ for a long time.

The universality approach  to the analysis of the asymptotics for first passage times
is a far more general method than the Wiener-Hopf factorisation.
It has already been used in several instances, where the Wiener-Hopf method does not seem to be applicable
because of either the complex  geometry and/or problems with duality.
\begin{itemize}

\item Ordered random walks \cite{DW10}, \cite{KS10}, \cite{DW12}.
These papers studied the exit times of multidimensional random walks
from Weyl chambers.

\item Random walks in cones \cite{DW15}, where the exit times of multidimensional random walks
from general cones were studied.

\item Integrated random walks \cite{DW15b}, where a two-dimensional Markov chain was considered
to study exit times for integrated random walk.

\item Conditioned limit theorems for products of random matrices, see \cite{GPP16}.

\item Limit theorems for Markov walks conditioned to stay positive, see \cite{GPP16a} and \cite{GPP16b}.

\end{itemize}
Besides asymptotic results we can use
the universality approach to construct conditioned processes and prove functional limit theorems
for conditioned process.

There are 4 main steps in the universality approach used in the above papers:
\begin{enumerate}
  \item Show the repulsion from the boundary, which allows the random walks to reach quickly the high level
  of order $B_n^{1-\varepsilon}$
  \item Use the repulsion and recursive estimates to show the finiteness of
  expectation of the overshoot over the high level.
  \item Use strong coupling (KMT) to replace the trajectory of a random walk with the Brownian motion
  after the reaching of the high level. Apply asymptotics for the crossing time by the Brownian motion.
  \item Use the finiteness of the expectation of the overshoot for the additional control of the error
  in the approximation.
\end{enumerate}
The method is potentially applicable to the analysis of a large class of stochastic processes.
However, the main restriction of the method  was the necessity to use a strong coupling,
which is difficult to prove and is rarely available. For example, papers
 \cite{GPP16}, \cite{GPP16a} and \cite{GPP16b}  depend  on \cite{GPP14}, where an FCLT with a rate
of convergence (strong coupling) was proved.
The present paper deals with this deficiency and allows one to use directly the FCLT instead
of the strong coupling.
This is an important {\em  methodological novelty} of the present paper besides a number of
a new results.
  FCLT holds in a number of situation and we plan to develop the methodology  further
to study exit times (including higher dimensions) for  other stochastic processes.


\subsection{Statement of main results}
We shall always assume that
$$
\mathbf{E} X_k=0\ \text{ and }\ 0<\sigma_k^2:=\mathbf{E}X_k^2<\infty\ \text{ for all }k\geq1.
$$
Define $S_0=B^2_0=0$ and
$$
B^2_n:=\sum_{k=1}^n \sigma_k^2
,\quad n\geq1.
$$

About real numbers $\{g_n\}$ used in definition \eqref{def2} we assume that
\begin{equation}
\label{good}
 g_n=o(B_n)
\end{equation}
and
\begin{equation}
\label{must}
\mathbf{P}(T_g>n)>0\quad\text{for all }n\geq1.
\end{equation}
It is worth mentioning that assumption \eqref{must} is equivalent to the following condition
$$
\sum_{k=1}^n {\rm essup} X_k>g_n\quad\text{for all }n\geq1,
$$
where ${\rm essup} X_k:=\sup\{x:\mathbf{P}(X_k\geq x)>0\}$.
\hfill$\diamond$

To formulate our main results we introduce the classical random broken line process
\begin{gather}\label{T4.0}
s(t)=S_k+X_{k+1}\frac{(t-B^2_k)}{\sigma^2_{k+1}}\ \ \text{ for }\ \ t\in[B^2_k,B^2_{k+1}], \ \ k\geq1.
\end{gather}
We always consider
\begin{equation}\label{T4.0.scaled}
s_n(t):={s(tB_n^2)}/{B_n}
\end{equation}
as random process defined for $ t\in[0,1]$ with values in the space $C[0,1]$ of
continuous functions endowed with the supremum norm.
It is well known that the \emph{Lindeberg condition}
\begin{equation}
\label{lind.cond}
L_n^2(\varepsilon):=\frac{1}{B_n^2}\sum_{k=1}^n\mathbf{E}[X_k^2;|X_k|>\varepsilon B_n]\to 0\quad
\text{for every }\varepsilon>0
\end{equation}
is necessary and sufficient for the validity of the FCLT for
$s_n(\cdot)$.

\begin{theorem}                                                                                                 \label{T1}
Assume that conditions \eqref{good}, \eqref{must} and \eqref{lind.cond} hold. Then  the distribution of the process $s_n(\cdot)$,
conditioned on $\{T_g>n\}$, converges weakly on $C[0,1]$ towards the Brownian meander. In particular,
\begin{equation}
\label{joint}
\mathbf{P}\left(S_n>g_n+vB_n\big|T_g>n\right)\to e^{-v^2/2} \quad\text{ for\ all }\  v\ge0.
\end{equation}
\end{theorem}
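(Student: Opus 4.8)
The plan is to follow the four-step universality scheme outlined in the introduction. Write $\tau_x$ and its moving-boundary analogue $T_g$ as in \eqref{def2}, and let $B_n$ be the norming sequence. Fix a small $\varepsilon>0$ and introduce the intermediate level $h_n:=B_n^{1-\varepsilon}$. The first step is a \emph{repulsion estimate}: one shows that, conditionally on $\{T_g>n\}$, with probability tending to one the walk reaches a point above $g_m+h_m$ at some time $m=m_n=o(n)$ while still staying above $\{g_k\}$. Heuristically, a walk that survives until time $n$ must climb away from the boundary early on, because surviving at a height $o(h_n)$ costs an extra factor that is negligible compared with $\Pr(T_g>n)$; this is made quantitative by comparing $\Pr(T_g>n)$ with $\Pr(T_g>m,\ S_m\le g_m+h_m)$ and using a crude lower bound of the correct polynomial order $1/\sqrt{n}$ for the former. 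The Lindeberg condition \eqref{lind.cond} and the smallness hypothesis \eqref{good} are exactly what make the one-dimensional and functional CLT estimates used here uniform in the relevant range.

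The second step uses this repulsion together with recursive (Doob-type) inequalities to prove that the overshoot $S_{T_h}-g_{T_h}$ over the level $g_k+h_k$, evaluated on $\{T_g>n\}$, has a uniformly integrable (in fact uniformly bounded first moment) distribution; this is the analogue of the finiteness of $\mathbf{E}[-S_{\tau_x}]$ in \eqref{finite_variance}. The third step is the heart of the argument: starting from time $m_n$ at a height of order $h_n$, we no longer need a strong coupling. Instead we apply the FCLT directly to the rescaled process $s_n(\cdot)$ on the remaining time interval. Because the starting height, rescaled by $B_n$, is $B_n^{-\varepsilon}\to 0$, the walk's trajectory is, after rescaling, an arbitrarily small perturbation of a Brownian path started at $0$; conditioning it to stay above the (also $o(1)$ after rescaling) boundary $g_k/B_n$ and invoking the convergence of the normalized Brownian path conditioned to stay positive to the Brownian meander gives the candidate limit. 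The overshoot control from step two is used to absorb the error coming from the fact that $s_n$ overshoots the level $g_k+h_k$ rather than hitting it exactly, and to justify interchanging the limit with the conditioning.

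Concretely, I would prove \eqref{joint} first and then upgrade to the full process convergence. For \eqref{joint}: condition on $\mathcal F_{m_n}$, write
\[
\mathbf{P}(S_n>g_n+vB_n\mid T_g>n)
=\frac{\mathbf{E}\bigl[\mathbf 1\{T_g>m_n\}\,\mathbf{P}(S_n>g_n+vB_n,\ \min_{m_n<k\le n}(S_k-g_k)>0\mid\mathcal F_{m_n})\bigr]}{\mathbf{P}(T_g>n)},
\]
evaluate the inner probability by the FCLT for the walk restarted at $S_{m_n}$ (the starting height, rescaled, is $o(1)$ uniformly on the event that step one gives us), obtaining the Brownian quantity $\mathbf{P}(y+W \text{ stays}>0 \text{ on }[0,1],\ y+W(1)>v)$ with $y\to0$, and then use the explicit meander formula $\mathbf{P}(W(1)>v\mid W>0\text{ on }[0,1])=e^{-v^2/2}$ after normalizing by $\mathbf{P}(T_g>n)$, which by the same FCLT computation is asymptotically $\mathbf{E}[-s(\text{overshoot})]\cdot\sqrt{2/\pi}\,/\sqrt{n}$ times the survival probability of $y+W$. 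The functional statement follows the same pattern, replacing the one-dimensional event by an arbitrary bounded continuous functional on $C[0,1]$ and using that the conditioned rescaled walk on $[m_n/n,1]$ converges to the meander while the piece on $[0,m_n/n]$ is asymptotically negligible in sup-norm.

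The main obstacle is step one, the repulsion estimate, in the non-identically-distributed moving-boundary setting: without duality one cannot appeal to ladder-height representations, so the early climb away from $\{g_k\}$ must be extracted purely from CLT-type bounds that are uniform over the non-stationary increments, and one must simultaneously control the boundary $\{g_k\}$ in this range. Establishing the correct-order lower bound $\Pr(T_g>n)\gtrsim 1/\sqrt n$ and the matching uniform integrability of the overshoot (step two) without the Wiener--Hopf machinery is the technically heaviest part; the FCLT application in step three is then comparatively soft, precisely because the intermediate level $B_n^{1-\varepsilon}$ is chosen to dwarf the CLT approximation error while being negligible after rescaling by $B_n$.
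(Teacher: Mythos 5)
Your plan reproduces the four-step scheme sketched in the introduction, but in the form you state it the key quantitative inputs are not available under the hypotheses of Theorem~\ref{T1}, and this is exactly the difficulty the paper is designed to overcome. First, your ``crude lower bound of the correct polynomial order $1/\sqrt n$'' for $\mathbf{P}(T_g>n)$ does not exist here: the correct normalisation is $B_n$, not $\sqrt n$ (e.g.\ for weighted walks $B_n=n^{p+1/2+o(1)}$), and even $\mathbf{P}(T_g>n)\gtrsim 1/B_n$ is false in general, since by \eqref{main} the tail is $U_g(B_n^2)/B_n$ with $U_g$ slowly varying and possibly tending to $0$ or $\infty$. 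Second, your step two claims the overshoot has uniformly bounded first moment, ``the analogue of the finiteness of $\mathbf{E}[-S_{\tau_x}]$''; Example~\ref{anti-Lind} and Theorem~\ref{T4} show that under \eqref{good}, \eqref{must}, \eqref{lind.cond} alone one can have $\mathbf{E}[-S_{T_g}]=\infty$, so no absolute moment bound of this kind can be proved. What the paper actually establishes, and what the proof of Theorem~\ref{T1} really needs, is a \emph{relative} control: the two-sided comparison $B_n\mathbf{P}(T_g>n)\asymp\mathbf{E}Z^*_n$ coming from the stochastic-domination Lemma~\ref{Lup1}--\ref{Lup2} plus a Fatou/CLT argument, the martingale identities of Lemma~\ref{Lmart} linking $\mathbf{E}Z^*_{\nu_m}$ to $\mathbf{E}Z^*_n$, and the bound $\mathbf{E}[Z^*_{\nu_m};Z^*_{\nu_m}>3B_m]=o(\mathbf{E}Z^*_n)$ of Lemma~\ref{over5}, proved via the truncated Lindeberg fraction and slow variation of $U_g$. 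Your ``Doob-type recursive estimates'' do not supply a substitute for this machinery.

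Third, the fixed intermediate level $h_n=B_n^{1-\varepsilon}$ only ``dwarfs the CLT approximation error'' if that error decays polynomially in $B_n$, i.e.\ if a strong (KMT-type) coupling is available --- precisely the assumption the paper removes. Under the Lindeberg condition alone the Prokhorov distance $\pi_n$ in Lemma~\ref{L.pi} can tend to $0$ arbitrarily slowly, so $\pi_nB_n$ may be much larger than $B_n^{1-\varepsilon}$ and your comparison of the restarted walk with a Brownian path started near $0$ breaks down. The paper's remedy is to choose the level adaptively, $B_{m(n)}\gtrsim(\rho_n^{1/3}+\lambda_n^{1/4})B_n$ as in \eqref{oo1}, where $\rho_n$ is built from $\pi_n$ and $G_n/B_n$ and $\lambda_n$ from the Lindeberg fractions, and then to run the argument you outline (Markov property at $\nu_{m(n)}$, negligibility of the events $\{Z^*_{\nu_{m(n)}}\le\rho_n^{2/3}B_n\}$ and $\{Z^*_{\nu_{m(n)}}>2B_{m(n)}\}$, coupling on the event $A_n$, and the Durrett--Iglehart--Miller meander limit) with all error terms measured relative to $\mathbf{E}Z^*_n$ rather than against a presumed $1/\sqrt n$ tail with a finite overshoot constant. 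As written, your argument would prove the theorem only under extra assumptions (a polynomial FCLT rate and finiteness of the overshoot moment), not in the stated generality.
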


Relation \eqref{joint} and the functional limit theorem generalise corresponding results of Greenwood
and Perkins~\cite{GP83,GP85}, where the case of i.i.d. increments satisfying
$\mathbf{E}[X_1^2\log(1+|X_1|)]<\infty$ and monotone decreasing boundaries has been considered.
In the case of i.i.d. increments and constant boundaries these limit theorems have been obtained by
Bolthausen~\cite{Bolt76}. We are not aware of any similar results for random walks with non-identically
distributed increments.

\begin{theorem}                                                                                                 \label{T2}
Under assumptions of Theorem \ref{T1},
\begin{equation}
\label{main}
\mathbf{P}(T_g>n)\sim\sqrt{\frac{2}{\pi}}\frac{U_g(B_n^2)}{B_n},
\end{equation}
where $U_g$ is a positive, slowly varying function with the values
\begin{equation}
\label{main+}
0<U_g(B_n^2)=\mathbf{E}[S_n-g_n;T_g> n]
\sim\mathbf{E}[-S_{T_g};T_g\le n].
\end{equation}
\end{theorem}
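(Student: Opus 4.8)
The plan is to derive \eqref{main} and \eqref{main+} from the functional limit theorem of Theorem \ref{T1}, using the martingale structure of $S_n$ under the conditioning as the key bookkeeping device. First I would establish the identity $U_g(B_n^2)=\mathbf E[S_n-g_n;T_g>n]$ by a telescoping argument: since $\mathbf E X_k=0$, the stopped sequence $S_{n\wedge T_g}$ is a martingale, and on $\{T_g\le n\}$ we have $S_{T_g}\le g_{T_g}$, so writing $\mathbf E[S_n;T_g>n]=\mathbf E S_n - \mathbf E[S_{n\wedge T_g};T_g\le n]=-\mathbf E[S_{T_g};T_g\le n]$ gives the relation between the two expressions in \eqref{main+} up to controlling the deterministic term $g_n$ and the difference $g_{T_g}$ versus $S_{T_g}$ (the overshoot). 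Positivity of $U_g(B_n^2)$ follows from \eqref{must} together with $g_n=o(B_n)$, since on $\{T_g>n\}$ of positive probability one has $S_n>g_n$.

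Next I would prove that $n\mapsto U_g(B_n^2)$, suitably interpreted as a function of $B_n^2$, is slowly varying. The natural route is to show $U_g(B_{n}^2)\big/U_g(B_m^2)\to1$ whenever $B_n^2/B_m^2\to1$, and more generally to identify the limiting ratio via the Markov property: conditioning at an intermediate time and applying Theorem \ref{T1} on the first block shows that the normalized position $S_m/B_m$ converges (on $\{T_g>m\}$) to a Rayleigh-distributed limit, and then a second application of the FCLT on the remaining block, started from that height, yields $\mathbf P(T_g>n\mid T_g>m)\to \mathbf P(\tau^{bm}>1-\cdot)$-type quantities that combine to give asymptotic multiplicativity. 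Slow variation is what makes $U_g(B_n^2)/B_n$ a legitimate regularly-varying-of-index-$(-1/2)$ asymptotic.

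The core asymptotic \eqref{main} itself I would obtain by the four-step universality scheme outlined in the introduction. Step one: using repulsion estimates (which should appear earlier in the paper), show that conditioned on survival the walk reaches a level of order $B_n^{1-\ee}$ by time $o(n)$ with overwhelming probability. Step two: prove $\mathbf E[S_n-g_n;T_g>n]$ is finite and, via recursive/submartingale estimates, that the overshoot contribution is negligible — this is exactly what pins down $U_g$ as a genuine limit rather than a fluctuating quantity. Step three, the methodological heart: replace the strong coupling by a direct appeal to the FCLT. After the walk has reached the high level at some time $m=o(n)$ with normalized height $h$, the remaining path $s_n(\cdot)$ on $[m/n,1]$ is close in distribution to a Brownian motion started from $h/\sqrt{\text{(remaining variance)}}$, and $\mathbf P(\tau^{bm}_h>1)\sim\sqrt{2/\pi}\,h$ for small $h$; integrating against the (now understood) distribution of the height, and using Step two to control the tail of that distribution, produces $\sqrt{2/\pi}\,\mathbf E[S_m-g_m;T_g>m]/B_n$, which by Step one and slow variation equals $\sqrt{2/\pi}\,U_g(B_n^2)/B_n$ asymptotically.

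I expect Step three to be the main obstacle, precisely because we are deliberately avoiding strong coupling. With only the FCLT in hand one gets convergence in distribution but no quantitative rate, so the approximation error in $\mathbf P(\tau^{bm}>\text{remaining time})$ cannot be absorbed unless the starting height $h$ is simultaneously (i) large enough that the Brownian survival probability $\sim\sqrt{2/\pi}\,h$ dominates the FCLT error, and (ii) not so large that its contribution to the expectation is lost. Threading this needle requires the uniform integrability of the rescaled height on $\{T_g>m\}$ — supplied by Step two — together with a careful choice of the intermediate time $m=m_n\to\infty$, $m_n=o(n)$, for which the FCLT on $[m_n/n,1]$ is already accurate. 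The secondary difficulty is making the slow-variation argument fully rigorous when the $\sigma_k^2$ are not bounded below or above; here the Lindeberg condition \eqref{lind.cond} must be used to guarantee $\sigma_k^2=o(B_k^2)$ and hence that $B_n^2$ increases through all large values in a way compatible with the notion of a slowly varying function of $B_n^2$.
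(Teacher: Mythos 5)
Your overall scheme (stop the walk at an intermediate level of order $B_m=o(B_n)$, apply the Markov property there, replace the remaining piece by Brownian motion via the FCLT coupling, and integrate $\sqrt{2/\pi}\,y/B_n$ against the distribution of the stopped height, identifying $U_g(B_n^2)=\mathbf{E}[S_n-g_n;T_g>n]$ through optional stopping) is the same universality route the paper follows (Proposition \ref{CLT-estim} and Lemma \ref{Lmart}). But two of your load-bearing steps have genuine gaps. First, you propose to get the Rayleigh limit of the rescaled position and the asymptotic multiplicativity needed for slow variation from Theorem \ref{T1}; in the paper the logical order is the opposite — Theorem \ref{T1} is proved \emph{using} \eqref{main} — so invoking it here is circular unless you supply an independent proof. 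The paper instead deduces slow variation of $U_g$ purely from the martingale bookkeeping: the difference bound \eqref{M5} combined with the estimates of Proposition \ref{P.upper} gives $\max_{m(n)\le k\le n}|\mathbf{E}Z^*_k-\mathbf{E}Z^*_n|=o(\mathbf{E}Z^*_n)$ with $B_{m(n)}^2/B_n^2\to0$ (Lemma \ref{over4}), and no conditional limit theorem is needed.

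Second, and more seriously, your error control is not quantitative enough. All the error terms (the FCLT error $\rho_n B_n\mathbf{P}(T_g>\nu_m)$, the boundary term $g_n\mathbf{P}(T_g>n)$ needed to pass between the two expressions in \eqref{main+}, and the overshoot term) must be shown to be $o(\mathbf{E}Z^*_n)$, where $\mathbf{E}Z^*_n$ may itself tend to $0$ or to $\infty$. This requires an a priori comparison $B_n\mathbf{P}(T_g>n)\le 3\,\mathbf{E}Z^*_n$, which the paper gets from the stochastic-domination Lemma \ref{Lup1}/\ref{Lup2} together with a CLT--Fatou lower bound on $\mathbf{E}Z_n^+$ (Proposition \ref{P.upper}); nothing in your proposal produces such a bound, and "uniform integrability of the rescaled height" does not substitute for it. Likewise, the overshoot over the level $3B_{m}$ is not handled by UI: the paper bounds it by $\frac{2}{B_m}\sum_{j\le m}\mathbf{E}[X_j^2;|X_j|>B_m]\mathbf{P}(T_g>j-1)$ and then needs the uniform-in-$j$ bound \eqref{oo15}, which itself rests on the already-established slow variation of $U_g$ and on the truncated Lindeberg quantities (Lemma \ref{over5}); this bootstrap — upper bound, near-constancy of $\mathbf{E}Z_k^*$, slow variation, then overshoot — is exactly the part your outline leaves unproved, and it is where the theorem is actually won. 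A minor further point: in the non-identically distributed setting the intermediate index must be chosen through the variance scale ($B_{m(n)}^2\asymp(\rho_n^{2/3}+\lambda_n^{1/2})B_n^2$), not as a time $m_n=o(n)$, and the walk is not shown to reach the high level "with overwhelming probability"; the event of surviving without reaching it is absorbed through the stopped expectation $\mathbf{E}Z^*_{\nu_m}$.
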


Asymptotic formula \eqref{main} generalises \eqref{finite_variance} to all random walks satisfying
the Lindeberg condition and to all boundaries satisfying
\eqref{good} and \eqref{must}. For homogeneous in time random
walks Novikov \cite{Novikov81,Novikov83} and Greenwood and Novikov \cite{GN87} have found conditions on
$g_n$ under which one has a version of \eqref{main} with a positive constant instead of $U_g$.

For  arbitrary  $t\in[B^2_k,B^2_{k+1}]$ we define function $U_g$ in the following natural way
\begin{equation}                                                      \label{trivial}
U_g(t):=U_g(B_k^2)+
\frac{(t-B^2_k)}{\sigma^2_{k+1}}\left(U_g(B_{k+1}^2)-U_g(B_k^2)\right).
\end{equation}
 Note also that \eqref{main} implies trivially that
\begin{equation}
\label{very_trivial}
\log\mathbf{P}(T_g>n)\sim -\log B_n.
\end{equation}
\begin{example}
One of the simplest cases of walks with non-identically distributed increments are weighted random walks.
Let $\{\xi_k\}$ be independent, identically distributed random variables with zero mean and unit
variance. And let $\{a_k\}$ be a sequence of positive numbers. We consider weighted increments $X_k=a_k\xi_k$. If
$$
\frac{
a_n^2}{\sum_{k=1}^n a_k^2}\to0
$$
then the Lindeberg condition is fulfilled and we may apply Theorem \ref{T1} to the walk with weights
$\{a_k\}$. In particular, if $a_n=n^{p+o(1)}$ for some $p>-1/2$ then $B_n^2=n^{2p+1+o(1)}$
and hence, by \eqref{very_trivial},
\begin{equation*}                   
\frac{\log\mathbf{P}(T_g>n)}{\log n}\to -p-\frac{1}{2}.
\end{equation*}
This improves Theorem 1.2 from Aurzada and Baumgarten \cite{AB11}, where the case of $g_n\equiv0$
has been considered under the assumptions $c_1 k^p\leq a_k\leq c_2 k^p$ for all $k$ and
$\mathbf{E} e^{\lambda|\xi_1|}<\infty$ for some $\lambda>0$.

Moreover, if we aditionally assume that $a_n=n^p\ell(n)$, where $\ell$ is a slowly varying function, then
$B_n\sim\frac{n^{p+1/2}\ell(n)}{\sqrt{2p+1}}$
and, consequently,
$$
\mathbf{P}(T_g>n)\sim\frac{L_g(n)}{n^{p+1/2}},
$$
where $L_g$ is slowly varying.

Using \eqref{very_trivial} one can obtain logarithmic asymptotics for
$\mathbf{P}(T_g>n)$ also for faster growing weight sequences. If, for example,
$a_n=\exp\{n^\alpha\ell(n)(1+o(1)\}$ with some $\alpha\in(0,1)$ then
$\log\mathbf{P}(T_g>n)\sim n^\alpha\ell(n)$.
\hfill$\diamond$
\end{example}

\begin{remark}\label{R.bm_g}
It will be clear from the proofs of Theorems~\ref{T1} and \ref{T2} that our approach applies also to the
Brownian motion. If $g$ is a continuous function with $g(0)<0$ and $|g(t)|=o(\sqrt{t})$ then
\begin{equation}
\label{bm_g}
\mathbf{P}(T^{bm}_g>t)\sim\frac{\ell_g(t)}{\sqrt{t}}\quad\text{as }t\to\infty,
\end{equation}
where
$$
T^{bm}_g:=\inf\{t\geq0:W(t)=g(t)\}.
$$
Relation \eqref{bm_g} improves results from Novikov \cite{Novikov81} and Uchiyama \cite{U80}.

Furthermore, the distribution of $\{W(ut)/\sqrt{t};u\in[0,1]\}$ conditioned on $\{T^{bm}_g>t\}$
converges, as $t\to\infty$, weakly on $C[0,1]$ towards the Brownian meander.
\hfill$\diamond$
\end{remark}
\subsection{Asymptotic behaviour of $U_g$}
Theorems \ref{T1} and \ref{T2} state that for any random walk belonging to the domain of attraction of the
Brownian motion and for any boundary sequence $g_n=o(B_n)$,
with necessary condition \eqref{must}, we have universal limiting
behaviour of conditional distributions. In \eqref{main} we also have the universal leading
term: $B_n^{-1}$. And dependence on the boundary $\{g_n\}$ and on the distribution of the
increments $\{X_k\}$ concentrates in the function $U_g$ only. In order to obtain exact asymptotics
for $\mathbf{P}(T_g>n)$ we have to determine the asymptotic behaviour of $U_g$.

Here we want to present conditions  (necessary and/or sufficient) under which
 the function $U_g(t)$
 have finite and/or positive limit as~$t\to\infty$.
Our simplest result is as follows.
\begin{theorem}                                                                                                          \label{T3}
Suppose that  all assumptions  of Theorem \ref{T1} are fulfilled and
\begin{equation}                                                                                                        \label{bndd}
\overline{g}:=\sup_{n}g_n<\infty.
\end{equation}
Then  the expectation $\mathbf{E}[-S_{T_g}]$
and the limit $\lim_{t\to\infty}U_g(t)$ are defined and
\begin{equation}                                                                                                        \label{lim+}
0<U_g(\infty):=\lim_{t\to\infty}U_g(t)=\mathbf{E}[-S_{T_g}]
=\mathbf{E}[\overline{g}-S_{T_g}]-\overline{g}\le\infty.
\end{equation}

In addition,  if for some integer $M$ sequence $\{g_n\}$ is non-increasing for all $n\ge M$ then the function $U_g(t)$ is non-decreasing for $t\ge B_M^2$.
\end{theorem}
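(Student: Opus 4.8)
The plan is to reduce everything to two elementary facts: $(S_n)$ is a mean-zero martingale, so $\mathbf{E}[X_{n+1};T_g>n]=0$ for every $n$ (the event $\{T_g>n\}$ depends only on $X_1,\dots,X_n$); and on $\{T_g<\infty\}$ one has $\overline{g}-S_{T_g}\ge\overline{g}-g_{T_g}\ge0$, which makes $\mathbf{E}[\overline{g}-S_{T_g}]$ a well-defined element of $[0,\infty]$. First, since $B_n\to\infty$ and $U_g$ is slowly varying, Theorem~\ref{T2} gives $\mathbf{P}(T_g>n)\sim\sqrt{2/\pi}\,U_g(B_n^2)/B_n\to0$, hence $T_g<\infty$ a.s.; consequently $S_{T_g}$ is defined, $\mathbf{E}[-S_{T_g}]=\mathbf{E}[\overline{g}-S_{T_g}]-\overline{g}\in[-\overline{g},\infty]$ is well defined (the positive part $S_{T_g}^+\le\overline{g}^+$ being integrable), and the last equality in \eqref{lim+} is linearity of expectation.

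To identify the limit I would use \eqref{main+}, which gives $U_g(B_n^2)\sim\mathbf{E}[-S_{T_g};T_g\le n]$. Write $\mathbf{E}[-S_{T_g};T_g\le n]=\mathbf{E}[\overline{g}-S_{T_g};T_g\le n]-\overline{g}\,\mathbf{P}(T_g\le n)$; as $n\to\infty$ the first term increases to $\mathbf{E}[\overline{g}-S_{T_g}]$ by monotone convergence (the integrand is nonnegative and $\{T_g\le n\}\uparrow\{T_g<\infty\}$ a.s.) and the second tends to $\overline{g}$. Hence $\mathbf{E}[-S_{T_g};T_g\le n]\to\mathbf{E}[-S_{T_g}]$ in $[0,\infty]$, and therefore $U_g(B_n^2)\to\mathbf{E}[-S_{T_g}]$ too. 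Since by \eqref{trivial} $U_g$ is the piecewise-linear interpolant of the values $U_g(B_n^2)$ and $B_n^2\uparrow\infty$, this gives $\lim_{t\to\infty}U_g(t)=\mathbf{E}[-S_{T_g}]$, i.e.\ the middle equality of \eqref{lim+}.

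For strict positivity I would introduce $\widehat U_n:=\mathbf{E}[\overline{g}-S_n;T_g>n]$. Using $\mathbf{E}[X_{n+1};T_g>n]=0$ one gets $\mathbf{E}[\overline{g}-S_{n+1};T_g>n]=\widehat U_n$, hence $\widehat U_{n+1}=\widehat U_n-\mathbf{E}[\overline{g}-S_{n+1};T_g=n+1]$, where the subtracted term is $\ge0$ because $S_{n+1}\le g_{n+1}\le\overline{g}$ on $\{T_g=n+1\}$; thus $(\widehat U_n)$ is non-increasing. On the other hand $\widehat U_n=\overline{g}\,\mathbf{P}(T_g>n)-\mathbf{E}[S_n;T_g>n]=-U_g(B_n^2)+(\overline{g}-g_n)\mathbf{P}(T_g>n)$, and since $\overline{g}-g_n=o(B_n)$ by \eqref{good} while $\mathbf{P}(T_g>n)\sim\sqrt{2/\pi}\,U_g(B_n^2)/B_n$, the correction term is $o(U_g(B_n^2))$; as $U_g(B_n^2)>0$ this forces $\widehat U_n<0$ for all large $n$. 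A non-increasing sequence that becomes negative has a strictly negative limit, and by the preceding paragraph that limit equals $-\lim_{t\to\infty}U_g(t)$; hence $U_g(\infty)>0$ (with nothing to prove when $U_g(\infty)=\infty$). I expect this to be the main obstacle: the increments of $U_g(B_n^2)$ themselves have no fixed sign, and bounding $U_g(B_n^2)$ below by a multiple of $\mathbf{P}(T_g>n)$ is worthless because the latter vanishes; replacing $g_n$ by $\overline{g}$ inside the expectation is precisely what makes the increment sign-definite, after which one imports the asymptotic sign of $\widehat U_n$ from Theorem~\ref{T2}.

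Finally, for the monotonicity statement, the same one-step computation applied directly to $U_g$ yields
\[
U_g(B_{n+1}^2)-U_g(B_n^2)=(g_n-g_{n+1})\mathbf{P}(T_g>n)+\mathbf{E}[g_{n+1}-S_{n+1};T_g=n+1].
\]
For $n\ge M$ the first summand is $\ge0$ since $g_n\ge g_{n+1}$, and the second is $\ge0$ always; hence $U_g(B_n^2)$ is non-decreasing for $n\ge M$, and by the interpolation rule \eqref{trivial} $U_g(t)$ is non-decreasing on $[B_M^2,\infty)$.
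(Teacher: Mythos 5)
Your proof is correct and takes essentially the same route as the paper: the optional-stopping/martingale identity behind \eqref{M1} and \eqref{main+}, monotone convergence applied to the nonnegative variable $\overline{g}-S_{T_g}$, the estimate $(\overline{g}-g_n)\mathbf{P}(T_g>n)=o(\mathbf{E}Z^*_n)$ from \eqref{good} together with the upper bound on $\mathbf{P}(T_g>n)$, and for the final claim the same one-step computation as Remark~\ref{rem2}. Indeed your $\widehat U_n$ equals $-E_n$ in the paper's notation, so your ``non-increasing and eventually negative'' argument for positivity is the mirror image of the paper's ``non-decreasing $E_n$, eventually positive.''
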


Here we use the fact that mathematical expectation of any non-negative random variable is always defined but may be equal to infinity.

In the following two theorems we investigate the case when
\begin{equation}                                                                                                        \label{suplim}
{U}_g(\infty)=\lim_{t\to\infty}U_g(t)<\infty.
\end{equation}
It is worth mentioning that the study of $U_g$ simplifies significantly in the case when
boundary $g_n$ is non-increasing. In order to use this fact we introduce decreasing envelopes
of the sequence $\{g_n\}$:
\begin{equation}                                                                                                        \label{gMonot}
\min_{k\leq n}g_k=:\underline{g}_n\leq g_n\leq \overline{g}_n:=\sup_{k\geq n}g_k\leq\infty, \quad n\geq1.
\end{equation}

\begin{theorem}                                                                                                          \label{T4}
Suppose that conditions \eqref{bndd} and \eqref{suplim}    are fulfilled together with  all assumptions  of Theorem \ref{T1}.
 Then, with necessity,
\begin{equation}                                                                                                  \label{Lind+}
\sum_{n=1}^\infty\frac{1}{B_n}\mathbf{E}[-X_n;-X_n>\varepsilon B_n]<\infty
\quad\text{for each }\varepsilon>0
\end{equation}
and
\begin{equation}                                                                                                  \label{Sum-}
\sum_{n=2}^\infty\frac{\sigma_n^2}{B_n^3}(\overline{g}-\overline{g}_n)<\infty.
\end{equation}
\end{theorem}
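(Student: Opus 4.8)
The plan is to extract the two summability conditions from the hypothesis $U_g(\infty)<\infty$ by exploiting the representation \eqref{main+}, namely $U_g(B_n^2)=\mathbf E[S_n-g_n;T_g>n]$, together with the monotonicity statement in Theorem~\ref{T3}. First I would reduce to a non-increasing boundary: replacing $g_n$ by its upper envelope $\overline g_n$ from \eqref{gMonot} only enlarges $T_g$, and by Theorem~\ref{T3} the associated function $U_{\overline g}(t)$ is non-decreasing; since $\overline g_n\le\overline g<\infty$ and $\overline g_n\ge g_n$, one checks $U_{\overline g}(\infty)<\infty$ follows from $U_g(\infty)<\infty$ (the difference $U_{\overline g}-U_g$ is controlled by $\mathbf E[\overline g_n-g_n;T_g>n]\le(\overline g-\underline g_1)\mathbf P(T_g>n)\to0$ along a suitable comparison, or more carefully by a direct inclusion argument). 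So I may and do assume $g_n=\overline g_n$ is non-increasing.

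Next, with $g_n$ non-increasing and bounded, I would derive a recursion for $u_n:=U_g(B_n^2)=\mathbf E[S_n-g_n;T_g>n]$. Writing $S_n-g_n=(S_{n-1}-g_{n-1})+X_n+(g_{n-1}-g_n)$ and splitting on $\{T_g>n-1\}$, one gets roughly
\begin{equation*}
u_n=u_{n-1}+(g_{n-1}-g_n)\mathbf P(T_g>n-1)-\mathbf E[S_n-g_n;T_g=n],
\end{equation*}
where I used $\mathbf E[X_n;T_g>n-1]=0$ by independence of $X_n$ from $\{T_g>n-1\}$. Since $u_n$ converges, the two non-negative correction terms are each summable; in particular
\begin{equation*}
\sum_{n\ge2}\mathbf E[g_n-S_n;T_g=n]<\infty.
\end{equation*}
On the event $\{T_g=n\}$ one has $S_{n-1}>g_{n-1}\ge g_n$ and $S_n\le g_n$, so $g_n-S_n\ge -X_n-(S_{n-1}-g_{n-1})$, and a lower bound for the overshoot on a further sub-event where $S_{n-1}-g_{n-1}$ is moderate (of order $\le\varepsilon B_n$, say) and $-X_n>\varepsilon B_n$ gives $g_n-S_n\ge -X_n-\varepsilon B_n\ge \tfrac12(-X_n)$ there. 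The probability of $\{T_g>n-1, S_{n-1}-g_{n-1}\le\varepsilon B_n\}$ is, by Theorem~\ref{T1} and \eqref{joint}, comparable to $\mathbf P(T_g>n-1)\asymp B_n^{-1}$ (the meander distribution gives positive mass near $0$); combined with independence of $X_n$, this should produce a lower bound of order $B_n^{-1}\mathbf E[-X_n;-X_n>\varepsilon B_n]$ for the summand, yielding \eqref{Lind+}. For \eqref{Sum-}: from the recursion the first correction term is summable, $\sum_n (g_{n-1}-g_n)\mathbf P(T_g>n-1)<\infty$; using $\mathbf P(T_g>n)\asymp B_n^{-1}$ and Abel summation to convert $\sum (g_{n-1}-g_n)B_{n}^{-1}$ into $\sum (\overline g-g_n)\big(B_n^{-1}-B_{n+1}^{-1}\big)$, together with $B_{n}^{-1}-B_{n+1}^{-1}\asymp \sigma_{n+1}^2 B_n^{-3}$, gives exactly \eqref{Sum-} (recalling $g_n=\overline g_n$, so $\overline g-g_n=\overline g-\overline g_n$).

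The main obstacle I expect is making the comparisons $\mathbf P(T_g>n)\asymp B_n^{-1}$ and $\mathbf P(T_g>n-1,\,S_{n-1}-g_{n-1}\le\varepsilon B_n)\asymp B_n^{-1}$ quantitative and uniform enough: Theorem~\ref{T2} gives $\mathbf P(T_g>n)\sim\sqrt{2/\pi}\,U_g(B_n^2)/B_n$ with $U_g$ slowly varying and, under \eqref{suplim}, bounded above and below by positive constants, so the two-sided bound $c_1 B_n^{-1}\le \mathbf P(T_g>n)\le c_2 B_n^{-1}$ is legitimate; the delicate point is the second estimate, where one needs a lower bound on the conditional probability that the walk is only moderately above the (non-increasing) boundary at time $n-1$, uniformly in $n$. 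This follows from Theorem~\ref{T1} (weak convergence to the meander, whose one-dimensional marginal has a density bounded below near the origin) once one checks the convergence is not merely for each fixed $n$ but can be made uniform via the slow variation of $U_g$ and a standard subsequence/monotonicity argument. A secondary technical nuisance is handling the sub-event bookkeeping so that the lower bound on $g_n-S_n$ is genuinely of order $-X_n$; truncating at level $\varepsilon B_n$ and absorbing the $\varepsilon B_n$ shift into a factor $\tfrac12$ as above handles this cleanly, and letting $\varepsilon$ range over a countable set recovers the stated conditions for every $\varepsilon>0$.
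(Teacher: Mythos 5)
Your quantitative core is the right one and in fact coincides with the paper's: the lower bound $B_n\mathbf{P}(T_g>n)\ge C_5>0$ coming from Theorems~\ref{T2} and \ref{T3} (Lemma~\ref{L+1}), the use of \eqref{joint} to get positive conditional mass on $\{Z_{n-1}<\varepsilon B_{n-1}/2\}$, independence of $X_n$ to peel off $\mathbf{E}[-X_n;-X_n>\varepsilon B_n]$, and Abel summation with $B_{k-1}^{-1}-B_k^{-1}\ge\sigma_k^2/(2B_k^3)$ (Lemmas~\ref{L+2} and \ref{L+3}). But the reduction you start with is a genuine gap. First, the direction is wrong: since $\overline{g}_n\ge g_n$, the envelope boundary is easier to hit, so $T_{\overline{g}}\le T_g$; it does not enlarge $T_g$. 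More seriously, the envelope need not satisfy \eqref{must}: with bounded increments one can have $\sum_{k\le n}\mathrm{essup}\,X_k>g_n$ for every $n$ while $\mathrm{essup}\,S_1<\overline{g}_1$, so that $\mathbf{P}(T_{\overline{g}}>1)=0$. Then Theorems~\ref{T1}--\ref{T3} cannot be invoked for the envelope problem, and the two inputs your argument rests on for the reduced walk --- $\mathbf{P}(T_{\overline{g}}>n)\gtrsim B_n^{-1}$ and the meander marginal \eqref{joint} --- are unavailable (indeed false). Since both your derivation of \eqref{Sum-} (one-signedness of the correction terms in the recursion requires the monotone boundary) and of \eqref{Lind+} (the sub-event lower bound) are carried out for the reduced problem, the proof as written does not establish the theorem.

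The paper never modifies the boundary: it works with the original $T_g$ throughout. For \eqref{Sum-} it uses the pointwise bound $\overline{g}-S_{T_g}\ge\overline{g}-g_{T_g}\ge\overline{g}-\overline{g}_{T_g}$, so the envelope enters only through the deterministic monotone sequence $\overline{g}-\overline{g}_k$; Abel summation together with $\mathbf{P}(T_g>k)\ge C_5/B_k$ then gives the claim --- exactly the role your recursion (a telescoped form of \eqref{M1}) was meant to play, but without any monotonicity of $g$. For \eqref{Lind+} it uses $\overline{g}-S_{T_g}\ge -X_{T_g}-Z_{T_g-1}$ and the event $\{-X_n>\varepsilon B_n,\ Z_{n-1}<\varepsilon B_n/2\}$, where \eqref{good} guarantees $g_{n-1}-g_n<\varepsilon B_n/2$ for large $n$, so this event forces $T_g=n$ and $-X_n-Z_{n-1}>-X_n/2$ (note your bookkeeping ``$-X_n-\varepsilon B_n\ge\tfrac12(-X_n)$'' needs the cutoff $\varepsilon B_n/2$ on $Z_{n-1}$, or the threshold $2\varepsilon B_n$ on $-X_n$); no uniformity in $n$ beyond the single limit \eqref{joint} with $v=\varepsilon/2$ is required, so the ``main obstacle'' you anticipate is not an obstacle. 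Finally, if you drop the envelope reduction, your overshoot summability follows at once from Theorem~\ref{T3} without any recursion: $\sum_n\mathbf{E}[g_n-S_n;T_g=n]=\mathbf{E}[g_{T_g}-S_{T_g}]\le\mathbf{E}[\overline{g}-S_{T_g}]<\infty$.
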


Below, in Example \ref{anti-Lind}, we will show that condition  \eqref{Lind+} does not follow from the assertions of Theorems  \ref{T1} and \ref{T2}.
\begin{theorem}                                                                                                          \label{T5}
Suppose that all assumptions  of Theorem \ref{T1} are satisfied  and
\begin{equation}                                                                                                        \label{Sum+}
\sum_{n=2}^\infty\frac{\sigma_n^2}{B_n^3}(\underline{g}_1-\underline{g}_n)<\infty.
\end{equation}
Assume in addition that there exists a non-decreasing sequence $\{h_n>0\}$ of positive numbers such that
\begin{equation}                                                                                             \label{hLind}
\sum_{n=1}^\infty\frac{1}{B_n}\mathbf{E}[-X_n;-X_n>h_{n}+g_{n-1}-\uu_{n}]<\infty
\end{equation}
and
\begin{equation}                                                                                        \label{hSum}
\sum_{n=1}^\infty\frac{\sigma_n^2}{B_n^3}h_n<\infty.
\end{equation}
Then  the expectation $\mathbf{E}|S_{T_g}|<\infty$
and the limit $\lim_{t\to\infty}U_g(t)$ exist and
\begin{equation}                                                                                                        \label{lim}
0\le U_g(\infty):=\lim_{t\to\infty}U_g(t)=\mathbf{E}[-S_{T_g}]<\infty.
\end{equation}
\end{theorem}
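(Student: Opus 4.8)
The plan is to establish \eqref{lim} by showing that, under the extra hypotheses \eqref{Sum+}, \eqref{hLind} and \eqref{hSum}, the family of random variables $\{S_n-g_n;T_g>n\}$ is uniformly integrable, so that the limit identity $U_g(\infty)=\mathbf E[-S_{T_g}]$ follows from \eqref{main+} by passing to the limit, and the finiteness $\mathbf E|S_{T_g}|<\infty$ follows because on $\{T_g\le n\}$ one has $-S_{T_g}\ge -g_{T_g}\ge -\underline g_1$ bounded below while $\mathbf E[-S_{T_g};T_g\le n]$ is nondecreasing in $n$ and bounded by $U_g(\infty)$ via \eqref{main+}.

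The first step is to reduce to a non-increasing boundary. Replace $g_n$ by its decreasing envelope $\underline g_n=\min_{k\le n}g_k$ from \eqref{gMonot}; then $T_{\underline g}\le T_g$ and the difference $g_n-\underline g_n$ is controlled in an $\ell^1$-sense weighted by $\sigma_n^2/B_n^3$ precisely by \eqref{Sum+}. A coupling/comparison argument (of the type already used to prove Theorems~\ref{T1} and \ref{T2}, where the boundary only enters through $U_g$ and $B_n$) shows that the asymptotics \eqref{main} for $g$ and for $\underline g$ differ only by a bounded factor, and more importantly that the overshoot expectation $\mathbf E[-S_{T_g};T_g\le n]$ is sandwiched between quantities built from $\underline g$. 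So it suffices to bound the overshoot when the boundary is non-increasing, which is the setting in which Theorem~\ref{T3} already gives monotonicity of $U_g$.

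The heart of the argument is then the overshoot control for a non-increasing boundary. Write $-S_{T_g}=(g_{T_g}-S_{T_g})+(-g_{T_g})$. Since $g$ is non-increasing and bounded (by \eqref{bndd}, $\underline g_1\le g_n\le\overline g$ gives boundedness on the relevant side), the term $-g_{T_g}$ is bounded below and contributes a finite amount once $\mathbf P(T_g<\infty)$-integrability of the rest is shown. For the genuine overshoot $g_{T_g}-S_{T_g}\ge0$ we decompose according to the size of the jump $-X_{T_g}$ that causes the crossing: a crossing at step $n$ from a position $S_{n-1}>g_{n-1}$ forces $-X_n> S_{n-1}-g_n\ge g_{n-1}-g_n$, but a \emph{large} overshoot of order exceeding $h_n$ forces $-X_n>h_n+g_{n-1}-S_{n-1}$, and on $\{T_g>n-1\}$ one may replace $S_{n-1}$ by its lower envelope-type bound to get $-X_n>h_n+g_{n-1}-\uu_n$ up to harmless terms. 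Summing $\mathbf E[g_{T_g}-S_{T_g};T_g=n,\ \text{overshoot}>h_n]$ over $n$ and using the independence of $X_n$ from $\{T_g>n-1\}$, together with the elementary bound $\mathbf P(T_g>n-1)=O(1/B_{n-1})$ from \eqref{main}, reduces this tail to $\sum_n B_n^{-1}\mathbf E[-X_n;-X_n>h_n+g_{n-1}-\uu_n]$, finite by \eqref{hLind}. The complementary part, where the overshoot is at most $h_n$, contributes at most $\sum_n h_n\,\mathbf P(T_g=n)$; resumming by parts against $\mathbf P(T_g>n)\sim cB_n^{-1}$ and using $\mathbf P(T_g=n)\le\mathbf P(T_g>n-1)\mathbf P(X_n<g_n-S_{n-1}\mid\mathcal F_{n-1})$ bounds this by a constant times $\sum_n (\sigma_n^2/B_n^3)h_n$, finite by \eqref{hSum}. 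Combining the two pieces yields $\sup_n\mathbf E[-S_{T_g};T_g\le n]<\infty$, hence uniform integrability and \eqref{lim}.

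The main obstacle I expect is the second, ``small overshoot'' estimate: translating $\sum_n h_n\mathbf P(T_g=n)$ into something summable requires a good enough bound on $\mathbf P(T_g=n)$, and the crude bound $\mathbf P(T_g=n)\le\mathbf P(T_g>n-1)$ is too weak (it only gives $\sum h_n/B_n$, not $\sum h_n\sigma_n^2/B_n^3$). One needs the sharper local estimate $\mathbf P(T_g=n)\le C\sigma_n^2 B_n^{-3}$, or rather an integrated version, which should come from combining $\mathbf P(T_g>n-1)=O(B_{n-1}^{-1})$ with the meander-type concentration of $S_{n-1}-g_{n-1}$ on a scale $B_{n-1}$ (so that the conditional probability of a downcrossing at step $n$ is $O(\sigma_n/B_{n-1})$ on average) — this is exactly where Theorem~\ref{T1} is used quantitatively, and making that quantitative requires a small-ball/Berry–Esseen-type refinement of the meander convergence rather than just the weak limit. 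Getting this bound with the correct power of $B_n$ and summing by parts cleanly is the delicate point; everything else is bookkeeping with the $\ell^1$ conditions \eqref{Sum+}, \eqref{hLind}, \eqref{hSum}.
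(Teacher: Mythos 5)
There is a genuine gap, and it sits exactly where your argument relies on the tail bound $\mathbf{P}(T_g>n-1)=O(1/B_{n-1})$ (and later on $\mathbf{P}(T_g>n)\sim cB_n^{-1}$). By Theorem~\ref{T2}, such a bound is \emph{equivalent} to the boundedness of $U_g(B_n^2)=\mathbf{E}Z_n^*$, i.e.\ essentially to the conclusion you are trying to prove; the only bound available a priori is $\mathbf{P}(T_g>n)\le 3\,\mathbf{E}Z_n^*/B_n$ from \eqref{Up2}, with $\mathbf{E}Z_n^*$ possibly unbounded. So your uniform-integrability scheme is circular as stated. The paper closes this loop with a bootstrap: it sets $\overline{E}_n:=\max_{N_6\le k\le n}\mathbf{E}Z_k^*$, chooses $N_6$ so large that the tail sums built from \eqref{hLind}, \eqref{hSum} and \eqref{Sum+} are at most $1/8$, bounds both overshoot contributions by $4\overline{E}_n\cdot(1/8)=\overline{E}_n/2$ (using only $\mathbf{P}(T_g>k)\le 3\overline{E}_n/B_k$), and then feeds this back through \eqref{Up3} ($\tfrac34\mathbf{E}Z_m^*\le\mathbf{E}[-S_{T_g};T_g\le m]$) to get $\tfrac34\overline{E}_n\le C_6-\underline{g}_1+\overline{E}_n/2$, hence a uniform bound on $\overline{E}_n$; only after that do monotone convergence and \eqref{main+} give \eqref{lim}. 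Without some absorption step of this kind your argument does not get off the ground.

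Relatedly, the obstacle you flag as the delicate point — a local estimate $\mathbf{P}(T_g=n)\lesssim\sigma_n^2/B_n^3$ via a Berry--Esseen-type refinement of the meander convergence — is not needed at all, and the paper never proves one. The hypothesis that $\{h_n\}$ is non-decreasing (so that $M_n:=h_n+\underline{g}_1-\underline{g}_n$ is non-decreasing) is there precisely so that $\sum_k M_k\,\mathbf{P}(T_g=k)$ can be handled by summing by parts twice: first against $\mathbf{P}(T_g>k)\le 3\overline{E}_n/B_k$, then back, using $1/B_{k-1}-1/B_k\le \tfrac43\,\sigma_k^2/B_k^3$ (via \eqref{Up1+}), which lands exactly on $\sum_k M_k\sigma_k^2/B_k^3<\infty$ from \eqref{Sum+} and \eqref{hSum}. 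So the step you leave open dissolves once the argument is organized this way. Two further remarks: the preliminary reduction to the monotone envelope boundary is unnecessary (the paper works with $g$ directly, the envelope $\underline{g}_n$ entering only through $H_n=h_n+g_{n-1}-\underline{g}_n$ and $M_n$), and the comparison you invoke is stated backwards — since $\underline{g}_n\le g_n$ one has $T_{\underline{g}}\ge T_g$, and it is not clear how to sandwich the overshoot for $g$ between quantities for $\underline{g}$; finally, Theorem~\ref{T5} does not assume \eqref{bndd}, so appeals to boundedness of $g_n$ from above are not available here.
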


Note that, for all $n\ge1$,
\begin{equation*}
\mathbf{E}[-X_n;-X_n>h_{n}+g_{n-1}-\uu_{n}]\le \mathbf{E}[-X_n;-X_n>h_{n}].
\end{equation*}
Remark, that if for some integer $M$ sequence $\{g_n\}$ is non-increasing for all $n>M$
then conditions \eqref{Sum-} and \eqref{Sum+} are equivalent. Note also that if
$g_n=O({B_n}/{\log^{1+\gamma}B_n})$, for some $\gamma>0$, then \eqref{Sum-} and
\eqref{Sum+} take place, and if $h_n=O({B_n}/{\log^{1+\gamma}B_n})$ then \eqref{hSum}
is fulfilled. Thus we have proved
\begin{corollary}                                                                                            \label{Cor-gamma}
Suppose that condition \eqref{Sum+} together with all assumptions of Theorem \ref{T1} hold and in addition
\begin{equation}                                                                                                            \label{Lind+gamma}
\sum_{k=1}^\infty\frac1{B_k}
\mathbf{E}\left[-X_{k+1};-X_{k+1}>\frac{B_k}{\log^{1+\gamma}B^2_k}\right]<\infty
\end{equation}
for some $\gamma>0$ and some $C>0$. Then $\mathbf{E}|S_{T_g}|<\infty$ and \eqref{lim} is true.
\end{corollary}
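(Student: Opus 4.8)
The plan is to derive the corollary directly from Theorem~\ref{T5}: all of its hypotheses except the existence of the auxiliary sequence $\{h_n\}$ are assumed outright in the corollary, so it suffices to exhibit a non-decreasing sequence $\{h_n>0\}$ for which \eqref{hLind} and \eqref{hSum} hold, after which Theorem~\ref{T5} yields at once that $\mathbf E|S_{T_g}|<\infty$ and that \eqref{lim} holds.

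I would first record two elementary facts. (i) Under the standing assumptions $B_n^2\to\infty$: if $B_n^2\uparrow B_\infty^2<\infty$, then \eqref{lind.cond} would force $\mathbf E[X_k^2;|X_k|>\ee B_\infty]=0$ for every $k$ and every $\ee>0$, i.e.\ $X_k\equiv0$, contradicting $\sigma_k^2>0$. (ii) A one-line derivative computation shows that $\psi(x):=x\,(\log x^2)^{-(1+\gamma)}$ is well defined and strictly increasing on $[x_0,\infty)$ with $x_0:=e^{1+\gamma}$, and $\psi(x)\to\infty$. With this I set
\[
h_n:=\psi\bigl(\max\{B_{n-1},x_0\}\bigr),\qquad n\ge1 .
\]
Since $n\mapsto\max\{B_{n-1},x_0\}$ is non-decreasing and $\psi$ is increasing on $[x_0,\infty)$, the sequence $\{h_n\}$ is non-decreasing, and $h_n\ge\psi(x_0)>0$; thus it is an admissible candidate for Theorem~\ref{T5}.

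To check \eqref{hLind}, I would use $\underline{g}_n=\min_{k\le n}g_k\le g_{n-1}$, whence $h_n+g_{n-1}-\underline{g}_n\ge h_n>0$, together with $B_{n-1}\le B_n$, to obtain
\[
\frac1{B_n}\mathbf E\bigl[-X_n;\,-X_n>h_n+g_{n-1}-\underline{g}_n\bigr]\le\frac1{B_{n-1}}\mathbf E\bigl[-X_n;\,-X_n>h_n\bigr].
\]
For all but finitely many $n$ one has $B_{n-1}\ge x_0$, so $h_n=B_{n-1}(\log B_{n-1}^2)^{-(1+\gamma)}$, and after the substitution $k=n-1$ the tail of the right-hand series is exactly the series in \eqref{Lind+gamma} (the constant $C$ only lowers the truncation level and so cannot spoil convergence); the finitely many remaining terms are each bounded by $\mathbf E|X_n|<\infty$. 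Hence \eqref{hLind} holds. For \eqref{hSum}, monotonicity of $\psi$ gives $h_n=\psi(B_{n-1})\le\psi(B_n)=B_n(\log B_n^2)^{-(1+\gamma)}$ for all large $n$, so, since $x\mapsto\bigl(x(\log x)^{1+\gamma}\bigr)^{-1}$ is decreasing,
\[
\frac{\sigma_n^2h_n}{B_n^3}\le\frac{B_n^2-B_{n-1}^2}{B_n^2(\log B_n^2)^{1+\gamma}}\le\int_{B_{n-1}^2}^{B_n^2}\frac{dx}{x(\log x)^{1+\gamma}} ,
\]
and, summing over $n$ (the intervals $[B_{n-1}^2,B_n^2]$ are disjoint and $B_n^2\uparrow\infty$), the tail is at most $\int_c^\infty\frac{dx}{x(\log x)^{1+\gamma}}=\gamma^{-1}(\log c)^{-\gamma}<\infty$. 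This verifies \eqref{hSum}, and Theorem~\ref{T5} completes the proof.

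Since the whole argument reduces to quoting Theorem~\ref{T5}, there is no substantial obstacle; the only mildly delicate point is producing a genuinely non-decreasing $\{h_n\}$, because $n\mapsto B_n(\log B_n^2)^{-(1+\gamma)}$ need not be monotone for small $n$ — which is precisely why one truncates $B_{n-1}$ at $x_0$ and uses that $\psi$ is increasing beyond $x_0$. Everything else is a routine series–integral comparison.
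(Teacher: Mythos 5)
Your proposal is correct and follows essentially the same route as the paper: the corollary is obtained by feeding Theorem~\ref{T5} a sequence $h_n\asymp B_{n-1}/\log^{1+\gamma}B_{n-1}^2$, using the observation $\mathbf{E}[-X_n;-X_n>h_n+g_{n-1}-\underline{g}_n]\le\mathbf{E}[-X_n;-X_n>h_n]$ so that \eqref{hLind} reduces to \eqref{Lind+gamma}, and checking \eqref{hSum} by a series--integral comparison. Your only additions are the explicit truncation at $x_0=e^{1+\gamma}$ to guarantee monotonicity of $\{h_n\}$ and the written-out integral test, details the paper leaves implicit in the remark preceding the corollary.
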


\begin{remark} In the case $g_n\equiv-x$ some estimates for the overshoot can be obtained from
Arak~\cite{Arak74}. First, combining Lemma 1.6 from that paper with our Theorem~\ref{T2}, one
can easily get
$$
\mathbf{E}[-S_{\tau_x}]\leq C\sum_{k=1}^\infty\frac{U_g(B_k)}{B_k^3}\mathbf{E}|X_k|^3.
$$
Then, recalling that $U_g$ is slowly varying, we  conclude that the condition
$$
\sum_{k=1}^\infty\frac{\mathbf{E}|X_k|^3}{B_k^{3-\gamma}}<\infty\quad\text{for some }\gamma>0
$$
is sufficient for the finiteness of $\mathbf{E}[-S_{\tau_x}]$. Second, according to Lemma~1.7
in \cite{Arak74},
$$
B_n\mathbf{P}(\tau_x>n)\leq C\left(x+\max_{k\leq n}\frac{\mathbf{E}|X_k|^3}{\mathbf{E}X_k^2}\right).
$$
Letting $n\to\infty$ and combining \eqref{main} with \eqref{lim+}, we obtain
$$
\mathbf{E}[-S_{\tau_x}]\leq C\left(x+\sup_{k\geq 1}\frac{\mathbf{E}|X_k|^3}{\mathbf{E}X_k^2}\right).
$$
All these estimates contain third absolute moments of the increments, since the main purpose of
\cite{Arak74} is to derive a Berry-Esseen type inequality for the maximum of partial sums.
\hfill$\diamond$
\end{remark}

Now we consider several particular cases in Theorems~\ref{T4} and \ref{T5}.

\begin{example} \label{anti-Lind}
Let $X_n$ be a symmetric random variable with four values:
$$
\mathbf{P}(X_n=\pm\sqrt{n})=\frac{p_n}{2},\quad \mathbf{P}(X_n=\pm a_n)=\frac{1-p_n}{2},
$$
where
$$
p_n:=\frac{1}{n\log(2+n)}\quad\text{and}\quad a_n:=\sqrt{\frac{1-np_n}{1-p_n}}.
$$
Clearly, $\mathbf{E}X_n=0$ and $\mathbf{E}X_n^2=1$. Therefore, $B_n=\sqrt{n}$ for this sequence
of random variables.

Let us first show that this sequence satisfies the Lindeberg condition.
Fix some $\varepsilon\in(0,1)$ and note that $a_n<1$ for each $n\geq1$. Then, for every
$n>\varepsilon^{-2}$,
\begin{align*}
L^2_n(\varepsilon)=\frac{1}{n}\sum_{k=1}^n\mathbf{E}[X_k^2;|X_k|>\varepsilon\sqrt{n}]
=\frac{1}{n}\sum_{k\in(\varepsilon^2n,n]}kp_k=O(\log^{-1}n).
\end{align*}

In order to see that \eqref{Lind+} does not hold here, we choose $\varepsilon=1/2$. Then
$$
\sum_{k=2}^\infty\frac{1}{B_k}\mathbf{E}[-X_k;-X_k>B_k/2]
=\sum_{k=2}^\infty\frac{1}{\sqrt{k}}\sqrt{k}p_k=\sum_{k=2}^\infty\frac{1}{k\log(2+k)}=\infty.
$$
Applying now Theorem \ref{T4}, we conclude that $\mathbf{E}[-S_{T_g}]=\infty$ and, consequently,
$$
\sqrt{n}\mathbf{P}(T_g>n)\to\infty
$$
by Theorem \ref{T2} for any boundary $g_n=o(\sqrt{n})$ with $\overline{g}<\infty$.
\hfill$\diamond$
\end{example}
This example shows that assumptions of Theorem \ref{T1} are not sufficient for condition \eqref{Lind+} to hold.

\begin{example}
Let $\{\xi_k\}$ be a sequence of independent, identically distributed random variables with
the probability density function
$$
f(x)=|x|^{-3}\mathbb{I}\{|x|\geq1\}.
$$
This sequence is still in the domain of attraction of the standard normal distribution, but not
in the normal domain of attraction. Due to the symmetry of the distribution of these variables,
the probability $\mathbf{P}(\tau_0>n)=\mathbf{P}(T_0>n)$ that the corresponding random walk stays positive up to time $n$ is
asymptotically equivalent to $c/\sqrt{n}$
(see, for example, \cite[Chapter XII.7, Theorem 1a]{F71}).

Let us consider different truncations of these
increments. For every $n\geq1$ define
$$
X_n:=\xi_n\mathbb{I}\{|\xi_n|\leq \sqrt{n}\log^p(n+2)\},\quad p\in\mathbb{R}.
$$
Clearly, $B_n^2\sim n\log n$ as $n\to\infty$. Furthermore, it is not hard to see that the
Lindeberg condition holds for every $p<-1/2$. Note also that $\sqrt{n\log n}$ is also the
norming sequence for the random walk with increments $\{\xi_k\}$. In other words, we have the same
type of convergence towards Brownian motion for all random walks considered in this example.

If we take $p<-1/2$ then $\mathbf{P}\left(-X_n>B_n/\log^{2+2\gamma}B_n\right)=0$ for all sufficiently
large values of $n$ with any  $\gamma\in(0,-p-1/2)$. Therefore,
\eqref{Lind+gamma} holds and, consequently, $\mathbf{P}(\tau_x>n)\sim c/\sqrt{n\log n}$.
This means that the truncation has changed the tail of~$T_g$.

But if we choose $p>1/2$, then $\mathbf{E}[-X_n;-X_n>B_n]\sim B_n^{-1}$. Recalling that
$B_n\sim\sqrt{n\log n}$, we conclude that the series in \eqref{Lind+} is infinite. This implies
that $\mathbf{P}(T_g>n)\gg 1/\sqrt{n\log n}$.
\hfill$\diamond$
\end{example}

Comparing \eqref{Lind+gamma} and \eqref{Lind+}, we see that the difference consists only in logarithmic
correction terms. In order to study the influence of these corrections, we consider again weighted
random walks.

\begin{corollary}                                                                 \label{weighted}
Let $\{X_k=a_k\xi_k\}$ where $\{a_k\}$ is a sequence of positive numbers
and $\{\xi_k\}$ are independent, identically distributed random variables with zero mean and unit
variance.
Suppose that for $\gamma=-1$ and for some $\gamma>0$ the following condition holds
\begin{equation}                                                                          \label{w-1}
\sum_{k=1}^\infty \frac{a_k}{B_k} \mathbb{I}\left\{
x>\frac{B_k}{a_k\log^{1+\gamma}B_k}\right\}\sim f_\gamma(x)\to\infty
\qquad\text{as }\quad x\to\infty
\end{equation}
with some functions
$f_\gamma$.
Then  \eqref{Lind+gamma} is equivalent to the  assumption
\begin{equation}                                                                          \label{w-2}
\mathbf{E}[(-\xi_1)f_\gamma(-\xi_1);\xi_1<0]<\infty
\end{equation}
whereas \eqref{Lind+} is equivalent to
 $\mathbf{E}[(-\xi_1)f_{-1}(-\xi_1);\xi_1<0]<\infty$.
\end{corollary}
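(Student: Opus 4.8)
The plan is to reduce both equivalences to the asymptotic relation~\eqref{w-1} by a single device. Write $X_k=a_k\xi_k$, so that $\sigma_k^2=a_k^2$, $B_n^2=\sum_{k=1}^na_k^2$, and note that, since $a_k>0$ and $\{\xi_k\}$ are i.i.d.\ copies of $\xi_1$, for every $\theta\ge0$
$$
\mathbf{E}[-X_k;-X_k>\theta]=a_k\,\mathbf{E}\bigl[(-\xi_1)\mathbb{I}\{-\xi_1>\theta/a_k\};\xi_1<0\bigr].
$$
Substituting this into the series of~\eqref{Lind+} and of~\eqref{Lind+gamma} and interchanging the (nonnegative) summation with the expectation by Tonelli's theorem yields
\begin{align*}
\sum_{k=1}^\infty\frac1{B_k}\mathbf{E}[-X_k;-X_k>\varepsilon B_k]
&=\mathbf{E}\bigl[(-\xi_1)\,\Phi_{-1}^{(\varepsilon)}(-\xi_1);\xi_1<0\bigr],\\
\sum_{k=1}^\infty\frac1{B_k}\mathbf{E}\bigl[-X_{k+1};-X_{k+1}>\tfrac{B_k}{\log^{1+\gamma}B_k^2}\bigr]
&=\mathbf{E}\bigl[(-\xi_1)\,\Phi_\gamma(-\xi_1);\xi_1<0\bigr],
\end{align*}
where
$$
\Phi_{-1}^{(\varepsilon)}(x):=\sum_{k=1}^\infty\frac{a_k}{B_k}\mathbb{I}\Bigl\{x>\frac{\varepsilon B_k}{a_k}\Bigr\},
\qquad
\Phi_\gamma(x):=\sum_{k=1}^\infty\frac{a_{k+1}}{B_k}\mathbb{I}\Bigl\{x>\frac{B_k}{a_{k+1}\log^{1+\gamma}B_k^2}\Bigr\}.
$$
Thus both conditions have the form $\mathbf{E}[(-\xi_1)\Phi(-\xi_1);\xi_1<0]<\infty$ with $\Phi$ a nondecreasing step function tending to $\infty$, and it remains to identify $\Phi$ with $f_\gamma$ up to equivalence of such integrability conditions.

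This identification is the substance of the proof. For $\Phi_\gamma$ I would reindex $j=k+1$; the Lindeberg condition forces $\max_{k\le n}\sigma_k^2/B_n^2\to0$, hence $a_j^2/B_j^2\to0$, $B_{j-1}/B_j\to1$, and (since $B_n\to\infty$) $\log B_{j-1}\sim\log B_j$, so that, writing $\beta_j:=B_j/(a_j\log^{1+\gamma}B_j)$, one obtains constants $0<c_1\le c_2<\infty$ and an index $J$ with
$$
c_1\,\frac{a_j}{B_j}\mathbb{I}\{x>c_2\beta_j\}\ \le\ \frac{a_j}{B_{j-1}}\mathbb{I}\Bigl\{x>\frac{B_{j-1}}{a_j\log^{1+\gamma}B_{j-1}^2}\Bigr\}\ \le\ c_2\,\frac{a_j}{B_j}\mathbb{I}\{x>c_1\beta_j\}\qquad(j\ge J).
$$
Summing over $j$, discarding the finitely many terms $j<J$ (harmless because $\mathbf{E}|\xi_1|<\infty$), and using~\eqref{w-1} in the form $\sum_j\tfrac{a_j}{B_j}\mathbb{I}\{x>c\beta_j\}=\sum_j\tfrac{a_j}{B_j}\mathbb{I}\{x/c>\beta_j\}\sim f_\gamma(x/c)$ for each fixed $c>0$, one gets $\Phi_\gamma(x)\asymp f_\gamma(x)$ as $x\to\infty$; here the last step also uses $f_\gamma(x/c)\asymp f_\gamma(x)$, a mild regularity of $f_\gamma$ built into hypothesis~\eqref{w-1} that holds automatically for all the weight sequences of interest (e.g.\ $a_k=k^p\ell(k)$, for which $f_\gamma(x)\asymp x\log^{1+\gamma}x$). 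Consequently $\mathbf{E}[(-\xi_1)\Phi_\gamma(-\xi_1);\xi_1<0]<\infty$ is equivalent to $\mathbf{E}[(-\xi_1)f_\gamma(-\xi_1);\xi_1<0]<\infty$, i.e.\ to~\eqref{w-2}; together with the second display above this shows that~\eqref{Lind+gamma} holds if and only if~\eqref{w-2} does. The same argument applied to $\Phi_{-1}^{(\varepsilon)}$ (where there is neither an index shift nor a logarithm, only the rescaling $x\mapsto x/\varepsilon$) gives $\Phi_{-1}^{(\varepsilon)}(x)\asymp f_{-1}(x)$ for every fixed $\varepsilon>0$; since the resulting condition $\mathbf{E}[(-\xi_1)f_{-1}(-\xi_1);\xi_1<0]<\infty$ no longer involves $\varepsilon$, the quantifier ``for each $\varepsilon>0$'' in~\eqref{Lind+} collapses and~\eqref{Lind+} is equivalent to it.

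The main obstacle is precisely this comparison step: one must absorb the index shift $k+1\mapsto k$, the cosmetic factor arising from $\log B_k^2$ versus $\log B_k$, and (for~\eqref{Lind+}) the free parameter $\varepsilon$, into harmless multiplicative constants standing in front of and inside the indicators, and then check that rescaling the argument of the counting sum by a bounded factor does not affect finiteness of the weighted expectation. Everything else — the substitution $X_k=a_k\xi_k$, the Tonelli interchange, and the bookkeeping of the finitely many initial terms — is routine.
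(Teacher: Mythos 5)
Your proposal is correct and follows essentially the same route as the paper: substitute $X_k=a_k\xi_k$, interchange sum and expectation by Tonelli/Fubini, and identify the resulting counting function with $f_\gamma$ via \eqref{w-1}, so that \eqref{Lind+gamma} becomes \eqref{w-2} and \eqref{Lind+} becomes $\mathbf{E}[(-\xi_1)f_{-1}(-\xi_1);\xi_1<0]<\infty$. The only difference is that you make explicit the bookkeeping the paper absorbs silently (the index shift $k+1\mapsto k$, $\log B_k^2$ versus $\log B_k$, and the $\varepsilon$-quantifier in \eqref{Lind+}), at the price of invoking the mild regularity $f_\gamma(x/c)\asymp f_\gamma(x)$, which is not literally contained in \eqref{w-1} but is implicitly used by the paper's two-line argument as well and holds for the weight sequences treated in its examples.
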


Indeed, for positive weights $\{a_n\}$ condition \eqref{Lind+gamma} reduces to
$$
\sum_{k=1}^\infty\frac{a_k}{B_k}
\mathbf{E}\left[-\xi_1,-\xi_1>\frac{B_k}{a_k\log^{1+\gamma}B_k}\right]<\infty.
$$
Then, applying the Fubini theorem, we infer that  the last condition is
equivalent to \eqref{w-2}. Similar calculations with $\gamma=-1$ imply that
\eqref{Lind+} is equivalent to $\mathbf{E}[(-\xi_1)f_{-1}(-\xi_1);\xi_1<0]<\infty$.

\begin{example} First, consider the case when $a_k=k^p$ with some $p>0$.
It is easy to see that
$$
B_n^2=\sum_{k=1}^n k^{2p}\sim
n^{2p+1}/(2p+1)\sim na_n/(2p+1)
$$
and that we may take $f_\gamma(x)=c(p)x\log^{1+\gamma}x$.
From this relation we infer that \eqref{Lind+gamma} reduces to
$$
\mathbf{E}[\xi_1^2\log^{1+\gamma}(-\xi_1);\xi_1<0]<\infty
$$
whereas \eqref{Lind+} is equivalent to $\mathbf{E}[\xi_1^2;\xi_1<0]<\infty$.
Therefore, in the case of regularly varying weights we have to assume slightly more than the finiteness
of the second moment.
\hfill$\diamond$
\end{example}

\begin{example}
The situation becomes very different in the case of Weibullian weights.
Indeed, assume that $a_k=\exp\{k^\alpha\}$, where $0<\alpha<1.$
Then, using the L'Hospital rule, we get
\begin{align*}
B_n^2 = \sum_{k=1}^n e^{2k^\alpha}
\sim \int_0^n e^{2x^\alpha}dx\sim \frac1{2\alpha} n^{1-\alpha} e^{2n^\alpha}
= \frac1{2\alpha} n^{1-\alpha} a_n^2.
\end{align*}
Hence, the sum in \eqref{w-1} is asymptotically equivalent to
\begin{align*}
  f_\gamma(x)&\sim  \frac1{\sqrt{2\alpha}}\sum_{k=1}^\infty \frac{1}{k^{(1-\alpha)/2}}
  \mathbb{I}\left\{x>\frac{k^{(1-\alpha)/2}}{\log^{1+\gamma} ( k^{1-\alpha} e^{k^\alpha} /{\sqrt{2\alpha}} )}\right\}\\
  &\sim \frac1{\sqrt{2\alpha}}\sum_{k=1}^\infty \frac{1}{k^{(1-\alpha)/2}}
  \mathbb{I}\left\{x>k^{\beta(\alpha, \gamma)}\right\},
\quad \beta(\alpha, \gamma)=(1-3\alpha-2\alpha \gamma)/2.
\end{align*}
It is not difficult to see that $\beta(\alpha, \gamma)<0$ and $f_\gamma(x) =\infty$ when $\alpha\ge 1/3$ and $ \gamma>0$.
Hence, condition \eqref{w-2} never holds in this  case.

On the other hand, if $\beta(\alpha, \gamma)>0$ then
$$
 f_\gamma(x) \sim \frac1{\sqrt{2\alpha}}\int_{0}^{x^{1/\beta(\alpha, \gamma)}} \frac{1}{t^{(1-\alpha)/2}}dt
 = \frac1{\sqrt{2\alpha}}\frac2{1+\alpha} x^{\frac{1+\alpha}{2\beta(\alpha, \gamma)}}.
$$
Thus, for $\alpha<1/3$ and sufficiently small $\gamma>0$ condition \eqref{w-2} becomes
$$
\mathbf{E}[(-\xi_1)^{1+\frac{1+\alpha}{2\beta(\alpha, \gamma)}};\xi_1<0]
=\mathbf{E}[(-\xi_1)^{1+\frac{1+\alpha}{1-3\alpha-2\alpha \gamma}};\xi_1<0]
<\infty.
$$
For $\gamma=-1$ note that the necessary condition \eqref{Lind+} reduces to
$$\mathbf{E}[(-\xi_1)^{1+\frac{1+\alpha}{1-\alpha}};\xi_1<0]<\infty.$$
So, we see that  condition \eqref{w-2}  and equivalent condition \eqref{Lind+gamma} are much more restrictive
in the case of Weibullian weights.
\hfill$\diamond$
\end{example}
\section{Proof of Theorems~\ref{T1} and \ref{T2}}
\label{Sect:proof_of_main_result}
Throughout the  remaining part of the paper we will assume that conditions of Theorem \ref{T1} hold everywhere
except Lemma \ref{Lup1} and \ref{Lup2}.
\subsection{Estimates in a boundary problem} 
The main purpose of this subsection is to derive appropriate estimates for $\mathbf{P}(T_g>n)$ using ideas from the FCLT. Define
\begin{equation}\label{A1}
Z_k:=S_k-g_k \quad\text{and}\quad Z^*_k=Z_k\mathbb{I}\{T_g>k\},\ k\geq1.
\end{equation}
For every $h>0$ and each $m\geq 1$ consider the stopping times
\begin{equation}
\label{A2}
\nu(h):=\inf \{k\ge1:Z_k>h\}\quad\text{and}\quad \nu_m:=\min\{\nu(B_m),m\}.
\end{equation}

To state the main result of this paragraph we introduce the notation
\begin{gather}
\label{A3}
G_n:=\max_{k\le n}|g_k|
\quad\text{and}\quad
\rho_n:=3\pi_n+2\frac{G_n}{B_n},
\end{gather}
where $\pi_n$ denotes the classical Prokhorov distance  (see Lemma \ref{L.pi} below for details) between the distributions on $C[0,1]$ of the
Brownian motion and the process $s_n(t)$
 defined in \eqref{T4.0.scaled}.

\begin{proposition}\label{CLT-estim}
Let integers $m,n$ satisfy
\begin{equation}                                                                  \label{A4}
B_m\le \frac{3}{5}B_n,\qquad 1\le m<n.
\end{equation}
Then
\begin{align}
\label{A5}
\nonumber
&\alpha_{m,n}:=\left|B_n{\mathbf{P}}(T_g>n)-2\ff(0){\mathbf{E}}Z_{\nu_m}^*\right|\\
&\hspace{1cm}\le\rho_nB_n{\mathbf{P}}(T_g>\nu_m)+2{\mathbf{E}}Z_{\nu_m}^*\frac{B_m^2}{B_n^2}
+ {\mathbf{E}}\big[Z_{\nu_m}^*;Z_{\nu_m}^*>3B_m\big],
\end{align}
where $\varphi$ stands for the density of the standard normal distribution.
\end{proposition}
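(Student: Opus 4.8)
The plan is to follow the "universality via FCLT" philosophy advertised in the introduction: approximate $\mathbf P(T_g>n)$ by a functional of the rescaled walk $s_n(\cdot)$, transfer that functional to the Brownian motion using the Prokhorov distance $\pi_n$, and compute the Brownian quantity explicitly. The key device is the stopping time $\nu_m=\min\{\nu(B_m),m\}$: on $\{T_g>\nu_m\}$ we have reached a high level $Z_{\nu_m}>B_m$ unless $\nu_m=m$, and from that level the walk has enough "room" that a crude FCLT estimate suffices. So I would start by writing, via the strong Markov property at time $\nu_m$,
\begin{equation*}
\mathbf P(T_g>n)=\mathbf E\Big[\mathbf P\big(T_g>n\mid \mathcal F_{\nu_m}\big);T_g>\nu_m\Big],
\end{equation*}
and on the event $\{T_g>\nu_m\}$ the inner probability is $\mathbf P_{y}(\text{walk started from }S_{\nu_m}\text{ stays above }g_\cdot\text{ up to }n)$ with $y=S_{\nu_m}$. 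The point is that for the shifted walk on the time interval $[\nu_m,n]$, the relevant scale is $B_n$, the starting height above the (now almost negligible, since $g_k=o(B_k)$ and $\nu_m\le m$) boundary is $Z_{\nu_m}$, and one expects
\begin{equation*}
\mathbf P\big(T_g>n\mid\mathcal F_{\nu_m}\big)\approx \mathbf P\Big(\min_{t\le 1}W(t)>-\tfrac{Z_{\nu_m}}{B_n}\Big)\approx \sqrt{\tfrac{2}{\pi}}\,\frac{Z_{\nu_m}}{B_n}=\frac{2\varphi(0)Z_{\nu_m}}{B_n},
\end{equation*}
using $\mathbf P(\min_{t\le1}W(t)>-a)=\mathbf P(|W(1)|<a)\sim\sqrt{2/\pi}\,a$ for small $a$. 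Multiplying by $B_n$ and taking expectations over $\{T_g>\nu_m\}$ gives the claimed main term $2\varphi(0)\mathbf E Z^*_{\nu_m}$; everything else is error control.

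The error analysis splits into the three terms on the right of \eqref{A5}. First, I would quantify the FCLT approximation: the event $\{T_g>n\}$ restricted to $[\nu_m,n]$ and rescaled by $B_n$ is (close to) the event that a continuous path stays above a boundary of sup-norm $O(G_n/B_n)$; replacing the walk's law by Wiener measure costs a Prokhorov-distance term, and the residual boundary $G_n/B_n$ costs another such term — together these assemble into $\rho_n=3\pi_n+2G_n/B_n$ as in \eqref{A3}, producing the contribution $\rho_n B_n\mathbf P(T_g>\nu_m)$ after reinserting the indicator of $\{T_g>\nu_m\}$ and bounding the conditional probability trivially by $1$ scaled appropriately. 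Second, the Brownian estimate $\mathbf P(\min_{t\le1}W(t)>-a)=\sqrt{2/\pi}\,a+O(a^2)$ is only first-order; the quadratic remainder, with $a=Z_{\nu_m}/B_n$ and noting $Z_{\nu_m}\le$ (roughly) $3B_m$ on the bulk event via \eqref{A4}, yields the term $2\mathbf E Z^*_{\nu_m}\,B_m^2/B_n^2$. Third, on the part of the space where $Z_{\nu_m}$ is not small compared to $B_n$ — i.e. $Z^*_{\nu_m}>3B_m$, which by \eqref{A4} forces $Z^*_{\nu_m}$ comparable to $B_n$ — neither the linear approximation nor anything else is safe, so I simply discard that piece, contributing $\mathbf E[Z^*_{\nu_m};Z^*_{\nu_m}>3B_m]$. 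Collecting the three gives exactly the bound stated.

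The genuinely delicate step is the first one: making "restricted to $[\nu_m,n]$ and rescaled" precise and controlling it by the \emph{global} Prokhorov distance $\pi_n$ between $s_n$ and Brownian motion, rather than by some distance depending on $\nu_m$. Two subtleties must be handled. (i) Time-change: after $\nu_m$ the relevant clock runs from $B^2_{\nu_m}$ to $B_n^2$, not from $0$ to $B_n^2$; one uses the Markov property of the Brownian motion together with the fact that $B^2_{\nu_m}\le B_m^2\le(9/25)B_n^2$ is small, so the initial segment contributes only lower-order terms that get absorbed. (ii) One must express $\mathbf P(T_g>n\mid\mathcal F_{\nu_m})$ as $\psi(s_n)$ for a functional $\psi$ on $C[0,1]$ that is \emph{bounded} and whose discontinuity set is Wiener-null — the natural choice being $\psi(w)=\mathbf 1\{\inf_{t}(w(t)-\tilde g(t))>-a\}$ type functionals, whose boundary (paths touching the barrier) has Wiener measure zero by the absence of atoms; then weak convergence with a Prokhorov rate gives $|\mathbf E\psi(s_n)-\mathbf E\psi(W)|\le$ const $\cdot\,\pi_n$ up to the barrier-fuzzing by $\pi_n$. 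I would expect essentially all the real work of the proof — and most of the length — to go into (i)–(ii), carried out carefully enough that the constants come out as the $3$ and $2$ appearing in $\rho_n$ and \eqref{A5}; the rest is the elementary Brownian computation and bookkeeping sketched above.
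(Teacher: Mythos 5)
Your overall route is the same as the paper's: the strong Markov property at $\nu_m$ (the paper writes $\mathbf{P}(T_g>n)=\mathbf{E}\,Q_{\nu_m,n}(Z^*_{\nu_m})$ with $Q_{k,n}(y)=\mathbf{P}(y+\min_{k\le j\le n}(Z_j-Z_k)>0)$), transfer to the Brownian motion through the coupling behind the Prokhorov distance of Lemma \ref{L.pi}, removal of the boundary by shifting the level by $\pm 2G_n$ and the coupling error by $\pm 2\pi_nB_n$ (your ``barrier fuzzing'', which is how the constants $3\pi_n+2G_n/B_n$ arise), an explicit Gaussian computation, and a split of the range of $Z^*_{\nu_m}$ at $3B_m$. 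However, as written your derivation of the middle term does not yield the stated inequality. You attribute $2\mathbf{E}Z^*_{\nu_m}B_m^2/B_n^2$ to a ``quadratic remainder'' in $\mathbf{P}(\min_{t\le1}W(t)>-a)=\sqrt{2/\pi}\,a+O(a^2)$; but an $O(a^2)$ remainder gives, after multiplying by $B_n$ and using $Z^*_{\nu_m}\le 3B_m$, an error of order $(Z^*_{\nu_m})^2/B_n\le 3B_mZ^*_{\nu_m}/B_n$, i.e.\ $\mathbf{E}Z^*_{\nu_m}\cdot B_m/B_n$, which is strictly weaker than the claimed $\mathbf{E}Z^*_{\nu_m}\cdot B_m^2/B_n^2$. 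What makes the stated bound true is that $\varphi$ is even, so $\Psi(a)=2\int_0^a\varphi(x)\,dx=2\varphi(0)\bigl(a-a^3/6+\cdots\bigr)$ and the remainder is cubic; then $B_n\cdot O(a^3)\le 3\varphi(0)\,yB_m^2/B_n^2$ on $\{y\le 3B_m\}$, exactly as in \eqref{A25}.

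Two further points inside your own scheme. First, the time change you propose to ``absorb'' is not lower order: the correct Gaussian main term is $B_n\Psi\bigl(y/B_{k,n}\bigr)$ with $B_{k,n}^2=B_n^2-B_k^2$, and replacing $B_{k,n}$ by $B_n$ costs, via $\Psi(y)\le 2\varphi(0)y$ and condition \eqref{A4}, a contribution of size up to $yB_m^2/B_n^2$ (the paper's \eqref{A23}); it is the other half of the middle term and must be tracked explicitly. Second, on $\{Z^*_{\nu_m}>3B_m\}$ you cannot literally ``discard'': there $B_nQ_{\nu_m,n}(y)$ can be of order $B_n\gg y$, so bounding the discrepancy by $y$ (plus the other error terms) still requires the Gaussian upper bound $\Psi\bigl(y/B_{k,n}\bigr)\le 2\varphi(0)y/B_{k,n}$ on that event; only the other sign is handled by the trivial $2\varphi(0)y\le y$. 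These are repairs rather than a change of method, but without them the second and third terms of \eqref{A5} are not established.
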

We prepare the proof of this Proposition by a series of Lemmata.
Later on in this subsection we suppose that integers $k,m,n$ and real $y$ satisfy the conditions
\begin{equation}\label{A6}
1\le k\le m<n, \qquad 0\le y<\infty.
\end{equation}
Let
\begin{equation}\label{A7}
Q_{k,n}(y):=
\mathbf{P}\Big(y+\min_{k \le j\leq n}(Z_j-Z_k)>0\Big).
\end{equation}
With $\nu=\nu(B_m)$ we have,
\begin{align*}
\mathbf{P}(T_g>n)&=\mathbf{P}\Big(\min_{ j\leq n}Z_j>0\Big)\\
&=\mathbf{P}\Big(\nu\le m, T_g>\nu, Z_\nu+\min_{\nu \le j\leq n}(Z_j-Z_\nu)>0\Big)\\
&\hspace{0.5cm}+\mathbf{P}\Big(\nu> m, T_g>m, Z_m+\min_{m \le j\leq n}(Z_j-Z_m)>0\Big).
\end{align*}
Hence, by the strong Markov property at time $\nu_m=\min\{\nu,m\}$,
\begin{gather}                                                                                                   \label{A9}
\mathbf{P}(T_g>n)
=\mathbf{E}\left[Q_{\nu_m,n}(Z_{\nu_m});T_g>\nu_m\right]
=\mathbf{E}Q_{\nu_m,n}(Z^*_{\nu_m})
\end{gather}
since events $\{T_g>\nu_m\}$ and $\{Z^*_{\nu_m}>0\}$ coincide  and $Q_{\nu_m,n}(0)=0$.

The rest of the subsection is devoted to estimation of the functions  $Q_{k,n}$. We are going to use
the following property which may be considered as one of the definitions of the Prokhorov distance $\pi_n$.
\begin{lemma}
\label{L.pi}
For each $n\ge1$ we can define a random walk $\{S_k,\ k\geq1\}$ and a
Brownian motion $W_n(t),\ t\in[0,\infty)$, on a common
probability space so that
\begin{align*}                                      
\nonumber
&\mathbf{P}\left(\max_{0\le t\leq B^2_n}|s(t)-W_n(t)|>\pi_n  B_n\right)\\
&\hspace{1cm}=\mathbf{P}\left(\max_{0\le t\leq 1}|s_n(t)-W_n(tB_n^2)/B_n)|>\pi_n \right)
\leq \pi_n.
\end{align*}
\end{lemma}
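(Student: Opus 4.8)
The plan is to deduce the lemma from Strassen's coupling characterization of the Prokhorov metric together with Brownian scaling; no probabilistic input about the walk itself is needed, since $\pi_n$ is \emph{by definition} the Prokhorov distance $\pi(\mu_n,\mathcal W)$ between the law $\mu_n$ of the process $s_n(\cdot)\in C[0,1]$ and Wiener measure $\mathcal W$ on $C[0,1]$. Also note that the equality of the two probabilities in the statement is a triviality: it is just the substitution $t\leftrightarrow tB_n^2$ applied to $s_n(t)=s(tB_n^2)/B_n$, so the whole content is the bound $\le\pi_n$.

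First I would record that, $C[0,1]$ with the supremum norm being Polish, the infimum defining $\pi_n$ is effectively attained in the form: for every closed $F\subseteq C[0,1]$,
$$
\mu_n(F)\le\mathcal W(F^{\pi_n})+\pi_n,\qquad F^{\pi_n}:=\Big\{x:\inf_{y\in F}\|x-y\|_\infty\le\pi_n\Big\},
$$
the \emph{closed} $\pi_n$-neighbourhood. This follows by letting $\varepsilon\downarrow\pi_n$ in the defining inequalities $\mu_n(F)\le\mathcal W(\{x:\inf_{y\in F}\|x-y\|_\infty<\varepsilon\})+\varepsilon$ and using that the open $\varepsilon$-neighbourhoods of $F$ decrease to $F^{\pi_n}$ as $\varepsilon\downarrow\pi_n$, together with continuity from above of $\mathcal W$. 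Strassen's theorem then provides, on some probability space, a random element $\eta_n$ with law $\mu_n$ and a random element $w_n$ with law $\mathcal W$ such that $\mathbf{P}\bigl(\|\eta_n-w_n\|_{C[0,1]}>\pi_n\bigr)\le\pi_n$ (the direction of Strassen's theorem we use, producing a coupling from the neighbourhood inequalities, is its substantial part; the converse estimate $\mu_n(F)\le\mathcal W(F^{\pi_n})+\mathbf{P}(\|\eta_n-w_n\|>\pi_n)$ is immediate).

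It remains to redress $\eta_n$ and $w_n$ as the objects in the statement. The scaled broken line $s_n(\cdot)$ is a bimeasurable bijective function of $(X_1,\dots,X_n)$: reading off grid points, $S_k=B_n\,s_n(B_k^2/B_n^2)$ recovers the walk, hence all increments, from the path. Consequently, on an enlargement of the Strassen space we may construct a random walk $\{S_k,k\ge1\}$ with the prescribed independent increments $\{X_k\}$ — take $X_1,\dots,X_n$ to be these measurable functionals of $\eta_n$ and append fresh independent copies $X_{n+1},X_{n+2},\dots$ with the given distributions — so that $s_n(t)=s(tB_n^2)/B_n$ almost surely, with $s(\cdot)$ the broken line \eqref{T4.0}. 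Likewise, after extending $w_n$ to a standard Brownian motion on $[0,\infty)$ (gluing an independent Brownian increment process after time $1$), set $W_n(t):=B_n\,w_n(t/B_n^2)$; by Brownian scaling $W_n$ is a standard Brownian motion on $[0,\infty)$, and $W_n(tB_n^2)/B_n=w_n(t)$ for $t\in[0,1]$. Finally, for $t\in[0,B_n^2]$ one has $s(t)-W_n(t)=B_n\bigl(\eta_n(t/B_n^2)-w_n(t/B_n^2)\bigr)$, whence
$$
\max_{0\le t\le B_n^2}|s(t)-W_n(t)|
=B_n\,\|\eta_n-w_n\|_{C[0,1]}
=B_n\max_{0\le t\le1}\bigl|s_n(t)-W_n(tB_n^2)/B_n\bigr|,
$$
so the two probabilities in the statement coincide and are at most $\pi_n$ by the chosen coupling.

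I do not expect a serious obstacle: the content is entirely the invocation of Strassen's theorem, and the only point deserving care is the ``attainment'' step in the second paragraph — obtaining $\le\pi_n$ rather than $\le\pi_n+\delta$ — which uses Polishness of $C[0,1]$ and the closed-neighbourhood form of Strassen's theorem. Everything else (recovering the walk from its broken line, the independent enlargements, and the scaling identity) is bookkeeping.
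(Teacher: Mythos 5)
Your argument is correct and follows essentially the same route as the paper, which simply cites Strassen's theorem together with the Skorokhod representation lemma applied to the rescaled Wiener process $W_n(tB_n^2)/B_n$. Your extra care about attaining the bound $\le\pi_n$ (closed-neighbourhood form on the Polish space $C[0,1]$) and the explicit reconstruction of the walk and extension of the Brownian motion are just the details the paper leaves to those references.
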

This result follows from Strassen's result \cite{Strassen65} applied together with the Skorohod
lemma~\cite{Skorohod77} to the Wiener process $W_n(tB_n^2)/B_n)$.

\begin{remark} As it was shown in Theorem 1 in \cite{Sakh06} for each $\alpha>2$ and every $\ee_n>0$ it is possible to
construct a Wiener process $W_n(t)$ such that
$$
\mathbf{P}\left(\max_{t\leq B^2_n}|s(t)-W_n(t)|>C\alpha\varepsilon_n B_n\right)
\leq L^{(\alpha)}_n(\varepsilon_n),
$$
where $C$ is an absolute constant and
\begin{equation*}
L^{(\alpha)}_n(\ee):=
\sum_{k=1}^n \mathbf{E}\min\left\{\frac{|X_k|^\alpha}{(\ee B_n)^\alpha},\frac{X_k^2}{(\ee B_n)^2}\right\}
\end{equation*}
may be called ``truncated Lindeberg fraction of order $\alpha$''.

The function $L^{(\alpha)}_n$ is very useful in estimating the rate of convergence in the functional central limit
theorem for the random walk $S_n$.
It is known (see, for example, Remark 2 in \cite{Sakh06}) that the Lindeberg Condition \eqref{lind.cond} is equivalent to
\begin{equation*} 
L^{(\alpha)}_n(\ee)\to 0
\quad\text{for every } \ \ee>0.
\end{equation*}
Moreover, there exists a sequence $\varepsilon_n\to0$ such that
\begin{equation*}                   
L^{(\alpha)}_n(\varepsilon_n)\le\ee_n\to 0\quad\text{as }n\to\infty.
\end{equation*}
As a result,
\begin{equation*} 
\pi_n\le C\alpha\ee_n\to0,
\end{equation*}
and this relation is equivalent to the Lindeberg condition.
\end{remark}

To state the next Lemma  we introduce further notation.
For every $1\leq k<n$ we define
\begin{equation}
\label{A10}
B_{k,n}^2:=B_n^2-B_k^2>0\quad\text{and}\quad
\ee_{k,n}:=\frac{\pi_nB_n+G_n}{B_{k,n}}.
\end{equation}
(Recall that $G_n=\max_{k\leq n}|g_k|$.)
It is well known that
\begin{gather}                                                                                                       \label{A11}
\forall\ y\ge0\quad
Q(y):=\mathbf{P}\Big(y+\min_{t\leq 1}W(t)>0\Big)=\Psi(y):=2\int_0^{y}\varphi(x)dx.
\end{gather}
It is easy to see from \eqref{A11} that
\begin{gather}                                                                                                       \label{A11+}
|\Psi(x+ z)-\Psi(x)|\le2\ff(0)|z|\quad\text{for all real}\quad x,z.
\end{gather}
\begin{lemma}                                                                                                          \label{L.est1}
For all $1\leq k<n$ and $y\ge0$,
\begin{equation}
\label{A12}
\left|Q_{k,n}(y)-\Psi\Big(\frac{y}{B_{k,n}}\Big)\right|
\le\pi_n+4\ff(0)\ee_{k,n}.
\end{equation}
\end{lemma}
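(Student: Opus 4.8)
The plan is to transfer the estimate on the process $s(t)$ furnished by Lemma~\ref{L.pi} to the random variables $Z_j - Z_k = (S_j - S_k) - (g_j - g_k)$, and then compare the resulting Brownian-motion probability with $\Psi$. First I would fix $n$ and invoke Lemma~\ref{L.pi} to place the walk $\{S_j\}$ and a Brownian motion $W_n$ on one probability space with $\mathbf P(A_n) \le \pi_n$, where $A_n := \{\max_{t \le B_n^2}|s(t) - W_n(t)| > \pi_n B_n\}$. On the complement of $A_n$, at the times $t = B_j^2$ (where $s(B_j^2) = S_j$) we get $|S_j - W_n(B_j^2)| \le \pi_n B_n$ for all $j \le n$; combining this with $|g_j - g_k| \le 2G_n$ yields $|(Z_j - Z_k) - (W_n(B_j^2) - W_n(B_k^2))| \le 2\pi_n B_n + 2G_n$ for $k \le j \le n$. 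Hence, off $A_n$, the event $\{y + \min_{k\le j\le n}(Z_j - Z_k) > 0\}$ is sandwiched between $\{y' + \min_{k\le j\le n}(W_n(B_j^2) - W_n(B_k^2)) > 0\}$ with $y' = y \pm (2\pi_n B_n + 2G_n)$. This gives
\begin{equation*}
\big|Q_{k,n}(y) - \mathbf P\big(\widetilde y + \min_{k\le j\le n}(W_n(B_j^2)-W_n(B_k^2)) > 0\big)\big| \le \pi_n
\end{equation*}
for a suitable $\widetilde y$ within $2\pi_n B_n + 2G_n$ of $y$, after integrating out $A_n$.

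Next I would handle the Brownian term. By the strong Markov property of $W_n$ at time $B_k^2$, the process $u \mapsto W_n(B_k^2 + u) - W_n(B_k^2)$ is again a Brownian motion, and the discrete minimum over $\{B_j^2 - B_k^2 : k \le j \le n\} \subset [0, B_{k,n}^2]$ dominates the continuous minimum over $[0, B_{k,n}^2]$. Since for a Brownian motion the minimum over a finite set and over the whole interval only differ in one direction, and since running minimum probabilities of the form $\mathbf P(\min > -a)$ are what appear, one can squeeze: $\mathbf P(\widetilde y + \min_{\text{discrete}} > 0) \ge \mathbf P(\widetilde y + \min_{[0,B_{k,n}^2]} > 0) = \Psi(\widetilde y / B_{k,n})$ by Brownian scaling and \eqref{A11}. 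For the matching upper bound I would insert the endpoint $t = B_n^2$ into the discrete set (it is already there, $j = n$), so that the discrete minimum is bounded by the value at an interior-dense grid; more cleanly, one observes that replacing the discrete minimum by the continuous one changes $Q$ only by a quantity controlled again through the coupling by absorbing it into the $\pi_n$ term already spent, or one simply notes the one-sided bound suffices in both directions after re-labelling $\widetilde y$. Either way, $\big|\mathbf P(\widetilde y + \min_{\text{discrete}} > 0) - \Psi(\widetilde y/B_{k,n})\big|$ contributes nothing beyond the $\pi_n$ already counted.

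Finally I would remove the shift from $\widetilde y$ to $y$ using the Lipschitz estimate \eqref{A11+}: since $|\widetilde y - y| \le 2\pi_n B_n + 2G_n$,
\begin{equation*}
\Big|\Psi\Big(\frac{\widetilde y}{B_{k,n}}\Big) - \Psi\Big(\frac{y}{B_{k,n}}\Big)\Big| \le 2\varphi(0)\,\frac{2\pi_n B_n + 2G_n}{B_{k,n}} = 4\varphi(0)\,\frac{\pi_n B_n + G_n}{B_{k,n}} = 4\varphi(0)\,\varepsilon_{k,n},
\end{equation*}
recalling the definition \eqref{A10} of $\varepsilon_{k,n}$. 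Collecting the $\pi_n$ from the coupling and the $4\varphi(0)\varepsilon_{k,n}$ from the shift gives \eqref{A12}. The main obstacle is the second paragraph: making rigorous that passing from the continuous Brownian minimum over $[0,B_{k,n}^2]$ to the minimum over the finite grid $\{B_j^2 - B_k^2\}$ costs nothing in the final bound. This needs care because a priori the discrete minimum is strictly larger; the clean fix is that the coupling in the first paragraph is stated for the \emph{polygonal} process $s(t)$, whose minimum over $[0,B_n^2]$ equals $\min_{j\le n} S_j$ exactly (a polygonal function attains its extrema at vertices), so comparing $\min_{[0,1]} s_n$ with $\min_{[0,1]} W_n(\cdot B_n^2)/B_n$ is legitimate with no grid-vs-interval discrepancy at all, and the whole argument goes through with only the two stated error terms.
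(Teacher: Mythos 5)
Your proposal is correct and, once you adopt the fix in your final paragraph (working throughout with the continuous minimum of the polygonal process $s(t)$, which coincides with $\min_{k\le j\le n}S_j$), it is essentially the paper's own argument: couple via Lemma~\ref{L.pi}, absorb the boundary into a $2G_n$ shift of $y$, pay $\pi_n$ in probability plus a $2\pi_n B_n$ shift for replacing $s$ by $W_n$, rescale to get $\Psi(\cdot/B_{k,n})$, and convert the total shift into $4\ff(0)\ee_{k,n}$ via \eqref{A11+}. The detour in your second paragraph about discrete versus continuous Brownian minima is unnecessary in the paper's treatment, which compares the two continuous minima over $[B_k^2,B_n^2]$ from the outset.
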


\begin{proof}
For every $1\leq k<n$ consider
\begin{align}
\label{b3}
\nonumber
q_{k,n}(y):=\mathbf{P}\left(y+\min_{k\le j\leq n}(S_j-S_k)>0\right)
=\mathbf{P}\left(y+\min_{B^2_k\le t\leq B^2_n}(s(t)-s(B^2_k))>0\right),
\end{align}
where  $s(t)$ is the random broken line defined in \eqref{T4.0}.
It follows from \eqref{A1} that, for all $1\le k\le j\le n$,
$$
|(Z_j-Z_k)-(S_j-S_k)|=|g_k-g_j|\le2G_n.
$$
Hence, for $Q_{k,n}$ defined in \eqref{A7}, we have
\begin{equation}
\label{A15}
q_{k,n}(y_-)\le Q_{k,n}(y)\le q_{k,n}(y_+),\quad\text{where}\quad y_\pm:=y\pm2G_n.
\end{equation}

On the other hand, it is easy to see that
\begin{align*}
\left|\min_{B^2_k\le t\leq B^2_n}(s(t)-s(B^2_k))-\min_{B^2_k\le t\leq B^2_n}(W_n(t)-W_n(B^2_k))\right|
\le2\max_{t\leq B^2_n}|s(t)-W_n(t)|,
\end{align*}
with $W_n(t)$ is the Wiener process introduced in Lemma \ref{L.pi}.
Applying Lemma \ref{L.pi}, we obtain
\begin{align}\label{A16}
\nonumber
q_{k,n}(y_+)
&\le\pi_n+ \mathbf{P}\Big(y_++\min_{B^2_k\le t\leq B^2_n}(W_n(t)-W_n(B^2_k))>-2\pi_nB_n\Big)\\
\nonumber
&=\pi_n+\mathbf{P}\Big(\frac{y_++2\pi_nB_n}{B_{k,n}}+\min_{t\leq 1}W(t)>0\Big)\\
&=Q\Big(\frac{y}{B_{k,n}}+2\ee_{k,n}\Big)+\pi_n,
\end{align}
where we used the fact that $W(t)=(W_n(tB_{k,n}^2)-W_n(B^2_k))/B_{k,n}$ is also a standard Wiener process.
Using the same arguments, we obtain
\begin{align}\label{A17}
q_{k,n}(y_-)\geq Q\Big(\frac{y}{B_{k,n}}-2\ee_{k,n}\Big)-\pi_n .
\end{align}

It is easy to see from \eqref{A11} and \eqref{A11+} that, for $x,\ee\ge0$,
$$
Q(x+\ee)=\Psi(x+\ee)\le\Psi(x)+2\ff(0)\ee
$$
and
$$
Q(x-\ee)\ge\Psi(x-\ee)\ge\Psi(x)-2\ff(0)\ee.
$$
So, with $x=y/B_{k,n}$ and $\ee=2\ee_{k,n}$ we have
$$
\Big| Q\Big(\frac{y}{B_{k,n}}\pm2\ee_{k,n}\Big)-\Psi\Big(\frac{y}{B_{k,n}}\Big)\Big|
\le4\ff(0)\ee_{k,n}.
$$
Applying this inequality together with \eqref{A15}--\eqref{A17} we immediately obtain \eqref{A12}.
\end{proof}

\begin{lemma}                                                                                                          \label{L.est2}
Under conditions  \eqref{A4} and  \eqref{A6},
\begin{equation}
\label{A20}
\big|\Delta^*_{k,n}(y)\big|
\leq\delta^*_{k,n}(y):= \rho_nB_n\mathbb{I}\{y>0\} +2y\frac{B_m^2}{B_n^2}+y\mathbb{I}\{y>3B_m\},
\end{equation}
where
\begin{equation}
\label{A20+}
\Delta^*_{k,n}(y):=B_nQ_{k,n}(y)-2y\ff(0).
\end{equation}

\end{lemma}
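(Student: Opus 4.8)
The plan is to bound $\Delta^*_{k,n}(y)=B_nQ_{k,n}(y)-2y\ff(0)$ by splitting it, via the triangle inequality, into a Gaussian-approximation part governed by Lemma~\ref{L.est1} and a part measuring how far $\Psi$ is from its tangent line at the origin:
\begin{equation*}
|\Delta^*_{k,n}(y)|\le B_n\bigl|Q_{k,n}(y)-\Psi(y/B_{k,n})\bigr|+\bigl|B_n\Psi(y/B_{k,n})-2y\ff(0)\bigr|.
\end{equation*}
If $y=0$ then $Q_{k,n}(0)=0$ and both sides of \eqref{A20} vanish, so I may assume $y>0$, in which case $\mathbb{I}\{y>0\}=1$. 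The one elementary fact I would record first is that \eqref{A4} together with $k\le m$ gives $B_{k,n}^2=B_n^2-B_k^2\ge B_n^2-B_m^2\ge\tfrac{16}{25}B_n^2$, hence $B_{k,n}\ge\tfrac45B_n$ and $B_m^2/B_n^2\le\tfrac9{25}<1$; this will be used repeatedly.

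For the first summand I would insert $\ee_{k,n}=(\pi_nB_n+G_n)/B_{k,n}$ into \eqref{A12} and use $B_n/B_{k,n}\le\tfrac54$, obtaining $B_n|Q_{k,n}(y)-\Psi(y/B_{k,n})|\le\pi_nB_n+5\ff(0)(\pi_nB_n+G_n)$. Since $5\ff(0)=5/\sqrt{2\pi}<2$, this is at most $3\pi_nB_n+2G_n=\rho_nB_n$, i.e.\ exactly the first term of $\delta^*_{k,n}(y)$.

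For the second summand I would estimate $B_n\Psi(y/B_{k,n})-2y\ff(0)$ from above and from below. From above, $\Psi(x)\le2\ff(0)x$ combined with $B_n-B_{k,n}=B_k^2/(B_n+B_{k,n})\le B_m^2/B_n$ and $B_{k,n}\ge\tfrac45B_n$ gives
\begin{equation*}
B_n\Psi(y/B_{k,n})-2y\ff(0)\le2\ff(0)y\,\frac{B_n-B_{k,n}}{B_{k,n}}\le\tfrac52\ff(0)\,\frac{yB_m^2}{B_n^2}\le\frac{yB_m^2}{B_n^2},
\end{equation*}
since $\tfrac52\ff(0)=5/(2\sqrt{2\pi})<1$. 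From below, using $B_n\Psi\ge B_{k,n}\Psi$ and $2y\ff(0)-B_{k,n}\Psi(y/B_{k,n})=2B_{k,n}\int_0^{y/B_{k,n}}(\ff(0)-\ff(t))\,dt$ together with $\ff(0)-\ff(t)=\ff(0)(1-e^{-t^2/2})\le\ff(0)t^2/2$, I get $2y\ff(0)-B_n\Psi(y/B_{k,n})\le\ff(0)y^3/(3B_{k,n}^2)$, while trivially this is also $\le2y\ff(0)\le y$. Now if $0<y\le3B_m$ the cubic bound yields $\le\tfrac{75}{16}\ff(0)\,yB_m^2/B_n^2<2yB_m^2/B_n^2$ (using $75/(16\sqrt{2\pi})<2$), and if $y>3B_m$ the linear bound yields $\le y$; in both regimes the second summand is at most $2yB_m^2/B_n^2+y\mathbb{I}\{y>3B_m\}$, and adding the $\rho_nB_n$ from the first summand reproduces $\delta^*_{k,n}(y)$.

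The only point requiring care is the numerical bookkeeping — checking $5/\sqrt{2\pi}<2$, $5/(2\sqrt{2\pi})<1$ and $75/(16\sqrt{2\pi})<2$, each of which holds with comfortable margin — and making sure the case split at $y=3B_m$ matches the indicator in $\delta^*_{k,n}$. I do not expect a genuine obstacle: once Lemma~\ref{L.est1} is in hand, the statement is a deterministic consequence of it plus the monotonicity and convexity properties of $\ff$ and $\Psi$, so the work is simply organising the two-sided estimate cleanly.
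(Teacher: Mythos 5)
Your proof is correct and follows essentially the same route as the paper: the same decomposition of $\Delta^*_{k,n}(y)$ into the approximation error controlled by Lemma~\ref{L.est1} (bounded by $\rho_nB_n\mathbb{I}\{y>0\}$) and the deviation of $B_n\Psi(y/B_{k,n})$ from $2y\ff(0)$, with the upper bound from $\Psi(x)\le2\ff(0)x$ and the lower bound from a cubic estimate near the origin plus the trivial bound $\le y$, split at $y=3B_m$. The only differences are cosmetic (you keep exact numerical constants such as $5/\sqrt{2\pi}<2$, while the paper uses $\ff(0)\le 2/5$ and replaces $B_{k,n}$ by $B_n$ via monotonicity of $\Psi$), and all your constants check out.
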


\begin{proof}
First of all note that if $m$ satisfies \eqref{A4} then, for $1\leq k\leq m$,
\begin{equation}
\label{A21}
B_{k,n}\ge B_{m,n}\ge\frac{4}{5}B_n,\quad \ff(0)\le\frac{2}{5},
\quad \pi_n+4\ff(0)\ee_{k,n}\le\rho_n.
\end{equation}
In the last relation we have used \eqref{A10} and \eqref{A3}.
Set
\begin{equation}\label{def_delta_n}
\delta_{k,n}(y):=B_n\Psi\Big(\frac{y}{B_{k,n}}\Big)-2y\ff(0).
\end{equation}
Next we will bound $\delta_{k,n}(y)$ for $y\ge 0$ from above and below.
Since $\Psi(y)\le2y\ff(0)$ for all $y\ge0$, we have the following upper bound
\begin{equation}
\label{A23}
\delta_{k,n}(y)\le2y\ff(0)\left(\frac{B_n}{B_{k,n}}-1\right)
\le \frac{y(B_n^2-B_{k,n}^2)}{B_{k,n}(B_{k,n}+B_n)}
\le\left(\frac{5}{4}\right)^2\frac{yB_k^2}{2B_{n}^2}\le\frac{yB_m^2}{B_{n}^2}.
\end{equation}

We will need two different lower bounds. First, it follows immediately from
\eqref{def_delta_n} that
\begin{equation}
\label{A22}
\delta_{k,n}(y) \ge-2y\ff(0)\ge -y\quad \forall\ y\ge0.
\end{equation}
Second,  definition \eqref{A11} and the inequality $\ff(x)\ge\ff(0)(1-x^2/2)$  yield for $y\ge0$,
\begin{gather*}  
\Psi(y)=2\int_0^{y}\varphi(x)dx\ge2\int_0^{y}\ff(0)(1-x^2/2)dx=2\ff(0)(y-y^3/6).
\end{gather*}
Then we have
\begin{align}
\nonumber
\delta_{k,n}(y)&\ge B_n\Psi\Big(\frac{y}{B_{n}}\Big)-2y\ff(0)\ge -B_n\frac{2\ff(0)}{6}\Big(\frac{y}{B_{n}}\Big)^3
\\ \label{A25}
&\ge -3\ff(0)\frac{yB_m^2}{B_{n}^2}\ge -2\frac{yB_m^2}{B_{n}^2}
\qquad\text{for all}\qquad y\in[0,3B_m].
\end{align}

It follows from inequalities  \eqref{A23}-- \eqref{A25} that
\begin{equation} \label{A26}
\left|B_n\Psi\Big(\frac{y}{B_{k,n}}\Big)-2y\ff(0)\right|
\le2\frac{yB_m^2}{B_{n}^2}+y\mathbb{I}\{y>3B_m\} \qquad \forall\ y\ge0.
\end{equation}
 On the other hand we obtain from  \eqref{A12} and \eqref{A21} that
\begin{equation}\label{A27}
\left|Q_{k,n}(y)-\Psi\Big(\frac{y}{B_{k,n}}\Big)\right|\le\rho_n\mathbb{I}\{y>0\}
\end{equation}
since $Q_{k,n}(0)=0=\Psi(0)$.
Combining  \eqref{A26} and \eqref{A27}, we immediately find \eqref{A20}.
\end{proof}

\begin{proof} [Proof of Proposition \ref{CLT-estim}]
It follows from \eqref{A9} and \eqref{A20+} that
$$
\left|B_n{\mathbf{P}}(T_g>n)-2\ff(0){\mathbf{E}}Z_{\nu_m}^*\right|
=\left|\mathbf{E}\Delta_{\nu_m,n}^*(Z_{\nu_m}^*)\right|.
$$
Hence, by Lemma \ref{L.est2},
\begin{align*}
\nonumber
\left|\mathbf{E}\Delta_{\nu_m,n}^*(Z_{\nu_m}^*)\right|
&\le \mathbf{E}\left|\Delta_{\nu_m,n}^*(Z_{\nu_m}^*)\right|
\le \mathbf{E}\delta_{\nu_m,n}^*(Z_{\nu_m}^*)\\
&=\rho_nB_n{\mathbf{P}}(Z_{\nu_m}^*>0)+2{\mathbf{E}}Z_{\nu_m}^*\frac{B_m^2}{B_n^2}
+ {\mathbf{E}}\big[Z_{\nu_m}^*;Z_{\nu_m}^*>3B_m\big].
\end{align*}
It is easy to see that the obtained  estimate coincides with \eqref{A5}
once we recall that ${\mathbf{P}}(T_g>\nu_m)={\mathbf{P}}(Z^*_{\nu_m}>0)$.
Thus, the proof of the Proposition is completed.
\end{proof}

\subsection{Martingale properties of the sequence $Z^*_n$}
In this subsection we are going to prove the following  assertions.
\begin{lemma}                                                                       \label{Lmart}
For all $m\ge1$ we have
\begin{align}
\label{M1}
\mathbf{E}Z^*_m&=-\mathbf{E}[S_{T_g};T_g\le m]-g_m\mathbf{P}(T_g>m)
\end{align}
and
\begin{align}
\label{M2}
\mathbf{E}Z^*_{\nu_m}&=-\mathbf{E}[S_{T_g};T_g\le\nu_m]-\mathbf{E}[g_{\nu_m};T_g>\nu_m].
\end{align}
\end{lemma}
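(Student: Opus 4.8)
\textbf{Plan for the proof of Lemma \ref{Lmart}.}
The two identities are of the same nature, so the plan is to prove \eqref{M1} first and then obtain \eqref{M2} by the optional stopping / strong Markov argument, or more directly by a parallel computation at the random time $\nu_m$. The starting point for \eqref{M1} is the elementary decomposition of the identity
$$
S_m=S_{T_g\wedge m}+(S_m-S_{T_g\wedge m}),
$$
together with the observation that $\{T_g>m\}$ and $\{T_g\le m\}$ partition the sample space. On the event $\{T_g\le m\}$ the contribution of $S_m$ to $\mathbf{E}[S_m;\cdot]$ will be handled using that the increments $X_{T_g+1},\dots,X_m$ are independent of $\mathcal F_{T_g}$ and centred, so that $\mathbf{E}[S_m-S_{T_g};T_g\le m]=0$; this is a routine optional-stopping statement for the martingale $S_n$, valid because $T_g\wedge m$ is a bounded stopping time. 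Hence $\mathbf{E}[S_m;T_g\le m]=\mathbf{E}[S_{T_g};T_g\le m]$.

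\smallskip
Next I would write $\mathbf{E}Z^*_m=\mathbf{E}[S_m-g_m;T_g>m]=\mathbf{E}[S_m;T_g>m]-g_m\mathbf{P}(T_g>m)$, and then use
$$
\mathbf{E}[S_m;T_g>m]=\mathbf{E}S_m-\mathbf{E}[S_m;T_g\le m]=0-\mathbf{E}[S_{T_g};T_g\le m],
$$
where $\mathbf{E}S_m=0$ by the standing assumption $\mathbf{E}X_k=0$. Combining the last two displays yields \eqref{M1}. A small technical point to check is that all the expectations appearing are well defined: $\mathbf{E}|S_m|<\infty$ since each $X_k$ has a finite second (hence first) moment, so every truncation of $S_m$ is integrable and the additive splitting of expectations is legitimate; $g_m$ is a deterministic real number by hypothesis, so $g_m\mathbf{P}(T_g>m)$ causes no trouble.

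\smallskip
For \eqref{M2}, the cleanest route is to note that $\nu_m=\min\{\nu(B_m),m\}$ is again a bounded stopping time (it is at most $m$), so the same optional-stopping identity gives $\mathbf{E}[S_{\nu_m}-S_{T_g\wedge\nu_m};\,\cdot\,]=0$ and $\mathbf{E}S_{\nu_m}=0$. Since $\{T_g>\nu_m\}=\{Z^*_{\nu_m}>0\}$ and on that event $S_{T_g\wedge\nu_m}=S_{\nu_m}$, repeating the computation above with $m$ replaced by the random time $\nu_m$ gives
$$
\mathbf{E}Z^*_{\nu_m}=\mathbf{E}[S_{\nu_m};T_g>\nu_m]-\mathbf{E}[g_{\nu_m};T_g>\nu_m]
=-\mathbf{E}[S_{T_g};T_g\le\nu_m]-\mathbf{E}[g_{\nu_m};T_g>\nu_m],
$$
which is \eqref{M2}; here one uses $\mathbf{E}[S_{\nu_m};T_g\le\nu_m]=\mathbf{E}[S_{T_g};T_g\le\nu_m]$, again an instance of optional stopping since $T_g\wedge\nu_m\le\nu_m\le m$ is bounded. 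The only genuine obstacle, and it is a mild one, is justifying the interchange/optional-stopping step at the random time $\nu_m$ rather than at the deterministic time $m$: this requires checking that $T_g\wedge\nu_m$ is a stopping time bounded by $m$ and that $S$ restricted to times $\le m$ is a uniformly integrable martingale (which is automatic for a finite-horizon $L^1$ martingale), after which Doob's optional stopping theorem applies verbatim. Everything else is bookkeeping with the partition $\{T_g\le\nu_m\}\cup\{T_g>\nu_m\}$ and the definitions \eqref{A1}.
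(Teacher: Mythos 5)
Your proposal is correct and follows essentially the same route as the paper: the paper applies the optional stopping theorem to the bounded stopping times $T_g\wedge\nu$ with $\nu=m$ and $\nu=\nu_m$, writing $0=\mathbf{E}S_{T_g\wedge\nu}=\mathbf{E}[S_{T_g};T_g\le\nu]+\mathbf{E}[S_{\nu};T_g>\nu]$ and then inserting the definition of $Z^*$, which is exactly your decomposition rearranged. Your additional checks (integrability of $S_m$, boundedness of $T_g\wedge\nu_m\le m$) are the same routine justifications implicit in the paper's one-line appeal to optional stopping.
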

\begin{corollary}\label{rem1}
For all $n\ge m\ge1$ we have
\begin{align}                                                                \label{M3}
\mathbf{E}Z^*_{\nu_m}-\mathbf{E}Z^*_n \le2G_n \mathbf{P}(T_g>\nu_m),
\quad
\mathbf{E}Z^*_m-\mathbf{E}Z^*_n \le2G_n \mathbf{P}(T_g>m),
\end{align}
\begin{align}                                                                                  \label{M4}
|\mathbf{E}Z^*_{\nu_m}-\mathbf{E}Z^*_n| \le\alpha_{m,n}^*:=2G_n \mathbf{P}(T_g>\nu_m)
+\mathbf{E}[-Z_{T_g};\nu_m<T_g\le n]
\end{align}
and
\begin{align}                                                                                  \label{M5}
\max_{m\le k\le n}|\mathbf{E}Z^*_k-\mathbf{E}Z^*_n| \le2G_n \mathbf{P}(T_g>m)
+\mathbf{E}[-Z_{T_g};m<T_g\le n]\le\alpha_{m,n}^*.
\end{align}
\end{corollary}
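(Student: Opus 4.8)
The plan is to obtain Lemma~\ref{Lmart} from the optional stopping theorem, and then to deduce Corollary~\ref{rem1} by elementary bookkeeping with the identities \eqref{M1}--\eqref{M2}. For \eqref{M1} I would use that $\{S_n\}$ is a mean-zero martingale with $\mathbf{E}|X_k|<\infty$, that $m\wedge T_g$ is a bounded stopping time, and that $|S_{m\wedge T_g}|\le\sum_{k=1}^m|X_k|\in L^1$, so that $\mathbf{E}S_{m\wedge T_g}=0$. Splitting on $\{T_g\le m\}$ and its complement gives $\mathbf{E}[S_m;T_g>m]=-\mathbf{E}[S_{T_g};T_g\le m]$, and since $Z^*_m=(S_m-g_m)\mathbb{I}\{T_g>m\}$, subtracting $g_m\mathbf{P}(T_g>m)$ yields \eqref{M1}. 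Running the same argument at the bounded stopping time $\nu_m\wedge T_g$ (recall $\nu_m\le m$) gives $\mathbf{E}[S_{\nu_m};T_g>\nu_m]=-\mathbf{E}[S_{T_g};T_g\le\nu_m]$, and then $Z^*_{\nu_m}=(S_{\nu_m}-g_{\nu_m})\mathbb{I}\{T_g>\nu_m\}$ yields \eqref{M2}.

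For the first inequality in \eqref{M3} I would subtract \eqref{M1} written for $n$ from \eqref{M2}: the terms $\mathbf{E}[S_{T_g};T_g\le\cdot]$ telescope into $\mathbf{E}[S_{T_g};\nu_m<T_g\le n]$, leaving the boundary contributions $-\mathbf{E}[g_{\nu_m};T_g>\nu_m]$ and $g_n\mathbf{P}(T_g>n)$. Now invoke two facts: by the definition of $T_g$ one has $S_{T_g}\le g_{T_g}$, i.e.\ $Z_{T_g}\le0$ and $-Z_{T_g}\ge0$; and $|g_k|\le G_n$ for every $k\le n$. Bounding $S_{T_g}\le g_{T_g}\le G_n$ on $\{T_g\le n\}$, together with $|g_{\nu_m}|\le G_n$ and $|g_n|\le G_n$, and then using the partition $\{T_g>\nu_m\}=\{\nu_m<T_g\le n\}\cup\{T_g>n\}$ to combine $\mathbf{P}(\nu_m<T_g\le n)+\mathbf{P}(T_g>n)=\mathbf{P}(T_g>\nu_m)$, one arrives at $\mathbf{E}Z^*_{\nu_m}-\mathbf{E}Z^*_n\le2G_n\mathbf{P}(T_g>\nu_m)$. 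The second inequality in \eqref{M3} is the same computation with $\nu_m$ replaced by the deterministic time $m$.

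For \eqref{M4} the upper bound on $\mathbf{E}Z^*_{\nu_m}-\mathbf{E}Z^*_n$ is \eqref{M3}, and the extra term $\mathbf{E}[-Z_{T_g};\nu_m<T_g\le n]$ in $\alpha^*_{m,n}$ is nonnegative. For the reverse direction I would write $-S_{T_g}=-Z_{T_g}-g_{T_g}\le-Z_{T_g}+G_n$ on $\{T_g\le n\}$ inside the expression for $\mathbf{E}Z^*_n-\mathbf{E}Z^*_{\nu_m}$; the $-Z_{T_g}$ part produces exactly $\mathbf{E}[-Z_{T_g};\nu_m<T_g\le n]$ and all remaining terms are again absorbed into $2G_n\mathbf{P}(T_g>\nu_m)$ via the same partition identity, giving $\alpha^*_{m,n}$. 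Finally \eqref{M5} follows by redoing this computation with a fixed $k\in[m,n]$ in place of $\nu_m$: the inclusions $\{T_g>k\}\subseteq\{T_g>m\}$ and $\{k<T_g\le n\}\subseteq\{m<T_g\le n\}$, together with $-Z_{T_g}\ge0$, let one pass to a bound uniform in $k$, namely $2G_n\mathbf{P}(T_g>m)+\mathbf{E}[-Z_{T_g};m<T_g\le n]$; and since $\nu_m\le m$, this is in turn $\le\alpha^*_{m,n}$.

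There is no serious analytic obstacle here. The only points requiring care are the $L^1$-integrability needed to apply optional stopping, which holds at every fixed horizon because $\max_{k\le n}|S_k|\le\sum_{k=1}^n|X_k|$ even though $\mathbf{E}|S_{T_g}|$ may be infinite in general, and the sign-and-telescoping bookkeeping that makes the factor $2$ in front of $G_n$ come out correctly through the identity $\mathbf{P}(\nu_m<T_g\le n)+\mathbf{P}(T_g>n)=\mathbf{P}(T_g>\nu_m)$. I expect the write-up to be short.
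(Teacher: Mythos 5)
Your proposal is correct and follows essentially the same route as the paper: subtract the optional-stopping identities of Lemma~\ref{Lmart} to get exact expressions for $\mathbf{E}Z^*_{\nu_m}-\mathbf{E}Z^*_n$ and $\mathbf{E}Z^*_k-\mathbf{E}Z^*_n$, then use $Z_{T_g}\le 0$, $|g_k|\le G_n$ for $k\le n$, the partition $\mathbf{P}(\nu_m<T_g\le n)+\mathbf{P}(T_g>n)=\mathbf{P}(T_g>\nu_m)$, and monotonicity in $k$ (via $\nu_m\le m$ and the event inclusions) for \eqref{M5}. The only difference is cosmetic bookkeeping — you bound each boundary term by $G_n$ and recombine, while the paper groups them as differences bounded by $2G_n$ — which yields the identical estimates.
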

\begin{remark}\label{rem2}
If $\{g_n\}$ is non-increasing for all $n\ge M\ge1$ then the sequence $\{Z^*_n\}$ is a submartingale (for $n\ge M$) and, hence, sequence
$\{Z^*_n\}$ is  non-decreasing for $n\ge M$ whereas function $U_g(t)$ is non-decreasing when $t\ge B^2_M$.

Indeed,
setting $\mathcal{F}_n:=\sigma(X_1,X_2,\ldots, X_n)$, we have
\begin{align*}
\mathbf{E}[Z^*_{n+1}|\mathcal{F}_n]
&=\mathbf{E}[(S_{n+1}-g_{n+1}){\mathbb I}\{T_g>n+1\}|\mathcal{F}_n]\\
&=\mathbf{E}[(S_{n+1}-g_{n+1})({\mathbb I}\{T_g>n\}-{\mathbb I}\{T_g=n+1\})|\mathcal{F}_n]\\
&=\mathbf{E}[(S_{n+1}-g_{n+1})|\mathcal{F}_n]{\mathbb I}\{T_g>n\}
-\mathbf{E}[(S_{n+1}-g_{n+1}){\mathbb I}\{T_g=n+1\}|\mathcal{F}_n]\\
&=(S_{n}-g_{n+1}){\mathbb I}\{T_g>n\}
-\mathbf{E}[(S_{n+1}-g_{n+1}){\mathbb I}\{T_g=n+1\}|\mathcal{F}_n]\\
&=Z^*_n+(g_n-g_{n+1}){\mathbb I}\{T_g>n\}
+\mathbf{E}[(g_{n+1}-S_{n+1}){\mathbb I}\{T_g=n+1\}|\mathcal{F}_n].
\end{align*}
Since $g_{n+1}\ge S_{n+1}$  on the event $\{T_g=n+1\}$ and $g_n\geq g_{n+1}$ for all $n\ge M$,
we obtain the submartingale property.
\hfill$\diamond$
\end{remark}

\begin{proof}[Proof of Lemma \ref{Lmart}]
For any bounded stopping time $\nu\ge1$,
by the optional stopping theorem,
$$
0=\mathbf{E}S_{T_g\wedge\nu}
=\mathbf{E}[S_{T_g};T_g\leq\nu]+\mathbf{E}[S_{\nu};T_g>\nu].
$$
Therefore,
$$
\mathbf{E}[S_{\nu};T_g>\nu]=-\mathbf{E}[S_{T_g};T_g\leq\nu].
$$
From this equality and the definition of $Z^*_n$ we get
$$
\mathbf{E}[Z^*_{\nu}]=\mathbf{E}[(S_{\nu}-g_{\nu});T_g>\nu]=
-\mathbf{E}[S_{T_g};T_g\leq\nu]-\mathbf{E}[g_{\nu};T_g>\nu].
$$
Taking $\nu=\nu_m$ and $\nu=m$ we obtain respectively  \eqref{M1} and \eqref{M2}.
\end{proof}

\begin{proof}[ Proof of Corollary \ref{rem1}]
From \eqref{M1} (with $m:=n$) and \eqref{M2} we have
\begin{align*}
\mathbf{E}Z^*_{\nu_m}-\mathbf{E}Z^*_n&=\mathbf{E}[S_{T_g};\nu_m<T_g\le n]-\mathbf{E}[g_{\nu_m};T_g>\nu_m]+g_n\mathbf{P}(T_g>n)
\\
&=\mathbf{E}[Z_{T_g};\nu_m<T_g\le n]
+\mathbf{E}[g_{T_g}-g_{\nu_m};\nu_m<T_g\le n]\\
&\hspace{1cm}+\mathbf{E}[g_n-g_{\nu_m};T_g>n].
\end{align*}
This equality implies \eqref{M4} and the first estimate in \eqref{M3}  since $Z_{T_g}\le 0$
and $|g_k|\le G_n$ for all $k\le n$.

Similarly, using \eqref{M1} again with $m:=k$ and $m:=n$ we obtain
\begin{align*}
\mathbf{E}Z^*_k-\mathbf{E}Z^*_n&=\mathbf{E}[S_{T_g};k<T_g\le n]-g_k\mathbf{P}(T_g>k)+g_n\mathbf{P}(T_g>n)
\\
&=\mathbf{E}[Z_{T_g};k<T_g\le n]
+\mathbf{E}[g_{T_g}-g_{k};k<T_g\le n]\\
&\hspace{1cm}+(g_n-g_k)\mathbf{P}(T_g>n).
\end{align*}
This equality with $k=m$ implies the second estimate in \eqref{M3}.
In addition, for $n\ge k\ge1$,
\begin{gather*}
|\mathbf{E}Z^*_k-\mathbf{E}Z^*_n| \le2G_n \mathbf{P}(T_g>k)
+\mathbf{E}[-Z_{T_g};k<T_g\le n].
\end{gather*}
Noting that the right hand side in the last inequality is a non-increasing function of $k$ we obtain \eqref{M5}.
\end{proof}

\subsection{Upper bounds} 
It follows from \eqref{good} and the Lindeberg condition \eqref{lind.cond}  that
\begin{equation}                                                                                     \label{Up0}
\lambda_n:=\min\{\ee>0:L_n(\ee)\le\ee\}\to0
, \quad
\overline{\sigma}_n^2:=\max_{k\le n}\frac{\sigma_k^2}{B_n^2}\le2\lambda_n^2\to0
\end{equation}
and $\rho_n=3\pi_n+2G_n/B_n\to0$.
In particular,  these  relations imply
\begin{equation}                                                                                     \label{Up1}
N_1:=\max\left\{n:3\pi_n+2\frac{G_n}{B_n} +2\lambda_n^2>1/8 \right\}<\infty.
\end{equation}
Since $B_{n}^2=B_{n-1}^2+\sigma_n^2\le B_{n-1}^2+B_n^2\oo\sigma_n^2\le B_{n-1}^2+B_n^2/8$, we have
\begin{equation}                                                                                     \label{Up1+}
\sup_{n>N_1}\frac{B_n^2}{B_{n-1}^2}\le\frac{8}{7},
\qquad\sup_{n>N_1}\frac{G_n}{B_n}
\le\frac{1}{16}.
\end{equation}

In what follows symbols $N_1,N_2,\dots$ and $C_1,C_2,\dots$ denote finite positive
constants which may depend on the sequence of numbers $g=\{g_n\}$ and on the fixed
joint distribution of random variables $\{X_n\}$.

The main purpose of this
subsection is to prove the following estimates.
\begin{proposition}
\label{P.upper}
There exists an integer $N_2\ge N_1$ such that, for all $n>N_2$,
\begin{align}                                                                         \label{Up2}
B_{n}\mathbf{P}(T_g>n)< 3\mathbf{E}Z^*_n
\end{align}
and
\begin{align}                                                                        \label{Up3}
\left|\mathbf{E}Z^*_n+\mathbf{E}[S_{T_g};T_g\le n]\right|
\le 3G_n\frac{\mathbf{E}Z^*_n}{B_n}<\frac{\mathbf{E}Z^*_n}{4}.
\end{align}
In addition, for all $m,n$ such that
\begin{gather}                                                               \label{Up-cond}
n\ge m> N_2 \qquad\text{and}\qquad B_m\ge8 G_n
\end{gather}
we have
\begin{align}                                                                \label{Up5}
\mathbf{E}Z^*_m\le4\mathbf{E}Z^*_n,
\quad
\mathbf{E}Z^*_{\nu_m}\le6\mathbf{E}Z^*_n,
\quad
\mathbf{P}(T_g>\nu_m)\le20\frac{\mathbf{E}Z_{n}^*}{B_m},
\end{align}
and
\begin{align}                                                                         \label{Up6}
 \alpha_{m,n}^*
\le40(2G_n+\lambda_n B_n)\frac{\mathbf{E}Z^*_n}{B_m}.
\end{align}

\end{proposition}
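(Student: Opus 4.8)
The plan is to bootstrap from Proposition \ref{CLT-estim}, which already gives a two-sided comparison between $B_n\mathbf{P}(T_g>n)$ and $2\varphi(0)\mathbf{E}Z^*_{\nu_m}$ with an explicit error term $\alpha_{m,n}$. The central difficulty is circular: the error $\alpha_{m,n}$ involves $\mathbf{P}(T_g>\nu_m)$ and a truncated moment of $Z^*_{\nu_m}$, while the quantities we want to control ($\mathbf{E}Z^*_n$, $\mathbf{P}(T_g>n)$) sit on the other side. So the main obstacle is to set up a self-improving argument: first extract a crude a priori bound $B_n\mathbf{P}(T_g>n)\le C\,\mathbf{E}Z^*_n$ for all large $n$ with a possibly large constant $C$, then feed it back through Proposition \ref{CLT-estim} to sharpen the constant to $3$. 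I would proceed roughly as follows.

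\emph{Step 1: a crude bound via a single application.} Take $m=m(n)$ to be the largest index with $B_m\le\frac{3}{5}B_n$; by \eqref{Up1+} the ratios $B_k^2/B_{k-1}^2$ are bounded, so $B_m\asymp B_n$. For this choice the terms $2\mathbf{E}Z^*_{\nu_m}B_m^2/B_n^2$ and $\mathbf{E}[Z^*_{\nu_m};Z^*_{\nu_m}>3B_m]$ in \eqref{A5} are controlled by a fixed fraction of $\mathbf{E}Z^*_{\nu_m}$ plus $\rho_nB_n\mathbf{P}(T_g>\nu_m)$, and since $\nu_m\le m$ one has $\mathbf{E}Z^*_{\nu_m}\le\mathbf{E}Z^*_{\nu(B_m)}+\dots$ — here I would use the submartingale-type identities of Lemma \ref{Lmart} and Corollary \ref{rem1} (inequality \eqref{M3}) to compare $\mathbf{E}Z^*_{\nu_m}$, $\mathbf{E}Z^*_m$ and $\mathbf{E}Z^*_n$ up to the additive slack $2G_n\mathbf{P}(T_g>\nu_m)=o(B_n)\mathbf{P}(T_g>\nu_m)$. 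Since $\rho_n\to0$ and $G_n/B_n\to0$ by \eqref{Up0}–\eqref{Up1+}, for $n$ past some $N_2$ all the ``bad'' coefficients multiplying $B_n\mathbf{P}(T_g>\nu_m)$ are $<1/2$, and one can absorb them: this yields $B_n\mathbf{P}(T_g>n)\le 3\mathbf{E}Z^*_n$, i.e. \eqref{Up2}, and simultaneously the refined comparison $\mathbf{P}(T_g>\nu_m)\le 20\mathbf{E}Z^*_n/B_m$ once \eqref{Up-cond} is assumed (the condition $B_m\ge 8G_n$ makes the $G_n$-slack negligible relative to $\mathbf{E}Z^*_n$).

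\emph{Step 2: from the tail bound to the moment identities.} Inequality \eqref{Up3} is essentially algebraic once \eqref{Up2} is in hand: by \eqref{M1}, $\mathbf{E}Z^*_n+\mathbf{E}[S_{T_g};T_g\le n]=-g_n\mathbf{P}(T_g>n)$, so $|\mathbf{E}Z^*_n+\mathbf{E}[S_{T_g};T_g\le n]|\le G_n\mathbf{P}(T_g>n)\le 3G_n\mathbf{E}Z^*_n/B_n$ using \eqref{Up2}, and this is $<\mathbf{E}Z^*_n/4$ for $n>N_2$ because $G_n/B_n\le 1/16$ by \eqref{Up1+} (enlarging $N_2$ if necessary). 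Note this also shows $\mathbf{E}Z^*_n>0$, so all divisions are legitimate.

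\emph{Step 3: monotone-type comparisons \eqref{Up5} and the error bound \eqref{Up6}.} For the chain $\mathbf{E}Z^*_m\le 4\mathbf{E}Z^*_n\le 6\mathbf{E}Z^*_n$ and the $\mathbf{P}(T_g>\nu_m)$ estimate, I would combine \eqref{M3}/\eqref{M4} of Corollary \ref{rem1} — which give $|\mathbf{E}Z^*_{\nu_m}-\mathbf{E}Z^*_n|\le 2G_n\mathbf{P}(T_g>\nu_m)+\mathbf{E}[-Z_{T_g};\nu_m<T_g\le n]$ — with \eqref{Up2} applied at level $m$ (legitimate since $m>N_2$) to bound $\mathbf{P}(T_g>\nu_m)\le\mathbf{P}(T_g>m)\cdot(\text{something})$... more precisely use that on $\{\nu(B_m)\le m\}$ one has $Z_{\nu_m}>B_m$, so by Markov $\mathbf{P}(\nu(B_m)\le m, T_g>\nu_m)\le \mathbf{E}Z^*_{\nu_m}/B_m$, while $\mathbf{P}(\nu(B_m)>m,T_g>m)\le\mathbf{P}(T_g>m)\le 3\mathbf{E}Z^*_m/B_m$; feeding these into \eqref{M3} and solving the resulting linear inequality for $\mathbf{E}Z^*_m$ (the slack $2G_n/B_m\le 1/4$ by \eqref{Up-cond}) gives $\mathbf{E}Z^*_m\le 4\mathbf{E}Z^*_n$, hence $\mathbf{E}Z^*_{\nu_m}\le 6\mathbf{E}Z^*_n$ and $\mathbf{P}(T_g>\nu_m)\le 20\mathbf{E}Z^*_n/B_m$. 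Finally \eqref{Up6}: by definition $\alpha_{m,n}^*=2G_n\mathbf{P}(T_g>\nu_m)+\mathbf{E}[-Z_{T_g};\nu_m<T_g\le n]$; the first term is $\le 40 G_n\mathbf{E}Z^*_n/B_m$ by the $\mathbf{P}(T_g>\nu_m)$ bound, and the second is $\mathbf{E}[-Z_{T_g};\nu_m<T_g\le n]=\mathbf{E}Z^*_{\nu_m}-\mathbf{E}Z^*_n-\mathbf{E}[g_{T_g}-g_{\nu_m};\dots]-\dots$, which after using the identities from the proof of Corollary \ref{rem1} and the Lindeberg estimate $\mathbf{E}[-X_k;-X_k>\varepsilon B_k]\le$ (Lindeberg quantity, via $\lambda_n$) is bounded by $C\lambda_n B_n\mathbf{E}Z^*_n/B_m$; collecting constants gives the stated $40(2G_n+\lambda_nB_n)\mathbf{E}Z^*_n/B_m$. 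The delicate bookkeeping here — tracking why precisely the absolute constants $3,4,6,20,40$ come out — is the place where care is needed, but each step is a routine linear-inequality manipulation once \eqref{Up0}–\eqref{Up1+} guarantee the relevant coefficients are small for $n>N_2$.
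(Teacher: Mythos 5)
Your Steps 2 and 3 are essentially the paper's argument: \eqref{Up3} from \eqref{M1}, \eqref{Up2} and $G_n/B_n\le 1/16$; and \eqref{Up5}--\eqref{Up6} from the Markov-inequality split $\mathbf{P}(T_g>\nu_m)\le\mathbf{E}Z^*_{\nu_m}/B_m+\mathbf{P}(T_g>m)$, inequality \eqref{M3} applied at level $m$, \eqref{Up2} at level $m$, and a truncation-plus-independence bound on the overshoot $\mathbf{E}[-Z_{T_g};\nu_m<T_g\le n]$ (this last bound is Lemma~\ref{Lup3}; it is a fresh estimate using $-Z_{T_g}\le 2G_n-X_{T_g}$, decomposition over $\{T_g=j\}$ and optimisation over the truncation level, not "the identities from the proof of Corollary~\ref{rem1}", which would be circular). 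But all of this is conditional on \eqref{Up2}, and your Step 1 does not actually prove \eqref{Up2}. The bootstrapping from Proposition~\ref{CLT-estim} is circular as written: the right-hand side of \eqref{A5} involves $\rho_nB_n\mathbf{P}(T_g>\nu_m)$ and $\mathbf{E}Z^*_{\nu_m}$, and $\mathbf{P}(T_g>\nu_m)\ge\mathbf{P}(T_g>n)$, so a term $c\,B_n\mathbf{P}(T_g>\nu_m)$ cannot simply be "absorbed" into the left-hand side $B_n\mathbf{P}(T_g>n)$; after the Markov split you are left with a term $o(B_n)\,\mathbf{P}(T_g>m)$ at the comparable scale $B_m\asymp B_n$, and comparing $\mathbf{E}Z^*_{\nu_m}$, $\mathbf{E}Z^*_m$ with $\mathbf{E}Z^*_n$ through \eqref{M3}--\eqref{M4} again requires controlling $\mathbf{P}(T_g>\nu_m)$, $\mathbf{P}(T_g>m)$ and the overshoot relative to $\mathbf{E}Z^*_n$ --- exactly the bounds \eqref{Up2} and \eqref{Up5} you are trying to establish. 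Closing this loop would need a genuine multi-scale induction over $n$ (with careful tracking of constants, and with the crude bound $\mathbf{E}[Z^*_{\nu_m};Z^*_{\nu_m}>3B_m]\le\mathbf{E}Z^*_{\nu_m}$ already pushing the constant near $3$), which you neither formulate nor carry out; as stated, "one can absorb them" is a gap, not a proof.

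The paper avoids this circularity by proving \eqref{Up2} through a completely different, self-contained mechanism that your proposal misses: Lemma~\ref{Lup1}, a stochastic-domination inequality $\mathbf{P}(S_n>x\mid T_g>n)\ge\mathbf{P}(S_n>x)$ proved by induction on $n$, which yields Lemma~\ref{Lup2},
\begin{equation*}
\mathbf{E}Z_n^+\,\mathbf{P}(T_g>n)\le\mathbf{E}Z^*_n,
\end{equation*}
combined with the CLT and Fatou's lemma, which give $\liminf_n\mathbf{E}Z_n^+/B_n\ge\mathbf{E}W(1)^+=\varphi(0)>1/3$; hence $B_n\mathbf{P}(T_g>n)<3\mathbf{E}Z^*_n$ for all $n$ beyond an $N_2$ defined by $\mathbf{E}Z_n^+>B_n/3$. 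Proposition~\ref{CLT-estim} is never used at this stage --- in the paper's architecture it is made effective only later (Theorem~\ref{T7}), precisely because Proposition~\ref{P.upper} is needed first to control its error terms. Without Lemma~\ref{Lup1}/\ref{Lup2} (or some substitute a priori bound), your Step 1, and therefore the whole proposition, does not go through.
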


We first prove two auxiliary results. 
The following one is an easy generalisation of Lemma 7 from Greenwood and Perkins \cite{GP83}.
\begin{lemma}
\label{Lup1}
If $X_1, X_2,\ldots,X_n$ are independent and $\mathbf{P}(T_g>n)>0$ for some $n\geq1$ then
$$
\mathbf{P}(S_n> x|T_g>g_n)\geq \mathbf{P}(S_n> x)\quad \forall\ x\in\mathbb{R}.
$$
\end{lemma}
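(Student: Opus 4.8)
The statement is a positive–association inequality, and the plan is to deduce it from the fact that the two events involved are both increasing functions of the increments. For fixed $n$, write $\{T_g>n\}=\bigcap_{k=1}^{n}\{S_k>g_k\}$; this event, as well as $\{S_n>x\}$, is an up-set with respect to the coordinatewise order on $(X_1,\dots,X_n)$, because replacing each $X_i$ by a larger value can only increase every partial sum $S_k$ and hence preserves membership in either event. By the Harris (FKG) inequality for independent coordinates one gets
\begin{equation*}
\mathbf{P}(S_n>x,\,T_g>n)\ge\mathbf{P}(S_n>x)\,\mathbf{P}(T_g>n),
\end{equation*}
and dividing by $\mathbf{P}(T_g>n)>0$ yields the assertion. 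To keep things self-contained --- this is how the cited Lemma~7 of \cite{GP83} proceeds --- I would prove this last display, which is plainly equivalent to the asserted inequality, directly by induction on $n$, carrying it for an arbitrary deterministic boundary.

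For $n=1$ it reads $\mathbf{P}(X_1>x,\,X_1>g_1)\ge\mathbf{P}(X_1>x)\mathbf{P}(X_1>g_1)$, which holds since one of the two events on the left is contained in the other. For the inductive step I would condition on $X_1=y$. On $\{X_1=y\}$ the event $\{T_g>n\}$ becomes $\{y>g_1\}$ intersected with the event that the walk $S'_k:=X_2+\dots+X_{k+1}$ stays strictly above the shifted boundary $g'_k(y):=g_{k+1}-y$ for $1\le k\le n-1$ --- denote the latter by $\{T_{g'(y)}>n-1\}$ --- while $\{S_n>x\}$ becomes $\{S'_{n-1}>x-y\}$. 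Set $u(y):=\mathbf{P}(S'_{n-1}>x-y)$ and $v(y):=\mathbb{I}\{y>g_1\}\,\mathbf{P}(T_{g'(y)}>n-1)$, both non-decreasing in $y$. Applying the induction hypothesis to the walk $\{S'_k\}$ with boundary $g'(y)$ and then integrating over the law of $X_1$ (using the independence of $X_1$ from $X_2,\dots,X_n$) gives
\begin{equation*}
\mathbf{P}(S_n>x,\,T_g>n)\ge\mathbf{E}\big[u(X_1)\,v(X_1)\big]\ge\mathbf{E}[u(X_1)]\,\mathbf{E}[v(X_1)],
\end{equation*}
where the second inequality is the one-dimensional Chebyshev correlation inequality for the two non-decreasing functions $u$ and $v$. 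Since $\mathbf{E}[u(X_1)]=\mathbf{P}(S_n>x)$ and $\mathbf{E}[v(X_1)]=\mathbf{P}(T_g>n)$, the induction closes.

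There is essentially no analytic content here. It is cleanest to carry the induction with the inequality in the joint (product) form displayed above: that form is trivially true whenever $\mathbf{P}(T_g>n)=0$, so no positivity hypothesis is needed along the way, and the assumption $\mathbf{P}(T_g>n)>0$ enters only at the very end, to rewrite the conclusion as the conditional inequality in the statement. The one point that needs a line of checking is that $v$ is non-decreasing, i.e.\ that $y\mapsto\mathbf{P}(T_{g'(y)}>n-1)$ does not decrease; but this is immediate, since raising $y$ lowers the boundary $g'(y)_k=g_{k+1}-y$ pointwise and hence can only enlarge the event $\{T_{g'(y)}>n-1\}$.
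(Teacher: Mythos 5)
Your proof is correct, but it takes a genuinely different route from the paper's. The paper also argues by induction, but it peels off the \emph{last} increment: writing $\mathbf{P}(S_{n+1}>x\mid T_g>n+1)$, conditioning on $X_{n+1}=y$, noting that for $x>g_{n+1}$ the constraint $y+S_n>g_{n+1}$ is absorbed by $y+S_n>x$, using $\mathbf{P}(T_g>n)\ge\mathbf{P}(T_g>n+1)$ to drop the prefactor, and then applying the induction hypothesis for the \emph{same} walk and the \emph{same} boundary $g$; no auxiliary correlation inequality is needed. You instead peel off the \emph{first} increment, which forces you to carry the statement for an arbitrary deterministic boundary (the shifted boundary $g'_k(y)=g_{k+1}-y$) and to invoke the one-dimensional Chebyshev/FKG inequality $\mathbf{E}[u(X_1)v(X_1)]\ge\mathbf{E}[u(X_1)]\,\mathbf{E}[v(X_1)]$ for the two non-decreasing functions $u,v$; your monotonicity check for $v$ is the right (and only) delicate point. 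What your version buys: working in the unconditional product form $\mathbf{P}(S_n>x,\,T_g>n)\ge\mathbf{P}(S_n>x)\mathbf{P}(T_g>n)$ avoids dividing by probabilities until the very end, and your opening observation that both events are up-sets in $(X_1,\dots,X_n)$ gives the lemma in one line from the Harris/FKG inequality for product measures, which is more conceptual and generalizes immediately. What the paper's version buys: it is shorter, entirely elementary, and stays within the conditional formulation actually used later, at the small cost of the restriction to $x>g_{n+1}$ in the induction step (the case $x\le g_n$ being trivial, as you also note).
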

\begin{proof}
The statement of the lemma is obvious for $x\le g_n$. Therefore, we shall always assume that $x>g_n$.
We are going to use induction. If $n=1$ then, for every $x>g_1$,
$$
\mathbf{P}(S_1> x|T_g>1)=\frac{\mathbf{P}(S_1>x,T_g>1)}{\mathbf{P}(T_g>1)}
=\frac{\mathbf{P}(S_1> x)}{\mathbf{P}(T_g>1)}\geq \mathbf{P}(S_1\geq x).
$$

Assume now that the inequality holds for $n$. For every $x> g_{n+1}$ we have
\begin{align*}
&\mathbf{P}(S_{n+1}> x|T_g>n+1)\\
&\hspace{0.5cm}=\frac{\mathbf{P}(T_g>n)}{\mathbf{P}(T_g>n+1)}
\int_{\mathbb{R}}\mathbf{P}(y+S_{n}> x, y+S_n>g_{n+1} \mid T_g>n)\mathbf{P}(X_{n+1}\in dy)\\
&\hspace{0.5cm}\geq
\int_{\mathbb{R}}\mathbf{P}(y+S_{n}> x \mid T_g>n)\mathbf{P}(X_{n+1}\in dy)\\
&\hspace{0.5cm}\geq
\int_{\mathbb{R}}\mathbf{P}(y+S_{n}> x )\mathbf{P}(X_{n+1}\in dy)
=\mathbf{P}(S_{n+1}>x).
\end{align*}


Thus, the proof is finished.
\end{proof}
\begin{lemma}
\label{Lup2}
If $X_1,X_2,\ldots,X_n$ are independent for some $n\ge 1$ then
\begin{equation}                                                                                             \label{Up7}
\mathbf{E}Z_{n}^+\mathbf{P}(T_g>n) \le \mathbf{E}Z_{n}^*.
\end{equation}
\end{lemma}
\begin{proof} If $\mathbf{P}(T_g>n) =0$ then inequality \eqref{Up7} is obvious.
If $\mathbf{P}(T_g>n) >0$ then by Lemma \ref{Lup1}
\begin{align*}
\mathbf{E}[Z^*_{n}\mid T_g>n] &= \mathbf{E}[S_{n}-g_n\mid T_g>n]
=\int_0^\infty \mathbf P(S_n>g_n+x \mid T_g>n ) dx \\
&\ge \int_0^\infty \mathbf P(S_n>g_n+x ) dx   = \mathbf{E}(S_{n}-g_n)^+
= \mathbf{E}Z_n^+.
\end{align*}
Therefore,
$
\mathbf{E}Z^*_{n}\ge \mathbf{P}(T_g>n)\mathbf{E}Z_{n}^+.
$
\end{proof}
Note that Lemmas \ref{Lup1} and  \ref{Lup2} are the only Lemmas in the
Section~2 in which we do not impose all assumptions of Theorem~\ref{T1}.

\begin{proof} [Proof of Proposition \ref{P.upper}]
By the Central Limit Theorem $Z_n/B_n$ converges in distribution to $ W(1)$.
Hence, applying Fatou's Lemma,  we have
$$
\liminf_{n\to\infty}\mathbf{E}Z_{n}^+/B_n\ge\mathbf{E}W(1)^+=\int_0^\infty x\ff(x)dx=\ff(0)>1/3.
$$
From this estimate and Lemma~\ref{Lup2} we conclude that \eqref{Up2} is valid with
$$
N_2:=\max\left\{n\ge N_1:\mathbf{E}Z_{n}^+\le B_n/3\right\}<\infty.
$$
Next, the first inequality in \eqref{Up3} follows  from \eqref{Up2} and \eqref{M1}.
The second one in \eqref{Up3} is a corollary of the second bound in  \eqref{Up1+}.

Now, by the Markov inequality,
\begin{equation}\label{d1}
\mathbf{P}(Z_{\nu_n}^*>B_n)\le \frac{\mathbf{E}Z_{\nu_n}^*}{B_n}.
\end{equation}
On the other hand,
\begin{equation}\label{d2}
\mathbf{P}(Z_{\nu_m}^*\in(0,B_m])=\mathbf{P}(\nu_m=m,T_g>m)\le
\mathbf{P}(T_g>m).
\end{equation}
As $\nu_m\le m$ (see \eqref{A2}), we obtain,
by combining \eqref{d1} and \eqref{d2},
\begin{gather}                                                                                            \label{Up9}
\mathbf{P}(T_g>m)\le\mathbf{P}(T_g>\nu_m)=\mathbf{P}(Z_{\nu_m}^*>0)\le
\frac{\mathbf{E}Z_{\nu_m}^*}{B_m}+\mathbf{P}(T_g>m).
\end{gather}
Using again \eqref{Up1+} we have  from  \eqref{M3} with $m=n$ that
\begin{gather*}
\mathbf{E}Z_{\nu_m}^*\le\mathbf{E}Z_{m}^*+\frac{B_m}{8}\mathbf{P}(T_g>\nu_m) .
\end{gather*}
This fact and \eqref{Up9} yield
\begin{gather*}
\mathbf{P}(T_g>\nu_m)\le\frac{\mathbf{E}Z_{m}^*}{B_m}
+\frac{1}{8}\mathbf{P}(T_g>\nu_m)+\mathbf{P}(T_g>m) .
\end{gather*}
Hence,
\begin{gather}                                                                                           \label{Up10}
\mathbf{P}(T_g>\nu_m)\le\frac{8}{7}\frac{\mathbf{E}Z_{m}^*}{B_m}
+\frac{8}{7}\mathbf{P}(T_g>m)<5\frac{\mathbf{E}Z_{m}^*}{B_m},
\end{gather}
where the last inequality follows from  \eqref{Up2}.

From \eqref{M3}, \eqref{Up2} and \eqref{Up-cond} we obtain
\begin{gather*}
\mathbf{E}Z^*_m-\mathbf{E}Z^*_n \le2G_n \mathbf{P}(T_g>m)\ \le 6G_n\frac{\mathbf{E}Z^*_m}{B_m}
\le \frac{3}{4}\mathbf{E}Z^*_m.
\end{gather*}
This proves the first inequality in \eqref{Up5}. Similarly,
\begin{align*} 
\mathbf{E}Z^*_{\nu_m}-\mathbf{E}Z^*_n &\le2G_n \mathbf{P}(T_g>\nu_m)
\le 10G_n\frac{\mathbf{E}Z^*_m}{B_m}
\\ \nonumber
 &\le \frac{10}{8}\mathbf{E}Z^*_m \le \frac{40}{8}\mathbf{E}Z^*_n=5\mathbf{E}Z^*_n,
\end{align*} %
which implies the second estimate in \eqref{Up5}.

At last, substituting the first estimate from \eqref{Up5} into \eqref{Up10},
we obtain the third inequality in \eqref{Up5}. So, all estimates in \eqref{Up5}
are proved. Finally, the last inequality  \eqref{Up6} follows from  \eqref{M4},
the third inequality in \eqref{Up5} and from the following Lemma.
\end{proof}

\begin{lemma}                                                             \label{Lup3}
For all  $n\ge m>N_1$,
\begin{gather}                                                                                                     \label{Up11}
\beta_{m,n}^*:=\mathbf{E}[-Z_{T_g};n\ge T_g>\nu_m]
\le 40(G_n+\lambda_n B_n)\frac{\mathbf{E}Z^*_n}{B_m}.
\end{gather}
\end{lemma}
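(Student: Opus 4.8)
The plan is to control the overshoot contribution $\beta_{m,n}^*=\mathbf{E}[-Z_{T_g};n\ge T_g>\nu_m]$ by splitting it according to the size of the jump $-X_{T_g}$ that causes the boundary crossing, and by localising $T_g$ in dyadic-type blocks where $B_k$ is comparable. On the event $\{T_g=j\}$ one has $Z_{j-1}>0$ and $Z_j\le 0$, so $-Z_j\le -X_j+(g_j-g_{j-1})$; since $|g_j-g_{j-1}|\le 2G_n$ for $j\le n$, we get $-Z_j\le -X_j+2G_n$ on $\{T_g=j\}$. Summing over $\nu_m<j\le n$ gives
\begin{equation*}
\beta_{m,n}^*\le \sum_{j>\nu_m}^{n}\mathbf{E}[-X_j;T_g=j]+2G_n\,\mathbf{P}(\nu_m<T_g\le n)\le \sum_{\nu_m<j\le n}\mathbf{E}[-X_j\,\mathbb{I}\{Z_{j-1}^*>0\}] + 2G_n\,\mathbf{P}(T_g>\nu_m),
\end{equation*}
where in the first sum we used that $\{T_g=j\}\subset\{Z_{j-1}^*>0\}=\{T_g>j-1\}$ and that $-X_j$ and $\{T_g>j-1\}$ are independent. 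The second term is already of the required form after applying the third estimate in \eqref{Up5}: $2G_n\mathbf{P}(T_g>\nu_m)\le 40 G_n \mathbf{E}Z_n^*/B_m$. So the work is in the first sum.

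For the first sum I would truncate $-X_j$ at level $\lambda_n B_n$ (or at $B_j$, which is comparable up to the constant from \eqref{Up1+} once $j>N_1$, and bounded by a constant times $B_n$). Write $\mathbf{E}[-X_j;T_g>j-1]=\mathbf{E}[-X_j;T_g>j-1,-X_j\le \lambda_nB_n]+\mathbf{E}[-X_j;T_g>j-1,-X_j>\lambda_nB_n]$. The first (bounded-jump) piece is at most $\mathbf{E}[(-X_j)^2]/(\lambda_nB_n)\cdot\mathbf{P}(T_g>j-1)\le \sigma_j^2/(\lambda_nB_n)\cdot\mathbf{P}(T_g>j-1)$; summing over $j\le n$, using $\sum_{j\le n}\sigma_j^2=B_n^2$ and the upper bound $\mathbf{P}(T_g>j-1)\le \mathbf{P}(T_g>\nu_m)$ for $j-1\ge\nu_m$ (so that we can factor it out), yields a bound $\le B_n^2/(\lambda_nB_n)\cdot\mathbf{P}(T_g>\nu_m)=\lambda_n B_n\cdot(\mathbf{P}(T_g>\nu_m)/\lambda_n^2)$ — here I would instead be slightly more careful and not overshoot the constant; the honest route is to bound $\sum_{j\le n}\sigma_j^2\mathbf{P}(T_g>j-1)\le B_n^2\mathbf{P}(T_g>\nu_m)$ isn't quite right because $\mathbf{P}(T_g>j-1)$ decreases, so actually $\sum \sigma_j^2 \mathbf{P}(T_g>j-1)\le \mathbf{P}(T_g>\nu_m)\sum_{\nu_m<j\le n}\sigma_j^2\le B_n^2\mathbf{P}(T_g>\nu_m)$, giving a bounded-jump contribution $\le (B_n/\lambda_n)\mathbf{P}(T_g>\nu_m)$. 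Applying \eqref{Up5} turns this into $\le 20(B_n/\lambda_n)\mathbf{E}Z_n^*/B_m$, which is worse than claimed by a factor $1/\lambda_n$. So the truncation level must be chosen more cleverly — truncate at $B_{j-1}$, not at $\lambda_n B_n$ — so that the second moment bound reads $\sigma_j^2/B_{j-1}$, and $\sum_j \sigma_j^2/B_{j-1}$ telescopes like $\sum_j (B_j^2-B_{j-1}^2)/B_{j-1}\asymp \sum_j (B_j-B_{j-1})\asymp B_n$ (using $\oo\sigma_n^2$ small, \eqref{Up1+}); combined with the factored-out $\mathbf{P}(T_g>\nu_m)$ and \eqref{Up5} this gives the $\lambda_n B_n\cdot(\text{const})\cdot\mathbf{E}Z_n^*/B_m$ shape once one also uses $\sigma_j\le \oo\sigma_n B_n\le\sqrt2\lambda_n B_n$ on the last block.

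For the large-jump piece $\sum_{\nu_m<j\le n}\mathbf{E}[-X_j;T_g>j-1,-X_j>B_{j-1}]$, independence gives $\mathbf{P}(T_g>j-1)\,\mathbf{E}[-X_j;-X_j>B_{j-1}]$; then I bound $\mathbf{E}[-X_j;-X_j>B_{j-1}]\le \mathbf{E}[X_j^2;-X_j>B_{j-1}]/B_{j-1}\le \mathbf{E}[X_j^2;|X_j|>\lambda_n B_n]\cdot(\text{stuff})$ only on the last dyadic block, while on earlier blocks I keep $B_{j-1}$ and use the Lindeberg-type quantity $L_n(\cdot)$. More precisely, grouping $j$ into blocks where $B_j\in[2^i B_m, 2^{i+1}B_m)$ and using $\mathbf{P}(T_g>j-1)\le \mathbf{P}(T_g>\nu_m)$ together with the definition of $\lambda_n$ via \eqref{Up0} — namely $\sum_j \mathbf{E}[X_j^2;|X_j|>\lambda_n B_n]\le \lambda_n^2 B_n^2$ — I expect to collect a bound $\le C\lambda_n^2 B_n^2/B_m\cdot \mathbf{P}(T_g>\nu_m)/(\lambda_n B_n)\cdot B_n$, which after \eqref{Up5} lands in $40\lambda_n B_n\cdot\mathbf{E}Z_n^*/B_m$. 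The \textbf{main obstacle} is precisely the bookkeeping of constants and truncation levels so that the final constant is no larger than $40$ and the two error families $G_n$ and $\lambda_n B_n$ appear additively as in \eqref{Up11}: one must choose the truncation threshold comparably to $B_{j-1}$ (not to $B_n$), exploit that $B_j/B_{j-1}\le 8/7$ for $j>N_1$ so consecutive $B$'s are comparable, use monotonicity of $\mathbf{P}(T_g>\cdot)$ to factor out $\mathbf{P}(T_g>\nu_m)$, and finally invoke the third inequality in \eqref{Up5} to pass from $\mathbf{P}(T_g>\nu_m)$ to $\mathbf{E}Z_n^*/B_m$. Everything else is summation of geometric-type series.
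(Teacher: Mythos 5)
Your opening reduction $-Z_{T_g}\le -X_{T_g}+2G_n$ on $\{T_g=j\}$ matches the paper, but the next step contains a genuine error: you bound $\mathbf{E}[-X_j;T_g=j]$ by $\mathbf{E}\bigl[-X_j\,\mathbb{I}\{T_g>j-1\}\bigr]$. Since $-X_j$ is not a nonnegative integrand, enlarging the event does not give an upper bound; in fact, by the independence you invoke and $\mathbf{E}X_j=0$, your majorant is identically zero, so it cannot dominate the (positive) overshoot contribution. Concretely, $\mathbf{E}[-X_j;T_g>j-1]-\mathbf{E}[-X_j;T_g=j]=\mathbf{E}[-X_j;T_g>j]\le 0$, because conditionally on $X_1,\dots,X_{j-1}$ the event $\{T_g>j\}$ requires $X_j>g_j-S_{j-1}$ and $\mathbf{E}[-X_j;X_j>c]\le0$ for every real $c$. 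The subsequent "bounded-jump" estimate is also reversed: for $0\le -X_j\le\lambda_nB_n$ one has $X_j^2/(\lambda_nB_n)\le -X_j$, so the second-moment bound you write goes the wrong way. Finally, your attempted repairs (truncation at $B_{j-1}$, dyadic blocks, telescoping $\sum_j\sigma_j^2/B_{j-1}\asymp B_n$) cannot reach the stated inequality even if made rigorous: summed over $j$ they produce a term of order $B_n\,\mathbf{P}(T_g>\nu_m)$, which exceeds the target $\lambda_nB_n\,\mathbf{E}Z_n^*/B_m$ by exactly the factor $1/\lambda_n$ you flag, and the constant bookkeeping is left unresolved.

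The missing idea is to split the single jump $-X_{T_g}$ at level $\varepsilon B_n$ \emph{before} decomposing over deterministic times. The small part is bounded pointwise at the random time $T_g$, namely $\mathbf{E}[-X_{T_g};-X_{T_g}\le\varepsilon B_n,\,n\ge T_g>\nu_m]\le\varepsilon B_n\,\mathbf{P}(T_g>\nu_m)$, so it enters only once and is never summed over $j$. Only for the large part, whose integrand $-X_j\,\mathbb{I}\{-X_j>\varepsilon B_n\}$ is nonnegative, is it legitimate to enlarge $\{T_g=j>\nu_m\}$ to $\{T_g>j-1\ge\nu_m\}$, factor by independence, and bound $\mathbf{E}[-X_j;-X_j>\varepsilon B_n]\le\mathbf{E}[X_j^2;|X_j|>\varepsilon B_n]/(\varepsilon B_n)$; summing over $j\le n$ gives at most $L_n^2(\varepsilon)B_n\varepsilon^{-1}\mathbf{P}(T_g>\nu_m)$. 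Choosing $\varepsilon=\lambda_n$ from \eqref{Up0}, so that $L_n^2(\lambda_n)/\lambda_n\le\lambda_n$, yields $\beta_{m,n}^*\le(2G_n+2\lambda_nB_n)\mathbf{P}(T_g>\nu_m)$, and the third inequality in \eqref{Up5} then gives \eqref{Up11} with the constant $40$.
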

\begin{proof} Note that $-Z_{T_g}=-Z_{T_g-1}-g_{T_g-1}+g_{T_g}-X_{T_g}< 2G_n- X_{T_g}$ because $-Z_{T_g-1}<0$.
Hence, for any $\ee>0$,
\begin{align}                                                                                                     \label{Up12}
\beta_{m,n}^*&\le2G_n\mathbf{P}(T_g>\nu_m)+\mathbf{E}[-X_{T_g};n\ge T_g>\nu_m]
\\    \nonumber          %
&\le(2G_n+\ee B_n)\mathbf{P}(T_g>\nu_m)+\mathbf{E}[-X_{T_g};-X_{T_g}>\ee B_n , n\ge T_g>\nu_m].
\end{align}
By the definition of $\nu_m$ (see \eqref{A2}), for $2\le j\le n$ we have
\begin{align*}   \nonumber
\beta_{j,m,n}&:=\mathbf{E}[-X_{T_g};T_g=j>\nu_m, -X_{T_g}> \ee B_n]
\\\nonumber
&\le\mathbf{E}[-X_j;T_g>j-1\ge\nu_m, -X_j>\ee B_n]\\\nonumber
&=\mathbf{E}[-X_j; -X_j>\ee B_n]\mathbf{P}(T_g>j-1\ge\nu_m)
\\
&\le\mathbf{E}[-X_j; -X_j>\ee B_n]\mathbf{P}(T_g>\nu_m)\\
&\le\mathbf{E}[X_j^2; |X_j|>\ee B_n]\frac{\mathbf{P}(T_g>\nu_m)}{\ee B_n}.
\end{align*}
It follows now from \eqref{Up12} that
\begin{align*}                                                           
\beta_{m,n}^*&\le(2G_n+\ee B_n)\mathbf{P}(T_g>\nu_m)+\sum_{j=2}^n\beta_{j,m,n}
\\
&\le(2G_n+\ee B_n)\mathbf{P}(T_g>\nu_m)+
\sum_{j=2}^n\mathbf{E}[X_j^2; |X_j|>\ee B_n]\frac{\mathbf{P}(T_g>\nu_m)}{\ee B_n}
\\    \nonumber          
&\le\left(2G_n+\ee B_n+\frac{L^2_n(\ee)B_n}{\ee} \right)\mathbf{P}(T_g>\nu_m).
\end{align*}
Minimizing with respect to $\ee>0$ (see \eqref{Up0}) we obtain
\begin{gather*}        
\beta_{m,n}^*\le(2G_n+2\lambda_n B_n)\mathbf{P}(T_g>\nu_m).
\end{gather*}
combining this with the last inequality in \eqref{Up5}, we obtain \eqref{Up11}.
\end{proof}
\subsection{Rate of convergence in Theorem \ref{T2}} 

We are going to prove Theorem~\ref{T2} and obtain the following rate of convergence in \eqref{main}.
\begin{theorem}                                                                                  \label{T7}
Under assumptions of Theorem \ref {T1} asymptotics \eqref{main} hold with function $U_g$ defined in \eqref{trivial} which is  slowly varying. Moreover, for all $n\ge1$
\begin{align}                                                                                                                       \label{oo2}
\alpha^*_n:=\left|B_n\frac{\mathbf{P}(T_g>n)}{\mathbf{E}Z^*_n}-2\ff(0)\right|
\le C_1\left(\rho_n^{2/3}+\lambda_n^{1/2}\right)\to0
\end{align}
for some $C_1<\infty$.
\end{theorem}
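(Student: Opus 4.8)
The plan is to feed the a~priori bounds of Proposition~\ref{P.upper} into the CLT-estimate \eqref{A5} for a well-chosen auxiliary index $m=m_n$, and then to optimise in $m$. Since $|B_n\mathbf{P}(T_g>n)-2\varphi(0)\mathbf{E}Z^*_n|\le\alpha_{m,n}+2\varphi(0)\alpha^*_{m,n}$ by \eqref{M4}, inserting the bounds $\mathbf{P}(T_g>\nu_m)\le20\,\mathbf{E}Z^*_n/B_m$ and $\mathbf{E}Z^*_{\nu_m}\le6\,\mathbf{E}Z^*_n$ of \eqref{Up5}, together with \eqref{Up6} and $G_n/B_n\le\rho_n$, into \eqref{A5} gives, after dividing by $\mathbf{E}Z^*_n$, that for all $m,n$ obeying \eqref{A4} and \eqref{Up-cond}
$$
\alpha_n^*\ \le\ C\Big[(\rho_n+\lambda_n)\tfrac{B_n}{B_m}+\tfrac{B_m^2}{B_n^2}\Big]+\frac{\mathbf{E}[Z^*_{\nu_m};Z^*_{\nu_m}>3B_m]}{\mathbf{E}Z^*_n}.
$$

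The heart of the matter — and the step I expect to be hardest — is the overshoot term. On the event $\{Z^*_{\nu_m}>3B_m\}$ one has $Z_{\nu_m-1}\in(0,B_m]$, so once $B_m\ge8G_n$ (which \eqref{Up-cond} provides) the single increment that crosses the level satisfies $X_{\nu_m}>\tfrac74B_m$ and $Z_{\nu_m}\le2X_{\nu_m}$; decomposing over the value of $\nu_m$ and using that $\{Z_1,\dots,Z_{j-1}\in(0,B_m]\}$ is $\mathcal F_{j-1}$-measurable, hence independent of $X_j$, yields
$$
\mathbf{E}[Z^*_{\nu_m};Z^*_{\nu_m}>3B_m]\ \le\ 2\sum_{j=1}^{m}\mathbf{P}(T_g>j-1)\,\mathbf{E}\big[X_j;X_j>\tfrac74B_m\big].
$$
I would then use $\mathbf{P}(T_g>j-1)\le C\,\mathbf{E}Z^*_{j-1}/B_{j-1}$ (from \eqref{Up2}, the finitely many initial indices treated crudely), replace $\mathbf{E}Z^*_{j-1}$ by $4\,\mathbf{E}Z^*_n$ via \eqref{Up5} for all $j$ with $B_{j-1}\ge8G_n$, bound $\mathbf{E}[X_j;X_j>\tfrac74B_m]\le\tfrac{4}{7B_m}\mathbf{E}[X_j^2;|X_j|>\tfrac74B_m]$, and — choosing $\tfrac74B_m\ge\lambda_nB_n$ — feed the truncated second moments into $\sum_k\mathbf{E}[X_k^2;|X_k|>\lambda_nB_n]=B_n^2L_n^2(\lambda_n)\le\lambda_n^2B_n^2$. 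Dividing by $\mathbf{E}Z^*_n$, which stays bounded away from $0$ for large $n$ (by \eqref{Up3} one has $\mathbf{E}Z^*_n\asymp\mathbf{E}[-S_{T_g};T_g\le n]$, and the right-hand side is bounded below by a positive constant for $n$ large), and using $\lambda_n^{5/6}B_n\to\infty$ (a consequence of \eqref{lind.cond} and $B_n\to\infty$), this whole contribution is seen to be at most $C(\rho_n^{2/3}+\lambda_n^{1/2})$; this block is precisely where the exponent $\tfrac23$ degrades to $\tfrac12$.

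It remains to choose $m$: let $m=m_n$ be the largest index with $B_m\le(\rho_n+\lambda_n)^{1/3}B_n$, so that $B_m\asymp(\rho_n+\lambda_n)^{1/3}B_n$ by \eqref{Up1+}, while \eqref{A4} and \eqref{Up-cond} hold for all large $n$ because $(\rho_n+\lambda_n)^{1/3}\to0$ and $B_m/G_n\gtrsim\rho_n^{-2/3}\to\infty$. This balances the two bracketed terms to $(\rho_n+\lambda_n)^{2/3}\le\rho_n^{2/3}+\lambda_n^{2/3}$, and, after enlarging $C_1$ to absorb the finitely many remaining $n$, gives \eqref{oo2}.

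Finally, \eqref{oo2} yields $B_n\mathbf{P}(T_g>n)=2\varphi(0)\mathbf{E}Z^*_n(1+o(1))$, that is \eqref{main} with $U_g(B_n^2)=\mathbf{E}Z^*_n$ (recall $2\varphi(0)=\sqrt{2/\pi}$), which is positive by \eqref{Up2} and \eqref{must}; \eqref{main+} then follows from \eqref{M1} and \eqref{Up3}; and the interpolated $U_g$ of \eqref{trivial} is slowly varying because \eqref{M5} and \eqref{Up6} force $\mathbf{E}Z^*_k/\mathbf{E}Z^*_n\to1$ uniformly over those $k$ with $B_k\ge\delta B_n$, for each fixed $\delta\in(0,1)$, while $\sigma_{k+1}^2/B_{k+1}^2\to0$ by \eqref{Up0}.
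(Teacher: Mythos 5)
Your reduction of \eqref{oo2} to the three terms of \eqref{A5} via \eqref{M4}, \eqref{Up5}, \eqref{Up6}, and your choice of $m$ with $B_m\asymp(\rho_n+\lambda_n)^{1/3}B_n$, reproduce the paper's Lemma~\ref{over3}; the concluding derivation of \eqref{main}, \eqref{main+} and of the slow variation of $U_g$ from \eqref{M5}, \eqref{Up6} is also in line with the paper. The genuine gap sits exactly in the block you flag as hardest, the overshoot term $\beta_m=\mathbf{E}[Z^*_{\nu_m};Z^*_{\nu_m}>3B_m]$. Your bound reduces it to $\frac{C}{B_m}\sum_{j\le m}\mathbf{P}(T_g>j-1)\,\mathbf{E}[X_j^2;|X_j|>\tfrac74B_m]$, and you then (a) control $\mathbf{P}(T_g>j-1)$ through $\mathbf{E}Z^*_{j-1}\le 4\mathbf{E}Z^*_n$ only for $B_{j-1}\ge 8G_n$, treating the remaining indices ``crudely'', and (b) sum the truncated second moments against a constant prefactor via $\sum_k\mathbf{E}[X_k^2;|X_k|>\lambda_nB_n]\le\lambda_n^2B_n^2$. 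Neither step closes. For (a), the excluded range $\{j:\,B_{j-1}<8G_n\}$ is not a fixed finite set: $G_n$ may grow with $n$ (any $G_n=o(B_n)$ is allowed), and on that range you have no comparison of $\mathbf{E}Z^*_{j-1}$ with $\mathbf{E}Z^*_n$; your fallback, that $\mathbf{E}Z^*_n$ stays bounded away from $0$, is not a consequence of the hypotheses of Theorem~\ref{T1} — the paper proves such a lower bound only under $\overline g<\infty$ (Lemma~\ref{L+1}, Theorem~\ref{T3}), and for boundaries increasing to $+\infty$ with $g_n=o(B_n)$ the slowly varying $U_g(B_n^2)=\mathbf{E}Z^*_n$ may tend to $0$, so dividing a contribution that is not proportional to $\mathbf{E}Z^*_n$ by $\mathbf{E}Z^*_n$ is fatal. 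For (b), even granting $\mathbf{E}Z^*_{j-1}\le C\,\mathbf{E}Z^*_n$ for all $j$, your estimate leaves a term of order $\lambda_n^2B_n^2/B_m\ge\lambda_n^2B_n$ after dividing by $\mathbf{E}Z^*_n$, and this need not vanish: the Lindeberg condition carries no rate, so $\lambda_n$ may decay arbitrarily slowly; the relation $\lambda_n^{5/6}B_n\to\infty$ you invoke points in the wrong direction and cannot make this term small.

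The paper closes this block with two ingredients you are missing. First, \emph{before} the overshoot estimate it proves (Lemma~\ref{over4}) that $U_g$ is slowly varying and, via Seneta's function $V_{1/3}(t)=\max_{x\le t}x^{1/3}U_g(x)/t^{1/3}\sim U_g(t)$, the uniform bound \eqref{oo15}: $\mathbf{P}(T_g>j-1)\le C_2\,\mathbf{E}Z^*_n\,B_n^{1/3}/B_j^{4/3}$ for \emph{all} $j\in[1,n]$ — valid down to $j=1$, uniform in $n$, and proportional to $\mathbf{E}Z^*_n$, which removes both the small-$j$ problem and any need for a lower bound on $\mathbf{E}Z^*_n$. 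Second, with $v_j=\mathbf{E}[X_j^2;|X_j|>B_m]$ and $V_j=\sum_{k\le j}v_k\le B_j^2$, it uses the comparison $\sum_{j\le m}v_j/V_j^{2/3}\le 3V_m^{1/3}$, which turns the sum into $L_m^{2/3}(1)$ rather than $\lambda_n^2B_n^2/B_m$; the choice of $m(n)$ with $B_{m(n)}\ge\lambda_n^{1/4}B_n$ then yields $L_{m(n)}^{2/3}(1)\le\lambda_n^{1/2}$, i.e.\ $\beta_{m(n)}\le C\,\mathbf{E}Z^*_n\lambda_n^{1/2}$ (Lemma~\ref{over5} and the end of the proof of Theorems~\ref{T2} and \ref{T7}). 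Without these two steps your treatment of the overshoot does not go through, so the proposal has a genuine gap at its central point.
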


We split the proof 
into several steps.
Define
\begin{align}                                                                                                    \label{oo1}
m(n)&:=\min\left\{k\ge 1:B_k^2\ge (\rho_n^{2/3}+\lambda_n^{1/2})B_n^2 \right\}
\end{align}
and
\begin{align}                                                                                                   \label{oo1+}
N_3&:=\max\left\{n\ge N_2:\rho_n^{2/3}+\lambda_n^{1/2}+2\lambda_n^2>\left(3/5\right)^2\right\}<\infty.
\end{align}

\begin{lemma}                                                              \label{over3}
If $n>N_3$ then the number $m=m(n)$ defined in \eqref{oo1} satisfies
conditions \eqref{A4} and \eqref{Up-cond}. In addition, for all $n>N_3$,
\begin{gather}                                                                                                     \label{oo5}
\alpha_{n}^*\le72\left(\rho_n^{2/3}+\lambda_n^{1/2}\right)
+\frac{\beta_{m(n)}}{\mathbf{E}Z^*_n},
\end{gather}
where $\beta_m:=\mathbf{E}[Z^*_{\nu_m};Z^*_{\nu_m}> 3B_m]$.
\end{lemma}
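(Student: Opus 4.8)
The plan is to verify the two sets of inequalities requested and then to assemble the bound on $\alpha_n^*$ by applying Proposition \ref{CLT-estim} and the estimates of Proposition \ref{P.upper} with the specific choice $m=m(n)$ from \eqref{oo1}. First I would check the conditions \eqref{A4} and \eqref{Up-cond}. By the definition \eqref{oo1} of $m(n)$ and the minimality of $m(n)$, we have $B_{m(n)-1}^2<(\rho_n^{2/3}+\lambda_n^{1/2})B_n^2$, and then $B_{m(n)}^2=B_{m(n)-1}^2+\sigma_{m(n)}^2\le B_{m(n)-1}^2+\overline\sigma_n^2B_n^2\le(\rho_n^{2/3}+\lambda_n^{1/2}+2\lambda_n^2)B_n^2$ using \eqref{Up0}. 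For $n>N_3$ the right-hand factor is $\le(3/5)^2$ by \eqref{oo1+}, which gives $B_{m(n)}\le\tfrac35 B_n$, i.e.\ \eqref{A4} (and $m(n)<n$ since $(3/5)^2<1$). Since $N_3\ge N_2$, the requirement $m>N_2$ in \eqref{Up-cond} also needs $m(n)>N_2$; this follows because $B_{m(n)}^2\ge(\rho_n^{2/3}+\lambda_n^{1/2})B_n^2$ forces $m(n)\to\infty$ as $n\to\infty$, so after possibly enlarging $N_3$ we have $m(n)>N_2$ for $n>N_3$. For the second part of \eqref{Up-cond}, namely $B_{m(n)}\ge 8G_n$: since $\rho_n\ge 2G_n/B_n$ we have $G_n/B_n\le\rho_n/2\le\tfrac12\rho_n^{2/3}$ (for $n>N_1$, where $\rho_n<1$), hence $64G_n^2\le 16\rho_n^{4/3}B_n^2\le\rho_n^{2/3}B_n^2\le B_{m(n)}^2$ once $\rho_n^{2/3}\le 1/16$, which again holds for $n$ large; absorb this into $N_3$.

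Next I would bound $\alpha_n^*$. Write $B_n\mathbf P(T_g>n)-2\ff(0)\mathbf E Z_n^*=\big(B_n\mathbf P(T_g>n)-2\ff(0)\mathbf E Z_{\nu_m}^*\big)+2\ff(0)\big(\mathbf E Z_{\nu_m}^*-\mathbf E Z_n^*\big)$ with $m=m(n)$. The first bracket is $\alpha_{m,n}$, estimated by Proposition \ref{CLT-estim}; the second is controlled by \eqref{M4}, i.e.\ it is $\le\alpha_{m,n}^*$ up to the factor $2\ff(0)<1$. Dividing through by $\mathbf E Z_n^*$ and using the estimates of Proposition \ref{P.upper}, each term on the right of \eqref{A5} and each term of $\alpha_{m,n}^*$ turns into a multiple of $\mathbf E Z_n^*/B_m$ times an explicit small quantity, with the sole exception of the overshoot term $\mathbf E[Z_{\nu_m}^*;Z_{\nu_m}^*>3B_m]=\beta_m$, which by definition is left as $\beta_{m(n)}/\mathbf E Z_n^*$. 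Concretely: $\rho_nB_n\mathbf P(T_g>\nu_m)/\mathbf E Z_n^*\le 20\rho_n B_n/B_m$ by the third inequality in \eqref{Up5}; $2\mathbf E Z_{\nu_m}^*B_m^2/(B_n^2\mathbf E Z_n^*)\le 12 B_m^2/B_n^2\le 12(\rho_n^{2/3}+\lambda_n^{1/2}+2\lambda_n^2)$ by the second inequality in \eqref{Up5} and the bound on $B_m^2/B_n^2$ derived above; and $\alpha_{m,n}^*/\mathbf E Z_n^*\le 40(2G_n+\lambda_nB_n)/B_m$ by \eqref{Up6}. Finally I would convert the surviving $1/B_m$ factors into powers of $\rho_n$ and $\lambda_n$: from $B_m^2\ge(\rho_n^{2/3}+\lambda_n^{1/2})B_n^2\ge\rho_n^{2/3}B_n^2$ we get $B_n/B_m\le\rho_n^{-1/3}$, so $\rho_nB_n/B_m\le\rho_n^{2/3}$ and $G_n B_n/(B_n B_m)\le\tfrac12\rho_n\cdot\rho_n^{-1/3}=\tfrac12\rho_n^{2/3}$; likewise from $B_m^2\ge\lambda_n^{1/2}B_n^2$ we get $B_n/B_m\le\lambda_n^{-1/4}$, so $\lambda_nB_n/B_m\le\lambda_n^{3/4}\le\lambda_n^{1/2}$ (for $\lambda_n\le1$). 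Adding up all contributions with their numerical constants and using $2\lambda_n^2\le\lambda_n^{1/2}$ (for $\lambda_n$ small) yields a bound of the form $72(\rho_n^{2/3}+\lambda_n^{1/2})+\beta_{m(n)}/\mathbf E Z_n^*$, which is \eqref{oo5}; tracking constants carefully, the numerical coefficient $72$ is a safe over-estimate of $20+12+80+\dots$ after the reductions, and one simply takes $N_3$ large enough that all the "for $n$ large" provisos hold simultaneously.

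The main obstacle is not any single estimate but the bookkeeping: one must make sure the single choice $m=m(n)$ simultaneously satisfies \eqref{A4}, both parts of \eqref{Up-cond}, and produces a $1/B_m$ decay that is strong enough to beat $\rho_n B_n$ and $\lambda_n B_n$ while $B_m$ is still small enough that $B_m^2/B_n^2$ is itself $o(1)$ — this is exactly why the exponent $2/3$ on $\rho_n$ and $1/2$ on $\lambda_n$ appear (they balance $\rho_n\cdot\rho_n^{-1/3}$ against $\rho_n^{2/3}$, and $\lambda_n\cdot\lambda_n^{-1/4}$ against $\lambda_n^{1/2}$). The overshoot term $\beta_{m(n)}$ is deliberately not estimated here; it is isolated as the quantity whose smallness will be established separately (this is the content of the "finiteness of the overshoot expectation" step of the universality programme described in the introduction), and only after that is fed back in to finish the proof of \eqref{oo2} and hence of Theorem \ref{T7}.
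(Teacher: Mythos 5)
Your proposal is correct and takes essentially the same route as the paper: verify \eqref{A4} and \eqref{Up-cond} for $m=m(n)$ via \eqref{Up0}, \eqref{Up1} and \eqref{oo1+}, then combine Proposition~\ref{CLT-estim}, the bound \eqref{M4} and Proposition~\ref{P.upper}, converting the surviving $B_n/B_{m(n)}$ factors into $\rho_n^{-1/3}$ and $\lambda_n^{-1/4}$ to reach \eqref{oo5} with the constant $72$. The only (harmless) deviation is that your checks of $B_{m(n)}\ge 8G_n$ and $m(n)>N_2$ require enlarging $N_3$, whereas the paper's chain $B_{m(n)}\ge\rho_n^{1/3}B_n=\rho_nB_n/\rho_n^{2/3}\ge4\rho_nB_n\ge8G_n$ already works for $\rho_n\le1/8$, i.e.\ for all $n>N_1$.
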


\begin{proof}
First, consider integers $m,n$ which satisfy conditions \eqref{A4} and \eqref{Up-cond}.
Comparing definitions \eqref{A5} and \eqref{oo2} we obtain
\begin{gather*}                                                                       
\alpha_{n}^*\mathbf{E}Z^*_n\le \alpha_{m,n}
+2\ff(0)\left|\mathbf{E}Z^*_{\nu_m}-\mathbf{E}Z^*_n\right|
\le \beta_{m}+\delta_{m,n},
\end{gather*}
where, using \eqref{A5} and \eqref{M4} , we have
\begin{gather*}                                                                                                     
\delta_{m,n}=2{\mathbf{E}}Z_{\nu_m}^*\frac{B_m^2}{B_n^2}+\rho_nB_n{\mathbf{P}}(T_g>\nu_m)
+ \alpha_{m,n}^*.
\end{gather*}
Now from estimates \eqref{Up5} and \eqref{Up6} we obtain
\begin{gather}                                                                                                     \label{oo8}
\delta_{m,n}\le12\mathbf{E}Z_n^*\frac{B_m^2}{B_n^2}
+(20\rho_n+40\rho_n+40\lambda_n)\mathbf{E}Z^*_n\frac{B_n}{B_m}
\end{gather}
since $2G_n/B_n<\rho_n$.

Second, consider integer $m=m(n)$ from \eqref{oo1} with $n>N_3$.
We have from \eqref{Up0} and \eqref{oo1+} that
\begin{gather}                                                                                                    \label{oo11}
B_{m(n)}^2=B_{m(n)-1}^2+\sigma_{m(n)}^2
\le(\rho_n^{2/3}+\lambda_n^{1/2})B_n^2+2\lambda_n^2B_n^2
\le \left(3/5\right)^2B_n^2.
\end{gather}
 So, condition \eqref{A4} is fulfilled in this case.
Furtehrmore, it follows from \eqref{Up1} that $2G_n/B_n<\rho_n\le1/8$ for $n>N_3\ge N_1$.
Hence, by \eqref{oo1},
$$
B_{m(n)}\geq \sqrt[3]{\rho_n}B_n=\rho_nB_n/\rho_n^{2/3}\ge4\rho_nB_n
\ge4(2G_b/B_n)B_n=8G_n.
$$
So, $m(n)$ satisfies also the condition \eqref{Up-cond} and we may apply
Proposition \ref{P.upper}. Since $B_{m(n)}\ge\sqrt[3]{\rho_n}B_n$
and $B_{m(n)}\ge\sqrt[4]{\lambda_n}B_n$, we have from \eqref{Up6} and
\eqref{oo8} the bound
\begin{gather*}                                                                                                     
\delta_{m,n}\le12\mathbf{E}Z_n^*\frac{B_m^2}{B_n^2}
+(60\rho_n^{2/3}+40\lambda_n^{3/4})\mathbf{E}Z_n^*.
\end{gather*}
And, thus, by \eqref{oo11},
\begin{gather*}                                                                                                     
\delta_{m,n}\le(72\rho_n^{2/3}
+12\lambda_n^{1/2}+24\lambda_n^2+40\lambda_n^{3/4})\mathbf{E}Z_n^*.
\end{gather*}
So, \eqref{oo5} follows immediately because $\lambda_n\le1/4$ by \eqref{Up1}.
\end{proof}

\begin{lemma}                                                              \label{over4}
Function $U_g$ is slowly varying. In addition, there exists a constant $C_2<\infty$ such that
\begin{gather}                                                                 \label{oo15}
\mathbf{P}(T_g>j-1)\le  C_2\mathbf{E}Z^*_n\frac{B_n^{1/3}}{B_j^{4/3}}
\quad\text{for all }j\in[1,n].
\end{gather}

\end{lemma}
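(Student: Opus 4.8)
The statement bundles two claims: first that $U_g$ is slowly varying, and second the pointwise bound \eqref{oo15}. For the slow variation I would start from \eqref{oo2} of Theorem~\ref{T7} (which we are allowed to use, since the current lemma sits inside its proof chain — in fact the slow variation of $U_g$ is precisely the qualitative consequence we want to extract before finishing \eqref{oo2}). Recall that by \eqref{main+} and \eqref{trivial} the function $U_g(B_n^2)$ equals $\mathbf{E}Z^*_n$ and is extended by linear interpolation in between. So it suffices to show that $\mathbf{E}Z^*_n/\mathbf{E}Z^*_m\to 1$ whenever $B_n^2/B_m^2\to 1$ with $m,n\to\infty$. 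For this I would combine the two-sided comparison estimates of Corollary~\ref{rem1}: \eqref{M5} gives $|\mathbf{E}Z^*_k-\mathbf{E}Z^*_n|\le \alpha^*_{m,n}$ for $m\le k\le n$, and by \eqref{Up6} in Proposition~\ref{P.upper} we have $\alpha^*_{m,n}\le 40(2G_n+\lambda_nB_n)\mathbf{E}Z^*_n/B_m$, whose ratio to $\mathbf{E}Z^*_n$ tends to $0$ since $G_n=o(B_n)$, $\lambda_n\to0$ and $B_m/B_n$ is bounded below. Hence $\mathbf{E}Z^*_m/\mathbf{E}Z^*_n\to1$, which is exactly the statement that $n\mapsto \mathbf{E}Z^*_n$ varies slowly along the scale $B_n^2$; the linear interpolation \eqref{trivial} does not destroy this because it produces values trapped between consecutive $\mathbf{E}Z^*_k$'s and because $B_k^2/B_{k-1}^2\to1$ by \eqref{Up0}.

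**The estimate \eqref{oo15}.** For the second claim I would argue in two regimes. For $j$ in a bounded range (say $j\le N_3$ or any fixed $N$) the left side $\mathbf{P}(T_g>j-1)\le 1$ is bounded, while $\mathbf{E}Z^*_n\to U_g(\infty)$-type growth — more precisely $B_n^{1/3}/B_j^{4/3}$ stays bounded below and $\mathbf{E}Z^*_n$ is bounded below by a positive constant for large $n$ (from \eqref{Up2}, since $\mathbf{P}(T_g>n)>0$ by \eqref{must}, together with the fact that $B_n\mathbf{P}(T_g>n)$ cannot go to zero — it is comparable to $\mathbf{E}Z^*_n$ which is non-decreasing up to bounded perturbations). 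So the bound holds for $j$ in a bounded set after enlarging $C_2$. For $j>N_3$ I would apply Proposition~\ref{P.upper} with the pair $(m,n)$ replaced by $(j,n)$ — but note the condition $B_j\ge 8G_n$ in \eqref{Up-cond} is the obstruction: it need not hold for small $j$. The fix is to use the crude estimate $\mathbf{P}(T_g>j-1)\le \mathbf{P}(T_g>\nu_j)\le 20\,\mathbf{E}Z^*_n/B_j$ from the third inequality in \eqref{Up5} whenever $(j,n)$ satisfies \eqref{Up-cond}; this already gives $\mathbf{P}(T_g>j-1)\le C\,\mathbf{E}Z^*_n/B_j$, and since $B_n\ge B_j$ we trivially have $1/B_j\le B_n^{1/3}/B_j^{4/3}$, yielding \eqref{oo15} in that regime with room to spare.

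**The troublesome middle regime** is where $j>N_3$ but $B_j<8G_n$, i.e. $j$ is moderately large but $n$ is so much larger that the boundary-size term $G_n$ dominates $B_j$. Here I would not use \eqref{Up5} directly; instead I would bound $\mathbf{P}(T_g>j-1)\le 1$ and show $\mathbf{E}Z^*_n B_n^{1/3}/B_j^{4/3}\ge 1$ for the relevant $j,n$. Indeed $B_j<8G_n$ means $B_j^{4/3}<(8G_n)^{1/3}B_j<C G_n^{1/3}B_n^{1/3}\cdot(B_j/B_n)^{\,0}$ — more carefully, $B_j^{4/3}\le (8G_n)^{4/3}$, and since $\mathbf{E}Z^*_n/B_n\ge \rho_n\cdot$const$\,\ge\,$const$\cdot G_n/B_n$ is NOT guaranteed, I would instead recall $\mathbf{E}Z^*_n\ge c B_n$ fails in general — so the right move is: in this regime $B_n^{1/3}/B_j^{4/3}\ge B_n^{1/3}/(8G_n)^{4/3}$, and using $G_n=o(B_n)$ this is $\ge B_n^{1/3}/(o(B_n))^{4/3}\to\infty$ faster than any negative power, so it dominates $1/\mathbf{E}Z^*_n$ for $n$ large; for $n$ in a bounded set one again absorbs everything into $C_2$. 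The genuine work — and the step I expect to take the most care — is checking the book-keeping at the three boundaries between these regimes so that a single finite constant $C_2$ works uniformly for all $1\le j\le n$ and all $n$; this is routine once the regimes are set up as above, using only \eqref{Up0}, \eqref{Up1+}, \eqref{Up5} and the lower bound $\mathbf{E}Z^*_n\gtrsim 1$ coming from \eqref{Up2} and \eqref{must}.
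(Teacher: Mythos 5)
Your first half (slow variation) is essentially the paper's own argument: it also combines \eqref{M5} with \eqref{Up6} to get $\max_{m\le k\le n}|\mathbf{E}Z^*_k-\mathbf{E}Z^*_n|\le \alpha^*_{m,n}=o(\mathbf{E}Z^*_n)$. One caveat: the reduction you state — ``it suffices that $\mathbf{E}Z^*_n/\mathbf{E}Z^*_m\to1$ whenever $B_n^2/B_m^2\to1$'' — is not a correct criterion for slow variation (the function $U(t)=t$ satisfies it); what you need, and what your estimate does deliver once you apply it with $B_m^2\sim\lambda B_n^2$ for each fixed $\lambda\in(0,1)$ (then \eqref{Up-cond} holds eventually because $G_n=o(B_n)$), is the fixed-ratio comparison $U_g(\lambda t)/U_g(t)\to1$. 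So this half is fixable and close to the paper, which works with $m=m(n)$ and $B_{m(n)}/B_n\to0$.

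The second half has a genuine gap, precisely in the ``middle regime'' you isolate. The paper's key step, which your argument never produces, is to convert slow variation of $U_g$ into the uniform comparison \eqref{oo16}, $B_j^{1/3}\mathbf{E}Z^*_j\le C_3\,B_n^{1/3}\mathbf{E}Z^*_n$ for all $j\le n$, via the standard property that $V_a(t):=t^{-a}\max_{x\le t}x^aU_g(x)$ is slowly varying with $V_a\sim U_g$; combined with $\mathbf{P}(T_g>j-1)\le 3\mathbf{E}Z^*_{j-1}/B_{j-1}$ from \eqref{Up2} this gives \eqref{oo15} in one stroke, for small $j$ as well. Your substitute for the regime $j>N_3$, $B_j<8G_n$ fails on two counts. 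First, the claim $B_n^{1/3}/(8G_n)^{4/3}\to\infty$ is false: this quantity equals $(B_n/(8G_n))^{4/3}/B_n$, which tends to $0$ when, say, $G_n\asymp B_n/\log B_n$ (perfectly admissible under \eqref{good}). Second, the lower bound $\mathbf{E}Z^*_n\gtrsim 1$ is not available in general: $U_g$ is only slowly varying and may tend to $0$ unless $\overline g<\infty$ (that is the content of Theorem~\ref{T3} and Lemma~\ref{L+1}, proved later and under an extra hypothesis). In fact, with $g_n\asymp B_n/\log B_n$ and $B_j\asymp G_n$ the right-hand side of \eqref{oo15} is of order $\mathbf{E}Z^*_n(\log B_n)^{4/3}/B_n\to0$, while you bound the left-hand side by $1$; there $j$ is nearly as large as $n$, $\mathbf{P}(T_g>j-1)\asymp\mathbf{E}Z^*_{j-1}/B_{j-1}$ is genuinely small, and \eqref{oo15} amounts exactly to the comparison $\mathbf{E}Z^*_j\le C(B_n/B_j)^{1/3}\mathbf{E}Z^*_n$, i.e.\ to \eqref{oo16}. (A smaller quibble: $\mathbf{P}(T_g>j-1)\le\mathbf{P}(T_g>\nu_j)$ need not hold, since $\nu_j$ may equal $j$; in the good regime one should instead use \eqref{Up2} at $j-1$ together with the first inequality in \eqref{Up5}.) So the proposal is missing the one idea the lemma actually turns on.
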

\begin{proof}
First note that, by \eqref{Up6} and \eqref{oo1},
\begin{align*}
\frac{\alpha_{m,n}^*}{\mathbf{E}Z^*_{n}}
&\le 40\frac{2G_n+\lambda_nB_n}{B_{m(n)}}
\le40\frac{(\rho_n+\lambda_n)B_n}{B_{m(n)}}
\le40(\rho_n^{2/3}+\lambda_n^{3/4}).
\end{align*}
Then, combining \eqref{trivial} and \eqref{M5}, we have
\begin{gather*}
\sup_{t\in[B^2_{m(n)},B^2_n]}\left|\frac{U_g(t)}{U_g(B_n^2)}-1\right|
=\max_{m(n)\leq k\leq n}\left|\frac{\mathbf{E}Z^*_{k}}{\mathbf{E}Z^*_{n}}-1\right|
\le\frac{\alpha_{m,n}^*}{\mathbf{E}Z^*_{n}}
\le 40(\rho_n^{2/3}+\lambda_n^{3/4}) \to 0.
\end{gather*}
In particular, $U_g$ is slowly varying since $B_{m(n)}/B_n\to0$.

By a property of slowly varying functions (see, for example,
\cite[p.20]{Sen76})
for every $a>0$ the function
\begin{gather*}
V_a(t):=\frac{\max_{0\le x\leq t}x^aU_g(x)}{t^a}
\end{gather*}
is also slowly varying and $V_a(t)\sim U_g(t)$ as $t\to\infty$. Taking $a=1/3$,
we conclude that
\begin{gather}                                                                                                    \label{oo16}
\max_{1\le k\le n}\frac{B_k^{1/3}\mathbf{E}Z^*_{k}}{B_n^{1/3}\mathbf{E}Z^*_{n}}
\le C_3:=\sup_{t\ge B^2_1}\frac{V_{1/3}(t)}{U_g(t)}<\infty
\quad\text{for all }n\geq1,
\end{gather}
due to the fact that $U_g(t)>0$ for $t\ge B^2_1$.

First, if $n\ge j-1>N_2$ we have from  \eqref{Up2} and  \eqref{oo16} that
$$
\mathbf{P}(T_g>j-1)\le \frac{3 \mathbf{E}Z^*_{j-1}}{B_{j-1}}
\le\frac{3 C_3B_{n}^{1/3}\mathbf{E}Z^*_{n}}{B_{j-1}^{1+1/3}}
\le\left(\frac87\right)^{4/3}\frac{3 C_3B_{n}^{1/3}\mathbf{E}Z^*_{n}}{B_{j}^{4/3}}.
$$
Here we also used \eqref{Up1+}.
Second, for $N_2\ge j-1>0$ we have
$$
\mathbf{P}(T_g>j-1)=\frac{B_j\mathbf{P}(T_g>j-1)}{ \mathbf{E}Z^*_j}
\frac{ \mathbf{E}Z^*_j}{B_j}
\le\frac{B_j}{ \mathbf{E}Z^*_j}
\frac{ C_3B_n^{1/3}\mathbf{E}Z^*_n}{B_j^{4/3}}.
$$
At last, for $j=1$ we have
$$
\mathbf{P}(T_g>0)=\frac{B_1\mathbf{P}(T_g\ge1)}{ \mathbf{E}Z^*_{1}}
\frac{ \mathbf{E}Z^*_{1}}{B_1}
\le\frac{B_1}{ \mathbf{E}Z^*_{1}}
\frac{ C_3B_n^{1/3}\mathbf{E}Z^*_n}{B_1^{4/3}}.
$$
So,   \eqref{oo15} is proved with $C_2:=3(8/7)^{4/3}C_3+C_3\max_{1\le j\le N_2+1}{ B_j}\big/{ \mathbf{E}Z^*_j}<\infty$.
\end{proof}

\begin{lemma}                                                             \label{over5}
For all  $m>N_1$,
\begin{equation}                                                                               \label{oo21}
\beta_m:=\mathbf{E}[Z^*_{\nu_m};Z^*_{\nu_m}> 3B_m]\le 6C_2\mathbf{E}Z^*_mL_m^{2/3}(1).
\end{equation}
\end{lemma}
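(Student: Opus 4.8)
The plan is to bound $\beta_m=\mathbf{E}[Z^*_{\nu_m};Z^*_{\nu_m}>3B_m]$ by splitting the event $\{Z^*_{\nu_m}>3B_m\}$ according to whether $\nu_m$ equals $m$ or equals $\nu(B_m)<m$. On $\{\nu_m=m\}$ one has $Z^*_m\le B_m$ by the definition of $\nu(B_m)$ (the walk has not yet jumped above level $B_m$), and since $3B_m>B_m$ this part contributes nothing. Hence only the contribution from $\{\nu(B_m)=j<m\}$ for $j\le m$ matters, and on this event the overshoot $Z^*_{\nu_m}=Z_j$ exceeds $3B_m$ precisely because the last increment $X_j$ was large: $Z_j=Z_{j-1}+X_j-(g_j-g_{j-1})$ with $Z_{j-1}\le B_m$, so $Z_j>3B_m$ forces roughly $X_j>B_m$ (after absorbing the small boundary difference using $G_n=o(B_n)$). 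More precisely I would write $Z^*_{\nu_m}\le B_m+X_j+2G_m$ on that event and handle the $B_m$ and $G_m$ terms separately, as in the proof of Lemma \ref{Lup3}.

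The main step is then a union bound over $j$: by the strong Markov property / independence at time $j-1$,
\begin{align*}
\mathbf{E}[Z^*_{\nu_m};Z^*_{\nu_m}>3B_m]
&\le\sum_{j=1}^m\mathbf{E}[Z_j;\,T_g>j-1,\ X_j>B_m]\\
&\le\sum_{j=1}^m\big(\mathbf{E}[X_j;X_j>B_m]+2B_m\mathbf{P}(X_j>B_m)\big)\mathbf{P}(T_g>j-1).
\end{align*}
Using $\mathbf{P}(X_j>B_m)\le B_m^{-2}\mathbf{E}[X_j^2;|X_j|>B_m]$ and $\mathbf{E}[X_j;X_j>B_m]\le B_m^{-1}\mathbf{E}[X_j^2;|X_j|>B_m]$, each summand is controlled by $B_m^{-1}\mathbf{E}[X_j^2;|X_j|>B_m]\,\mathbf{P}(T_g>j-1)$ up to a constant factor. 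At this point I would invoke Lemma \ref{over4}, specifically the tail bound \eqref{oo15} in the form $\mathbf{P}(T_g>j-1)\le C_2\mathbf{E}Z^*_m\,B_m^{1/3}/B_j^{4/3}$ (taking $n=m$ there), so that
$$
\beta_m\le C\,C_2\,\mathbf{E}Z^*_m\,\frac{B_m^{1/3}}{B_m}\sum_{j=1}^m\frac{\mathbf{E}[X_j^2;|X_j|>B_m]}{B_j^{4/3}}.
$$

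The remaining work is to estimate the sum $\sum_{j=1}^m B_j^{-4/3}\mathbf{E}[X_j^2;|X_j|>B_m]$. Since $B_j\le B_m$ for $j\le m$, one cannot simply pull out $B_j^{-4/3}$; instead I would exploit that $|X_j|>B_m\ge B_j$ on the truncated event, so a factor $(|X_j|/B_j)^{1/3}\ge 1$ can be inserted, converting $B_j^{-4/3}\mathbf{E}[X_j^2;|X_j|>B_m]$ into something dominated by $B_j^{-1}\cdot B_m^{-1/3}\mathbf{E}[X_j^2\,?]$ — more cleanly, I would bound $B_j^{-4/3}\le B_m^{-1/3}B_j^{-1}$ is false in the wrong direction, so the right move is: on $\{|X_j|>B_m\}$ write $\mathbf{E}[X_j^2;|X_j|>B_m]\le B_m^{-1/3}\mathbf{E}[X_j^2|X_j|^{1/3};\dots]$ only if moments exist, which they need not. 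The clean route that gives exactly $L_m^{2/3}(1)$ is to use Hölder on the sum: $\sum_j B_j^{-4/3}\mathbf{E}[X_j^2;|X_j|>B_m]\le\big(\sum_j B_j^{-2}\sigma_j^2\big)^{2/3}\big(\sum_j \mathbf{E}[X_j^2;|X_j|>B_m]\big)^{1/3}\cdot(\text{const})$, recognizing $B_m^{-2}\sum_j\mathbf{E}[X_j^2;|X_j|>B_m]=L_m^2(1)$ and $\sum_{j\le m}\sigma_j^2/B_j^2\le C\log(B_m^2/B_1^2)$ but this introduces an unwanted logarithm. Given that the target bound is $6C_2\mathbf{E}Z^*_mL_m^{2/3}(1)$ with no logarithm, the intended estimate must be the straightforward one: $B_j\le B_m$ gives $B_j^{-4/3}\mathbf{E}[X_j^2;|X_j|>B_m]\le B_j^{-2/3}B_m^{-2/3}\mathbf{E}[X_j^2;|X_j|>B_m]$, hmm — I expect the actual argument uses $\sum_j\sigma_j^2 B_j^{-2/3}\le 3B_m^{4/3}$ (an Abel/integral estimate, since $\sum\sigma_j^2B_j^{-2/3}\approx\int dB^2/B^{2/3}\approx B^{4/3}$) together with $\mathbf{E}[X_j^2;|X_j|>B_m]\le\mathbf{E}[X_j^2;|X_j|>B_m]$ trivially and a final application of Hölder with exponents $3/2,3$ to the two sequences $\{\sigma_j^2\}$ and $\{\mathbf{E}[X_j^2;|X_j|>B_m]\}$, yielding the factor $L_m^{2/3}(1)$. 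The \textbf{main obstacle} is precisely getting the exponent bookkeeping in this last sum to produce $L_m^{2/3}(1)$ with the clean constant $6$ and no extraneous logarithmic factor; everything before it is routine once \eqref{oo15} is available.
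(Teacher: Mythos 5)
Your first half is exactly the paper's argument: on $\{\nu_m=j\}$ one has $Z_{j-1}\le B_m$ and $2G_m\le B_m/2$ for $m>N_1$, so $Z^*_{\nu_m}>3B_m$ forces $X_j>B_m$ and $Z^*_{\nu_m}\le 2X_j$; factoring out $\mathbf{P}(T_g>j-1)$ by independence, using $\mathbf{E}[X_j;X_j>B_m]\le B_m^{-1}\mathbf{E}[X_j^2;|X_j|>B_m]$ and then \eqref{oo15} with $n=m$, one arrives (as you do) at
$$
\beta_m\le\frac{2C_2\,\mathbf{E}Z^*_m}{B_m^{2/3}}\sum_{j=1}^m\frac{v_j}{B_j^{4/3}},
\qquad v_j:=\mathbf{E}[X_j^2;|X_j|>B_m].
$$
The genuine gap is the estimation of this last sum, which you yourself flag as the main obstacle and never settle. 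None of the routes you sketch works: the H\"older splitting $\sum_j v_j^{1/3}\cdot v_j^{2/3}B_j^{-4/3}\le V_m^{1/3}\bigl(\sum_j v_j/B_j^2\bigr)^{2/3}$ (or any variant pairing $B_j^{-2}$ with $\sigma_j^2$ or $v_j$) leaves the factor $\sum_{j\le m}v_j/B_j^2$, which can only be controlled by $\sum_{j\le m}\sigma_j^2/B_j^2$ and hence may be of logarithmic order, so it cannot produce the clean bound $6C_2\mathbf{E}Z^*_mL_m^{2/3}(1)$; your final guess (combining $\sum_j\sigma_j^2B_j^{-2/3}\le 3B_m^{4/3}$ with H\"older of exponents $3/2,3$) is not carried out and runs into the same loss when one tries.

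The missing idea is a self-normalisation of the sum rather than any H\"older inequality: put $V_j:=\sum_{k\le j}v_k$ and observe $V_j\le B_j^2$, hence $B_j^{-4/3}\le V_j^{-2/3}$, so that
$$
\sum_{j=1}^m\frac{v_j}{B_j^{4/3}}\le\sum_{j=1}^m\frac{V_j-V_{j-1}}{V_j^{2/3}}
\le\int_0^{V_m}\frac{dx}{x^{2/3}}=3V_m^{1/3},
$$
and finally $V_m^{1/3}/B_m^{2/3}=L_m^{2/3}(1)$ by the definition \eqref{lind.cond} of the Lindeberg fraction, which gives \eqref{oo21} with the constant $6C_2$. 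Comparing the increments $V_j-V_{j-1}$ with a power of $V_j$ itself (instead of a power of $B_j^2$) is precisely what removes the logarithm; without this step your proposal does not reach the stated bound.
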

\begin{proof}
 Note that
\begin{align*}
Z_{\nu_m}&=Z_{\nu_m-1}+g_{\nu_m-1}-g_{\nu_m}+X_{\nu_m}
<B_m+ 2G_m+ X_{\nu_m}<\frac{3}{2}B_m+ X_{\nu_m},
\end{align*}
since $Z_{\nu_m-1}<B_m$ and $2G_m/B_m<1/8<1/2$ by \eqref{Up1}. Hence,
for $1\le j\le m$,
\begin{align*}   \nonumber
\mathbf{E}[Z^*_{\nu_m};\nu_m=j,Z^*_{\nu_m}>3B_m]
&\le \mathbf{E}\left[\frac{3}{2}B_m+X_{j};T_g>\nu_m=j, X_{j}>\frac{3}{2}B_m\right]\\
&\le \mathbf{E}[2 X_{j};T_g>\nu_m=j, X_{j}>B_m]\\
&\le 2\mathbf{E}[ X_{j};T_g>j-1, X_{j}>B_m]\\
&=2\mathbf{E}[ X_{j}; X_{j}>B_m]\mathbf{P}(T_g>j-1)\\
&\le2\mathbf{E}[ X_{j}^2/B_m; X_{j}>B_m]\mathbf{P}(T_g>j-1).
\end{align*}
So,  we have the bound
\begin{gather}                                                                \label{oo22}
\beta_m=\mathbf{E}[Z^*_{\nu_m};Z^*_{\nu_m}> 3B_m]
\leq\frac{2}{B_m}\sum_{j=1}^m \mathbf{E}[X_j^2:|X_j|>B_m]\mathbf{P}(T_g>j-1).
\end{gather}

Now introduce notations
\begin{gather*} 
v_j:=\mathbf{E}[X_j^2:|X_j|>B_n]
\quad \text{and}\quad
V_j:=\sum_{k=1}^j v_k\le B_j^2.
\end{gather*}
We have from  \eqref{oo15} and \eqref{oo22} that
\begin{gather*}  
\beta_m\leq\frac{2}{B_m}\sum_{j=1}^m v_j\mathbf{P}(T_g>j-1)
\le \frac{ 2C_2\mathbf{E}Z^*_m}{B_m^{1-1/3}}
\sum_{j=1}^m \frac{v_j}{B_j^{4/3}}.
\le \frac{2 C_2\mathbf{E}Z^*_m}{B_m^{2/3}}
\sum_{j=1}^m \frac{v_j}{V_j^{2/3}}.
\end{gather*}
It is clear that
$$
\sum_{j=1}^n\frac{v_j}{V^{2/3}_j}
=\sum_{j=1}^m\frac{V_j-V_{j-1}}{V^{2/3}_j}\leq \int_0^{V_m}\frac{dx}{x^{2/3}}=3V^{1/3}_m.
$$
As a result we have
$$
\mathbf{E}[Z^*_{\nu_m};Z^*_{\nu_m}> 2B_m]
\leq 6C_2\mathbf{E}Z^*_m\frac{V^{1/3}_m}{B_m^{2/3}}
=6 C_2\mathbf{E}Z^*_mL_m^{2/3}(1).
$$
This completes the proof of the lemma.
\end{proof}

\begin{proof} [Proof of Theorems \ref{T2} and  \ref{T7}]
First, function $U_g$ is slowly varying by Lemma \ref{over4}.
Second,
by Lemma \ref{over3} we may apply
Proposition \ref{P.upper} with $m=m(n)$. As a result we have from \eqref{Up5} and \eqref{oo21} that
\begin{gather}                                                                                 \label{oo25}
\frac{\beta_{m(n)}}{\mathbf{E}Z^*_n}
\le 6 C_2L_{m(n)}^{2/3}(1)\frac{\mathbf{E}Z^*_{m(n)}}{{\mathbf{E}Z^*_n}}
\le 24 C_2L_{m(n)}^{2/3}(1).
\end{gather}
Note that $B_{m(n)}\ge\lambda_n^{1/4}B_n\ge\lambda_nB_n$ by \eqref{oo1}
and \eqref{Up1}. Thus, using \eqref{lind.cond} and \eqref{Up0}, we obtain
\begin{align*}
\lambda_n^{1/2}B_n^2L_{m(n)}^2(1)
&\le B_{m(n)}^2L_{m(n)}^2(1) =\sum_{k=1}^{m(n)}\mathbf{E}[X_k^2;|X_k|>B_{m(n)}]\\
&\le\sum_{k=1}^n\mathbf{E}[X_k^2;|X_k|>B_{m(n)}]\le\sum_{k=1}^n\mathbf{E}[X_k^2;|X_k|>\lambda_nB_n]\\
&=B_n^2L_n^2(\lambda_n)\le B_n^2\lambda_n^2.
\end{align*}
So, $L_{m(n)}^2(1)\le\lambda_n^{2-1/2}$ and, hence,
 $L_{m(n)}^{2/3}(1)\le\lambda_n^{1/2}$.
Substituting this estimate into \eqref{oo25}  we find from \eqref{oo5} that
\begin{gather*}
\alpha_{n}^*\le72\left(\rho_n^{2/3}+\lambda_n^{1/2}\right)
+24 C_2\lambda_n^{1/2}\qquad\forall \ n>N_3.
\end{gather*}
Thus, the inequality \eqref{oo2} is proved
with
$$
C_1:=72+24C_2+\max\limits_{1\le n\le N_3}\frac{\alpha_{n}^*}{\rho_n^{2/3}+\lambda_n^{1/2}}<\infty.
$$

Next, convergence to $0$ in \eqref{oo2} follows from \eqref{good} and \eqref{lind.cond}  as it was mentioned at the beginning of Subsection 2.3.
This convergence imply equivalence in \eqref{main}.
At last, relations in \eqref{main+} follows from  \eqref{must} and \eqref{Up3} since $G_n/B_n\to0$.
\end{proof}

\subsection{Proof of Theorem~\ref{T1}}
In this subsection we prove weak convergence of the sequence of processes
$
s_n(\cdot) 
$,
conditioned on $\{T_g>n\}$, towards the Brownian meander $M(t),t\in[0,1]$.
Recall that processes $s_n(t)=s(tB_n^2)/B_n,\ t\in[0,1],$
were defined in \eqref{T4.0} and \eqref{T4.0.scaled}.

We shall use the approach from \cite{DW10} which is based on the strong approximation of the broken line
process $s(t)$ by the Brownian motion, see Lemma~\ref{L.pi}.

Let $f:C[0,1]\mapsto\mathbb{R}$ be a non-negative uniformly continuous with respect to the uniform
topology function with values in the interval $[0,1]$. Our purpose is to show that
\begin{equation}
\label{T_Funk.1}
\mathbf{E}[f(s_n)\mid T_g>n]\to\mathbf{E}[f(M)]\quad\text{as }n\to\infty.
\end{equation}

Let $m(n)$ be the sequence defined in \eqref{oo1}. Recall that if $n>N_3$ then $m(n)$ satisfies
all the conditions on pairs $(m,n)$ imposed in Section~\ref{Sect:proof_of_main_result}. Thus, it
follows from \eqref{A12} and \eqref{A21} that
\begin{align}
\label{T_Funk.1a}
\nonumber
Q_{k,n}(y)&\leq \pi_n+4\varphi(0)\varepsilon_{k,n}+\Psi\left(\frac{y}{B_{k,n}}\right)\\
&\leq\rho_n+2\varphi(0)\frac{y}{B_{k,n}}\leq\rho_n+\frac{y}{B_{n}},\quad k\leq m(n).
\end{align}
In particular,
$$
Q_{k,n}(y,0)\leq\frac{2y}{B_n}\quad\text{for all }k\leq m(n)\text{ and }y\geq \rho_nB_n.
$$
Since $B_{m(n)}\geq \rho_nB_n$, we have then by the Markov property,
\begin{align*}
\mathbf{P}(T_g>n,Z^*_{\nu_{m(n)}}>2B_{m(n)})
&=\int_{2B_{m(n)}}^\infty\mathbf{P}(Z^*_{\nu_{m(n)}}\in dy)Q_{\nu_{m(n)},n}(y,0)\\
&\leq\int_{2B_{m(n)}}^\infty\mathbf{P}(Z^*_{\nu_{m(n)}}\in dy)\frac{2y}{B_n}\\
&=\frac{2}{B_n}\mathbf{E}[Z^*_{\nu_{m(n)}};Z^*_{\nu_{m(n)}}>B_{m(n)}].
\end{align*}
Then, in view of Lemma~\ref{over5} and \eqref{main},
\begin{equation*}
\mathbf{P}(T_g>n,Z^*_{\nu_{m(n)}}>2B_{m(n)})=o\left(\mathbf{P}(T_g>n)\right),
\quad n\to\infty
\end{equation*}
and, since $f$ is bounded from above,
\begin{equation}
\label{T_Funk.3}
\mathbf{E}[f(s_n);T_g>n,Z^*_{\nu_{m(n)}}>2B_{m(n)}]=o\left(\mathbf{P}(T_g>n)\right).
\end{equation}

Using \eqref{T_Funk.1a} once again, we have
$$
Q_{m(n),n}(y,0)\leq 2\rho_n^{2/3}\quad\text{for all }y\leq \rho_n^{2/3}B_n.
$$
Therefore,
\begin{align*}
\mathbf{P}(T_g>n,Z^*_{\nu_{m(n)}}\leq \rho_n^{2/3}B_n)
&\leq 2\rho_n^{2/3}\mathbf{P}(0<Z^*_{\nu_{m(n)}}\leq \rho_n^{2/3}B_n)\\
&\leq 2\rho_n^{2/3}\mathbf{P}(Z^*_{\nu_{m(n)}}>0).
\end{align*}
Applying the last inequality in \eqref{Up5} and recalling
that $B_{m(n)}\geq \rho_n^{1/3}B_n$, we get
\begin{align}
\label{T_Funk.4a}
\mathbf{P}(Z^*_{\nu_{m(n)}}>0)\leq 20\frac{\mathbf{E}Z^*_{n}}{B_{m(n)}}
\leq 20\rho_n^{-1/3}\frac{\mathbf{E}Z^*_{n}}{B_{n}}.
\end{align}
Therefore,
\begin{align}
\label{T_Funk.4}
\nonumber
\mathbf{P}(T_g>n,Z^*_{\nu_{m(n)}}\leq \rho_n^{2/3}B_n)
\leq 40\rho_n^{1/3}\frac{\mathbf{E}Z^*_{n}}{B_{n}}=o(\mathbf{P}(T_g>n)).
\end{align}
This implies that
\begin{equation}
\label{T_Funk.5}
\mathbf{E}[f(s_n);T_g>n,Z^*_{m(n)}\leq\rho_n^{2/3}B_{n}]=o\left(\mathbf{P}(T_g>n)\right).
\end{equation}

For every $k\geq0$ and every $y\in\mathbb{R}$ define a functional $f(k,y;\cdot)$ by the following
relation:
$$
f(k,y;h):=f\left(y+\left(h(t)-h\left(\frac{B_k^2}{B_n^2}\right)\right)
\mathbb{I}\left\{t\geq \frac{B_k^2}{B_n^2}\right\}\right),\quad h\in C[0,1].
$$
It follows from the definition of $\nu_{m(n)}$ that
\begin{align*}
\frac{\max_{k\leq\nu_{m(n)}}|S_k-S_{\nu_{m(n)}}|}{B_n}
&\leq \frac{\max_{k\leq\nu_{m(n)}}|Z_k-Z_{\nu_{m(n)}}|}{B_n}+\frac{2G_n}{B_n}\\
&\leq \frac{B_{m(n)}+Z^*_{\nu_{m(n)}}}{B_n}+\frac{2G_n}{B_n}
\leq \frac{2B_{m(n)}}{B_n}+\frac{2G_n}{B_n}
\end{align*}
on the event $\{Z^*_{m(n)}\in(0,2B_{m(n)}]\}$. From this bound and the uniform continuity of the
functional $f$ we infer that
$$
f(s_n)-f\left(\nu_{m(n)},\frac{S_{\nu_{m(n)}}}{B_n},s_n\right)=o(1)
\quad\text{on the event }\{Z^*_{m(n)}\in(0,2B_{m(n)}]\}.
$$
Combining this with \eqref{T_Funk.3} and \eqref{T_Funk.5}, we obtain
\begin{align}
\label{T_Funk.6}
&\mathbf{E}[f(s_n);T_g>n]\\
\nonumber
&=\mathbf{E}\left[f\left(\nu_{m(n)},\frac{S_{\nu_{m(n)}}}{B_n},s_n\right);
T_g>n,Z^*_{\nu_{m(n)}}\in(\rho_n^{2/3}B_n,2B_{m(n)}]\right]
+o\left(\mathbf{P}(T_g>n)\right).
\end{align}
By the Markov property at $\nu_{m(n)}$,
\begin{align*}
&\mathbf{E}\left[f\left(\nu_{m(n)},\frac{S_{\nu_{m(n)}}}{B_n},s_n\right);
T_g>n,Z^*_{\nu_{m(n)}}\in(\rho_n^{2/3}B_n,2B_{m(n)}]\right]\\
&\hspace{1cm}=\sum_{k=1}^{m(n)}\int_{\rho_n^{2/3}B_n}^{2B_{m(n)}}\mathbf{P}(Z^*_{k}\in dy,\nu_{m(n)}=k)\\
&\hspace{3cm}\times\mathbf{E}\left[f\left(k,\frac{y+g_k}{B_n},s_n\right);y+\min_{j\in[k,n]}(Z_j-Z_k)>0\right].
\end{align*}
We now note that it suffices to show that, uniformly in $y\in(\rho_n^{2/3},2B_{m(n)}]$ and $k\leq m(n)$,
\begin{equation}
\label{T_Funk.7}
\mathbf{E}\left[f\left(k,\frac{y+g_k}{B_n},s_n\right);y+\min_{j\in[k,n]}(Z_j-Z_k)>0\right]
=(\mathbf{E}f(M)+o(1))\sqrt{\frac{2}{\pi}}\frac{y}{B_n}.
\end{equation}
Indeed, this relation implies that
\begin{align*}
&\mathbf{E}\left[f\left(\nu_{m(n)},\frac{S_{\nu_{m(n)}}}{B_n},s_n\right);
T_g>n,Z^*_{\nu_{m(n)}}\in(\rho_n^{2/3},2B_{m(n)}]\right]\\
&\hspace{1cm}=\sqrt{\frac{2}{\pi}}\frac{\mathbf{E}f(M)+o(1)}{B_n}
\mathbf{E}[Z^*_{\nu_m(n)};Z^*_{\nu_{m(n)}}\in(\rho_n^{2/3}B_n,2B_{m(n)}]].
\end{align*}
It is clear that
$$
\mathbf{E}[Z^*_{\nu_m(n)};Z^*_{\nu_{m(n)}}\leq\rho_n^{2/3}B_n]
\leq \rho_n^{2/3}B_n\mathbf{P}(Z^*_{\nu_{m(n)}}>0)
$$
Applying \eqref{T_Funk.4a}, we obtain
\begin{align*}
\mathbf{E}[Z^*_{\nu_m(n)};Z^*_{\nu_{m(n)}}\leq\rho_n^{2/3}B_n]
\leq \rho_n^{2/3}B_n\mathbf{P}(Z^*_{\nu_{m(n)}}>0)
\leq 20\rho_n^{1/3}\mathbf{E}Z^*_{n}=o(\mathbf{E}Z^*_{n}).
\end{align*}
Furthermore, by Lemma~\ref{over5} and the second inequality in \eqref{Up5},
$$
\mathbf{E}[Z^*_{\nu_m(n)};Z^*_{\nu_{m(n)}}>2B_{m(n)}]=o(\mathbf{E}Z^*_{n}).
$$
As a result,
$$
\mathbf{E}[Z^*_{\nu_m(n)};Z^*_{\nu_{m(n)}}\in(\rho_n^{2/3}B_n,2B_{m(n)}]]
=(1+o(1))\mathbf{E}Z^*_{n}
$$
and, consequently,
\begin{align*}
&\mathbf{E}\left[f\left(\nu_{m(n)},\frac{S_{\nu_{m(n)}}}{B_n},s_n\right);
T_g>n,Z^*_{\nu_{m(n)}}\in(\rho_n^{2/3},2B_{m(n)}]\right]\\
&\hspace{1cm}=(\mathbf{E}f(M)+o(1))\sqrt{\frac{2}{\pi}}\frac{\mathbf{E}Z^*_{n}}{B_n}
\end{align*}
Plugging this into \eqref{T_Funk.6} and taking into account \eqref{main}, we get
$$
\mathbf{E}[f(s_n);T_g>n]=(\mathbf{E}f(M)+o(1))\mathbf{P}(T_g>n),
$$
which is equivalent to \eqref{T_Funk.1}.

In order to prove \eqref{T_Funk.7}, we apply Lemma~\ref{L.pi}. Set $w_n(t):=W_n(tB_n^2)/B_n$
and define
$$
A_n:=\left\{\max_{t\in[0,1]}|s_n(t)-w_n(t)|\leq\pi_n\right\}.
$$
Then, on this set we have, uniformly in $k$,
$$
\left\|\left(s_n(t)-s_n\left(\frac{B_k^2}{B_n^2}\right)\right)
\mathbb{I}\left\{t\geq \frac{B_k^2}{B_n^2}\right\}-
\left(w_n(t)-w_n\left(\frac{B_k^2}{B_n^2}\right)\right)
\mathbb{I}\left\{t\geq \frac{B_k^2}{B_n^2}\right\}\right\|\leq 2\pi_n.
$$
Since $f$ is uniformly continuous, there exists $\delta_n\to0$ such that
$$
\left|f(k,z;s_n)-f(k,z,w_n)\right|\leq\delta_n\quad\text{on the event }A_n.
$$
Using now \eqref{T_Funk.1a}, we conclude that
\begin{align*}
&\left|\mathbf{E}\left[f\left(k,\frac{y+g_k}{B_n},s_n\right)-f\left(k,\frac{y+g_k}{B_n},w_n\right);
A_n,y+\min_{j\in[k,n]}(Z_j-Z_k)>0\right]\right|\\
&\hspace{2cm}\leq \delta_nQ_{k,n}(y,0)=o\left(\frac{y}{B_n}\right)
\end{align*}
uniformly in $k\leq m(n)$ and $y\in(\rho_n^{2/3}B_n, 2B_{m(n)}]$. From this estimate and
$\mathbf{P}(A_n^c)\leq\pi_n=o(y/B_n)$ we have
\begin{align}
\label{T_Funk.8}
\nonumber
&\mathbf{E}\left[f\left(k,\frac{y+g_k}{B_n},s_n\right);y+\min_{j\in[k,n]}(Z_j-Z_k)>0\right]\\
&\hspace{1cm}
=\mathbf{E}\left[f\left(k,\frac{y+g_k}{B_n},w_n\right);A_n,y+\min_{j\in[k,n]}(Z_j-Z_k)>0\right]
+o\left(\frac{y}{B_n}\right).
\end{align}
On the set $A_n$ we also have
\begin{align*}
&\left\{y-\rho_nB_n+\min_{B_k^2\leq t\leq B_n^2}(W_n(t)-W_n(B_k^2))>0\right\}\\
&\hspace{1cm}\subseteq\left\{y+\min_{j\in[k,n]}(Z_j-Z_k)>0\right\}\\
&\hspace{2cm}\subseteq
\left\{y+\rho_nB_n+\min_{B_k^2\leq t\leq B_n^2}(W_n(t)-W_n(B_k^2))>0\right\}.
\end{align*}
Combining this with \eqref{T_Funk.8} and recalling that $\mathbf{P}(A_n^c)\leq\pi_n=o(y/B_n)$, we obtain
\begin{align}
\label{T_Funk.9}
&\mathbf{E}\left[f\left(k,\frac{y+g_k}{B_n},s_n\right);y+\min_{j\in[k,n]}(Z_j-Z_k)>0\right]\\
\nonumber
&\leq \mathbf{E}\left[f\left(k,\frac{y+g_k}{B_n},w_n\right);
y+\rho_nB_n+\min_{B_k^2\leq t\leq B_n^2}(W_n(t)-W_n(B_k^2))>0\right]+o\left(\frac{y}{B_n}\right)
\end{align}
and
\begin{align}
\label{T_Funk.10}
&\mathbf{E}\left[f\left(k,\frac{y+g_k}{B_n},s_n\right);y+\min_{j\in[k,n]}(Z_j-Z_k)>0\right]\\
\nonumber
&\geq \mathbf{E}\left[f\left(k,\frac{y+g_k}{B_n},w_n\right);
y-\rho_nB_n+\min_{B_k^2\leq t\leq B_n^2}(W_n(t)-W_n(B_k^2))>0\right]+o\left(\frac{y}{B_n}\right).
\end{align}
Since $\rho_nB_n=o(y)$ for $y\geq\rho_n^{2/3}B_n$, we get from \eqref{bm}
$$
\mathbf{P}\left(y\pm\rho_nB_n+\min_{B_k^2\leq t\leq B_n^2}(W_n(t)-W_n(B_k^2))>0\right)
\sim\sqrt{\frac{2}{\pi}}\frac{y}{B_n}.
$$
Furthermore, by Theorem 2.1 in Durrett, Iglehart and Miller \cite{DIM77},
$$
\mathbf{E}\left[f\left(k,\frac{y+g_k}{B_n},w_n\right)\Big|
y\pm\rho_nB_n+\min_{B_k^2\leq t\leq B_n^2}(W_n(t)-W_n(B_k^2))>0\right]\to \mathbf{E}f(M).
$$
Applying these relations to the right hand sides in \eqref{T_Funk.9} and \eqref{T_Funk.10}, we obtain
\eqref{T_Funk.7}. Thus the proof is finished.
\section{Asymptotic properties of $U_g$}

\subsection{Proof of Theorem~\ref{T3}}
If $\overline{g}=\sup_{n\geq1}g_n$ is finite then $\oo{g}-S_{T_g}\ge 0$.
Hence, by the monotone convergence theorem,
\begin{equation*}
E_n:=\mathbf{E}[\oo{g}-S_{T_g};T_g\le{n}]-\oo{g}
\uparrow U_g(\infty)=\mathbf{E}[\oo{g}-S_{T_g}]-\oo{g}\le\infty.
\end{equation*}
Next, from \eqref{M1}  we have
\begin{equation*}%
\label{ff26}
U_g(B_n^2)=\mathbf{E}Z^*_{{n}}=
\mathbf{E}[(\oo{g}-S_{T_g});T_g\le{n}]-\oo{g}+
(\oo{g}-g_{{n}})\mathbf{P}[T_g>n].
\end{equation*}
Using  now \eqref{Up2} and  \eqref{good} we obtain for $n>N_2$ that
\begin{equation*}
\left|\mathbf{E}Z^*_{{n}}-E_n\right|\le (\oo{g}-g_{{n}})\mathbf{P}[T_g>n]
\le o(B_n)\cdot3\mathbf{E}Z^*_n/B_n=o\left(\mathbf{E}Z^*_n\right).
\end{equation*}
Thus
\begin{equation}                                                                                    \label{w0}
0<U_g(B_n^2)=\mathbf{E}Z^*_{{n}}\sim E_n\uparrow U_g(\infty)\le\infty.
\end{equation}

Hence the limit in \eqref{lim+} is well defined. Moreover, the sequence of positive numbers
$\mathbf{E}Z^*_n$ in \eqref{w0} is asymptotically equivalent to the sequence of
non-decreasing numbers $E_n$. Consequently, $E_{N_4}>0$ for some $N_4<\infty$.
Hence, $U_g(\infty)\ge E_{N_4}>0$.

Thus, all assertions of Theorem \ref{T3} are proved because the mentioned there property
of non-increasing sequences $\{g_n\}$ was proved in Remark \ref{rem2}.

Moreover, convergence \eqref{w0} allows us  to obtain
\begin{lemma}                                                             \label{L+1}
If $\bar{g}=\sup\limits_{n\ge 1}g_n<\infty$ then  there exists constant $C_5<\infty$  such that
\begin{gather}                                                                               \label{w1}
B_n\mathbf{P}(T_g>n)\ge C_5>0\quad\text{for all }n\geq1.
\end{gather}
\end{lemma}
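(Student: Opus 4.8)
The plan is to read off \eqref{w1} from the asymptotics \eqref{main} together with the monotonicity already extracted in \eqref{w0}, and then to patch up the finitely many small indices using the non-degeneracy hypothesis \eqref{must}.

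First I would combine \eqref{main} and \eqref{w0}. By \eqref{main} we have $B_n\mathbf{P}(T_g>n)\sim\sqrt{2/\pi}\,U_g(B_n^2)$, and by \eqref{w0} the values $U_g(B_n^2)=\mathbf{E}Z^*_n$ are asymptotically equivalent to the non-decreasing sequence $E_n$, which increases to $U_g(\infty)\ge E_{N_4}>0$. Hence $B_n\mathbf{P}(T_g>n)\to\sqrt{2/\pi}\,U_g(\infty)$, a limit lying in $(0,\infty]$. In particular there are an index $N_5$ and a constant $c_0>0$ (e.g. $c_0=\tfrac14\sqrt{2/\pi}\,E_{N_4}$) such that $B_n\mathbf{P}(T_g>n)\ge c_0$ for all $n\ge N_5$.

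It then remains to bound $B_n\mathbf{P}(T_g>n)$ from below over the finite range $1\le n<N_5$. For each such $n$ one has $B_n^2=\sum_{k=1}^n\sigma_k^2>0$ since every $\sigma_k^2>0$, and $\mathbf{P}(T_g>n)>0$ by \eqref{must}; so $B_n\mathbf{P}(T_g>n)$ is a strictly positive number, and the minimum of finitely many such numbers is strictly positive. Setting
\[
C_5:=\min\Bigl\{c_0,\ \min_{1\le n<N_5}B_n\mathbf{P}(T_g>n)\Bigr\}>0
\]
gives \eqref{w1}. There is no genuine obstacle here: the statement is a routine consequence of results already proved, and the only point requiring attention is the treatment of the small indices, where hypothesis \eqref{must} is indispensable, since without it $\mathbf{P}(T_g>n)$ could vanish for some small $n$ and no uniform positive lower bound could hold.
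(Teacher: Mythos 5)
Your proposal is correct and follows essentially the same route as the paper: combine the asymptotics \eqref{main} with the monotone equivalence \eqref{w0} to get a positive (possibly infinite) limit for $B_n\mathbf{P}(T_g>n)$, and use \eqref{must} to handle the finitely many remaining indices. The paper states this in one line (with the positivity for small $n$ implicit in the leading ``$0<$''), whereas you spell out the finite-range argument explicitly; the content is the same.
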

\begin{proof}
We have from \eqref{main} and \eqref{w0} that
\begin{equation*}
0<B_n\mathbf{P}(T_g>n)\sim U_g(B_n^2)\sim E_n\uparrow U_g(\infty)\in(0,\infty).
\end{equation*}
This fact implies \eqref{w1}.
\end{proof}


\subsection{Proof of Theorem~\ref{T4}}
We split the proof into two steps.

\begin{lemma}                                                             \label{L+2}
If $\bar{g}<\infty$, then
\begin{gather}                                                                               \label{w2}
2\mathbf{E}[\bar{g}-S_{T_g}]\ge C_5\sum\limits_{k>1}\sigma_k^2(\bar{g}-\bar{g}_k)/B_k^3.
\end{gather}
\end{lemma}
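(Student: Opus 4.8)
\emph{Proof proposal.} The plan is to discard the overshoot and recognise the remaining lower bound for $\mathbf{E}[\overline{g}-S_{T_g}]$ as a multiple of the series in \eqref{w2}; passing back and forth between the two series costs exactly the factor $2$ in \eqref{w2}. First note that $B_n\to\infty$ (a consequence of the Lindeberg condition, cf. \eqref{Up0}), so by \eqref{main} and the slow variation of $U_g$ we have $\mathbf{P}(T_g>n)\to0$, i.e. $T_g<\infty$ a.s. Since $S_{T_g}\le g_{T_g}\le\overline{g}$, both terms in the decomposition
$$\overline{g}-S_{T_g}=(\overline{g}-g_{T_g})+(g_{T_g}-S_{T_g})$$
are non-negative, and therefore $\mathbf{E}[\overline{g}-S_{T_g}]\ge\mathbf{E}[\overline{g}-g_{T_g}]$.

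For the lower bound on $\mathbf{E}[\overline{g}-g_{T_g}]$ I would use $g_n\le\overline{g}_n$ for each $n$, and then Tonelli's theorem together with the telescoping identity $\overline{g}-\overline{g}_n=\sum_{k=2}^n(\overline{g}_{k-1}-\overline{g}_k)$ (recall $\overline{g}=\overline{g}_1$):
$$\mathbf{E}[\overline{g}-g_{T_g}]=\sum_{n\ge1}(\overline{g}-g_n)\mathbf{P}(T_g=n)\ge\sum_{n\ge2}(\overline{g}-\overline{g}_n)\mathbf{P}(T_g=n)=\sum_{k\ge2}(\overline{g}_{k-1}-\overline{g}_k)\mathbf{P}(T_g\ge k).$$
All terms here are non-negative, so the rearrangement is legitimate without any integrability check. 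Lemma \ref{L+1} gives $\mathbf{P}(T_g\ge k)=\mathbf{P}(T_g>k-1)\ge C_5/B_{k-1}$, whence
$$\mathbf{E}[\overline{g}-S_{T_g}]\ge C_5\sum_{k\ge2}\frac{\overline{g}_{k-1}-\overline{g}_k}{B_{k-1}}.$$

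It then remains to check that the right-hand side of \eqref{w2} does not exceed twice this. Writing again $\overline{g}-\overline{g}_k=\sum_{j=2}^k(\overline{g}_{j-1}-\overline{g}_j)$ and applying Tonelli's theorem,
$$\sum_{k\ge2}\frac{\sigma_k^2}{B_k^3}(\overline{g}-\overline{g}_k)=\sum_{j\ge2}(\overline{g}_{j-1}-\overline{g}_j)\sum_{k\ge j}\frac{\sigma_k^2}{B_k^3},$$
and since $\sigma_k^2=B_k^2-B_{k-1}^2$ while $t^{-3/2}\ge B_k^{-3}$ on $[B_{k-1}^2,B_k^2]$, the inner sum is bounded by $\int_{B_{j-1}^2}^{\infty}t^{-3/2}\,dt=2/B_{j-1}$. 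Combining the last three displays,
$$C_5\sum_{k\ge2}\frac{\sigma_k^2}{B_k^3}(\overline{g}-\overline{g}_k)\le2C_5\sum_{j\ge2}\frac{\overline{g}_{j-1}-\overline{g}_j}{B_{j-1}}\le2\,\mathbf{E}[\overline{g}-S_{T_g}],$$
which is exactly \eqref{w2}.

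The argument is short and I expect no genuine obstacle; the only point that needs attention is the index bookkeeping, since the lower bound involves $1/B_{k-1}$ and the upper estimate of the inner sum likewise produces $1/B_{j-1}$, so the two match up to the stated constant $2$, whereas a careless estimate using $1/B_k$ in one place would leave a gap. Everything else is Tonelli's theorem applied to non-negative series and one elementary integral comparison.
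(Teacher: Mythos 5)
Your proof is correct and follows essentially the same route as the paper's: discard the overshoot via $S_{T_g}\le g_{T_g}$, pass to the monotone envelope $\overline{g}_k$, sum by parts (telescoping plus Tonelli), and invoke the bound $\mathbf{P}(T_g>n)\ge C_5/B_n$ of Lemma~\ref{L+1}. The only cosmetic difference is the final elementary step: the paper resums to $\sum_{k>1}(\overline{g}-\overline{g}_k)\left(\frac{1}{B_{k-1}}-\frac{1}{B_k}\right)$ and uses the pointwise inequality $\frac{1}{B_{k-1}}-\frac{1}{B_k}\ge\frac{\sigma_k^2}{2B_k^3}$, whereas you bound $\sum_{k\ge j}\sigma_k^2/B_k^3\le 2/B_{j-1}$ by an integral comparison — the same factor-of-two estimate in equivalent form.
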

\begin{proof}
We have from  \eqref{w1}  that
\begin{align}
 \nonumber
\mathbf{E}[\bar{g}-S_{T_g}]
&\geq\sum\limits_{k>0}(\bar{g}-g_k)\mathbf{P}(T_g=k)
\ge\sum\limits_{k>0}(\bar{g}-\bar{g}_k)\mathbf{P}(T_g=k)
\\
\nonumber
&=\sum\limits_{k>0}(\bar{g}-\bar{g}_k)[\mathbf{P}(T_g>k-1)-\mathbf{P}(T_g>k)]
\\
\nonumber
&=(\bar{g}-\bar{g}_{1})\mathbf{P}(T_g>0)
+\sum\limits_{k>0}(\bar{g}_k-\bar{g}_{k+1})\mathbf{P}(T_g>k)
\\
\nonumber
&\ge C_5\frac{\bar{g}-\bar{g}_{1}}{B_{1}}
+C_5\sum\limits_{k>0}\frac{\bar{g}_k-\bar{g}_{k+1}}{B_k}
\\
\nonumber
&=C_5\sum\limits_{k>1}(\bar{g}-\bar{g}_k)\left(\frac{1}{B_{k-1}}-\frac{1}{B_k}\right).
\end{align}
But
$$
\frac{1}{B_{k-1}}-\frac{1}{B_k}
=\frac{B_k^2-B_{k-1}^2}{B_k B_{k-1}(B_k+B_{k-1})}
\ge \frac{\sigma_k^2}{2B_k^3}.
$$
So,  \eqref{w2} is proved.
\end{proof}

\begin{lemma}                                                             \label{L+3}
If $\bar{g}<\infty$, then for every $\ee>0$ there exists a constant $N_{4}<\infty$ such that
\begin{gather}                                                                                   \label{w3}
4\mathbf{E}[\bar{g}-S_{T_g}]\ge C_{5}(1-e^{-\varepsilon^2/8})
\sum\limits_{n> N_4}\mathbf{E}[-X_n;-X_n>\varepsilon B_n]/B_{n}.
\end{gather}
\end{lemma}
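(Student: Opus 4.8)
The plan is to estimate $\mathbf{E}[\bar g - S_{T_g}]$ from below by isolating, for each $n$, the contribution of trajectories that survive up to time $n-1$ and then make a single large negative jump at step $n$. The starting point is the same observation used in Lemma~\ref{L+2} and in the proof of Lemma~\ref{Lup3}: on $\{T_g=n\}$ one has $-S_{T_g}\ge -X_n + S_{n-1} - \bar g \ge -X_n + g_{n-1}-\bar g \ge -X_n - 2\bar g$ (using $S_{n-1}>g_{n-1}$ and $|g_k|\le\bar g$ after discarding finitely many indices where $|g_k|$ may exceed $\bar g$... actually $\bar g=\sup g_n$ bounds $g_n$ from above but not $|g_n|$; so one should instead write $\bar g - S_{T_g} \ge \bar g - g_{n-1} - X_n \ge -X_n$ since $g_{n-1}\le\bar g$ and $S_{n-1}>g_{n-1}$ gives $-S_{n-1}<-g_{n-1}$, hence $\bar g - S_{T_g}=\bar g - S_{n-1}-X_n \ge \bar g - S_{n-1} - X_n$, and $S_{n-1}>g_{n-1}$ does not immediately bound $S_{n-1}$ from above — so one keeps the crude bound $\bar g - S_{T_g}\ge -X_n$ on $\{T_g=n\}\cap\{X_n<0,\ S_{n-1}\le \bar g\}$, which suffices).

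First I would write, for any fixed $\varepsilon>0$ and any threshold index $N_4$ to be chosen,
\begin{align*}
\mathbf{E}[\bar g - S_{T_g}] &\ge \sum_{n>N_4}\mathbf{E}[-X_n;\, T_g=n,\ -X_n>\varepsilon B_n]\\
&\ge \sum_{n>N_4}\mathbf{E}[-X_n;\, T_g>n-1,\ -X_n>\varepsilon B_n,\ \text{jump kills the walk}].
\end{align*}
The key point is that on $\{T_g>n-1\}$, a downward jump of size exceeding $\varepsilon B_n$ at time $n$ forces $T_g=n$ provided $S_{n-1}-g_n\le\varepsilon B_n$, i.e. provided the walk has not climbed too high. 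So I would split $\{T_g>n-1\}$ according to whether $Z_{n-1}=S_{n-1}-g_{n-1}$ exceeds some level of order $B_n$ or not; on the ``not too high'' part, $X_n$ is independent of $\mathcal F_{n-1}$ and I can factor the expectation as $\mathbf{E}[-X_n;-X_n>\varepsilon B_n]\cdot\mathbf{P}(T_g>n-1,\ Z_{n-1}\le cB_n)$ for a suitable constant $c$. The lower bound on the surviving probability then comes from the already-established universal lower bound $B_n\mathbf{P}(T_g>n)\ge C_5$ of Lemma~\ref{L+1}, combined with the CLT/FCLT fact that, conditionally on $T_g>n-1$, the walk is $O(B_n)$ with probability bounded away from zero — equivalently, that $\mathbf{P}(T_g>n-1,\ Z_{n-1}>cB_n)$ is a small fraction of $\mathbf{P}(T_g>n-1)$ once $c$ is large, which follows from the Markov inequality $\mathbf{P}(Z^*_{n-1}>cB_n)\le \mathbf{E}Z^*_{n-1}/(cB_n)$ together with $\mathbf{E}Z^*_{n-1}=U_g(B_{n-1}^2)\sim B_{n-1}\mathbf{P}(T_g>n-1)$ from \eqref{main}. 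Choosing $c$ large and $N_4$ large enough that these asymptotic equivalences are within, say, a factor $2$, one gets $\mathbf{P}(T_g>n-1,\ Z_{n-1}\le cB_n)\ge \tfrac12\mathbf{P}(T_g>n-1)\ge \tfrac{C_5}{2B_n}\cdot\tfrac{7}{8}$ after also controlling $B_{n-1}/B_n$ via \eqref{Up1+}.

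The factor $1-e^{-\varepsilon^2/8}$ suggests that the precise bookkeeping is slightly different: rather than a crude ``large $c$'' argument I expect the cleaner route is to note that on $\{T_g>n-1\}$ one has $Z_{n-1}>0$, and a jump $X_n<-( Z_{n-1}+g_{n-1}-g_n)$ certainly produces $T_g=n$; when additionally $-X_n>\varepsilon B_n$ and $Z_{n-1}\le \varepsilon B_n/2$ (say), the Gaussian/CLT estimate for $\mathbf{P}(Z^*_{n-1}\le \varepsilon B_{n-1}/2)$ — or directly $\mathbf{P}(0<Z^*_{n-1}\le \varepsilon B_n/2)\ge (1-e^{-\varepsilon^2/8})\mathbf{P}(T_g>n-1)$ via the Brownian meander limit \eqref{joint}, which gives $\mathbf{P}(S_{n-1}>g_{n-1}+vB_{n-1}\mid T_g>n-1)\to e^{-v^2/2}$ — produces exactly the stated constant. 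So the cleanest argument invokes \eqref{joint}: for $n$ large, $\mathbf{P}(Z_{n-1}\le \tfrac\varepsilon2 B_n\mid T_g>n-1)\ge 1-e^{-\varepsilon^2/8}-o(1)$, and then independence of $X_n$ from $\mathcal F_{n-1}$ gives
\begin{align*}
\mathbf{E}[\bar g - S_{T_g}]&\ge \sum_{n>N_4}\mathbf{E}[-X_n;-X_n>\varepsilon B_n]\,\mathbf{P}\big(T_g>n-1,\ Z_{n-1}\le \tfrac\varepsilon2 B_n\big)\\
&\ge \tfrac{C_5}{2}(1-e^{-\varepsilon^2/8})\sum_{n>N_4}\frac{\mathbf{E}[-X_n;-X_n>\varepsilon B_n]}{B_n},
\end{align*}
which is \eqref{w3} up to absorbing the harmless constants (the factor $4$ versus $2$ leaves room for the $o(1)$ losses and for replacing $B_{n-1}$ by $B_n$). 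The main obstacle is making the ``not too high, then one killing jump'' decomposition fully rigorous with uniform-in-$n$ constants: one must verify that the event $\{-X_n>\varepsilon B_n,\ Z_{n-1}\le\tfrac\varepsilon2 B_n\}$ forces $T_g=n$ — this needs $g_{n-1}-g_n$ to be controlled, which is where $g_n=o(B_n)$ enters — and that the meander convergence \eqref{joint} can be applied with the required uniformity as $n$ ranges over all large integers, which is legitimate since \eqref{joint} is a genuine limit statement and $\varepsilon$ is fixed. Everything else is Markov's inequality, \eqref{main}, Lemma~\ref{L+1}, and \eqref{Up1+}.
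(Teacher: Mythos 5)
Your proposal is correct and follows essentially the same route as the paper's proof: decompose $\mathbf{E}[\bar g-S_{T_g}]$ over $\{T_g=n\}$, observe that on $\{-X_n>\varepsilon B_n,\ Z_{n-1}<\varepsilon B_n/2\}$ the walk is killed at time $n$ (using $g_{n-1}-g_n<\varepsilon B_n/2$ from \eqref{good}), factor by independence of $X_n$ from the past, and bound $\mathbf{P}(T_g>n-1,\ Z_{n-1}<\varepsilon B_{n-1}/2)$ from below via the meander limit \eqref{joint} together with Lemma~\ref{L+1}. The only deviations are constant bookkeeping (the paper bounds $-X_n-Z_{n-1}\ge -X_n/2$ on that event and takes the conditional probability $\ge(1-e^{-\varepsilon^2/8})/2$ for $n\ge N_5$, which is exactly where the factor $4$ in \eqref{w3} comes from), and these are absorbed by the slack you already acknowledge.
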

\begin{proof}
It follows from \eqref{main+} that, for every $\varepsilon>0$,
\begin{gather*}
\mathbf{P}\left(\frac{Z_{n}}{B_n}<\frac{\varepsilon}{2}\Big|T_g>n\right)\to 1-e^{-(\varepsilon/2)^2/2}
=1-e^{-\varepsilon^2/8}>0.
\end{gather*}
Hence, there exists $N_{5}<\infty$ such that
\begin{gather}                                                                                   \label{w4}
\mathbf{P}\left(\frac{Z_{n}}{B_n}<\frac{\varepsilon}{2}\Big|T_g>n\right)
\ge \frac{1-e^{-\varepsilon^2/8}}{2}>0\qquad \text{for all } n\ge N_{5}.
\end{gather}
Using \eqref{good} we find $N_{4}<\infty$ such that $N_4\ge N_5$ and
\begin{gather}                                                                                   \label{w5}
g_{n-1}-g_n<\varepsilon B_n/2\quad \text{for all } n\ge N_{4}.
\end{gather}

Next, since $S_{T_g}=X_{T_g}+Z_{T_g-1}+g_{T_g-1}\le X_{T_g}+Z_{T_g-1}+ \bar{g}$, we have
\begin{gather}                                                                                  \label{w6}
\mathbf{E}[\bar{g}-S_{T_g}]\ge\mathbf{E}[- X_{T_g}-Z_{T_g-1}]=\sum_{n>0}b_n,
\end{gather}
where
\begin{gather}                                                                                  \label{w7}
b_n:=\mathbf{E}[- X_{T_g}-Z_{T_g-1}:T_g=n]
=\mathbf{E}[- X_{n}-Z_{n-1}:T_g>n-1, Z_n\le0].
\end{gather}
Using \eqref{w5} we obtain the following inclusions of events
\begin{gather*}
\{-X_{n}>\ee B_n, Z_{n-1}<\ee B_n/2\}\subset
\{Z_n=X_{n}+Z_{n-1}+g_{n-1}-g_n<0\},
\\
\{-X_{n}>\ee B_n, Z_{n-1}<\ee B_n/2\}\subset\{- X_{n}-Z_{n-1}>-X_n/2\}.
\end{gather*}
Hence, it follows from \eqref{w7} that
\begin{align}                                                                                  \label{w8}
b_n&\ge\mathbf{E}[- X_{n}/2:T_n>n-1,-X_{n}>\ee B_n, Z_{n-1}<\ee B_n/2]
\\ \nonumber
&=\mathbf{E}[- X_{n}/2:-X_{n}>\ee B_n]
\mathbf{P}[T_g>n-1, Z_{n-1}<\ee B_n/2].
\end{align}

Since $B_n>B_{n-1}$, we have from \eqref{w1}, \eqref{w4} and \eqref{w5} that for $n>N_4$
\begin{align*}
\mathbf{P}\left(T_g>n-1, Z_{n-1}<\frac{\ee B_n}{2}\right)
&\ge \mathbf{P}\left(T_g>n-1, Z_{n-1}<\frac{\ee B_{n-1}}{2}\right)\\
&=\mathbf{P}(T_g>n-1)\mathbf{P}\left(Z_{n-1}<\frac{\ee B_{n-1}}{2}\big|T_g>n-1\right)\\
&\ge\frac{C_5(1-e^{-\varepsilon^2/8})}{2B_n}.
\end{align*}
This inequality together with \eqref{w6}, \eqref{w7} and \eqref{w8} imply \eqref{w3}.
\end{proof}

Theorem \ref{T4} immediately follows from Lemmas \ref{L+2} and \ref{L+3}.

\subsection{Proof of Theorem~\ref{T5}} 
Introduce the notation
\begin{align}     \nonumber                                                                   
T&:=T_g,\quad M_n:= h_{n}+\uu_1-\uu_{n},\quad
\oo{M}_m:=\sum_{k>m}M_k\frac{\sigma_k^2}{B^3_k},
\\                                                                                   \label{w11}
H_n&:=h_{n}+g_{n-1}-\uu_{n}>0,\quad
\oo{F}_m:=\sum_{k>m}\frac{1}{B_k}\mathbf{E}[-X_k:-X_k>H_k],
\end{align}
It follows from \eqref{Sum+} and \eqref{hSum} that $\oo{M}_n\to0$,  and $\oo{F}_n\to0$ by \eqref{hLind}. Hence, there exists  finite $N_6$ such that
\begin{align}                                                                                                                                                                   \label{w12}
N_6&:=\min\{m>N_2:\oo{F}_m+\oo{M}_m\le1/8\}<\infty,\quad
\oo{E}_n:=\max_{N_6\le k\le n}\mathbf{E}Z_k^*.
\end{align}

\begin{lemma}\label{L+4}
If $n\ge m\ge N_6$ then
\begin{align}                                                                       \label{w13}
{F}^*_{m}&:=\mathbf{E}[ -X_{T};-X_{T}>H_{T},N_6<T\leq m]
\leq 4\oo{E}_n\oo{F}_{N_6}
\end{align}
and
\begin{align}                                                                         \label{w14}
{M}^*_{m}&:=\mathbf{E}[ M_T;N_6<T\leq m]
\leq 4\oo{E}_n\oo{M}_{N_6}
.
\end{align}
\end{lemma}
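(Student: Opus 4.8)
My plan is to establish both bounds by the scheme already behind \eqref{Up11}: decompose the expectation according to the value of $T=T_g$ and insert the upper bound $\mathbf{P}(T>k)\le 3\mathbf{E}Z^*_k/B_k$ from \eqref{Up2}. Since $N_6>N_2\ge N_1$ by \eqref{w12}, every index occurring below exceeds $N_2$, so \eqref{Up2} and \eqref{Up1+} are available, and $\mathbf{E}Z^*_k\le\oo{E}_n$ whenever $N_6\le k\le n$. The case $m=N_6$ is trivial, so I assume $m>N_6$.

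For \eqref{w13} I would write $F^*_m=\sum_{j=N_6+1}^m\mathbf{E}[-X_j;-X_j>H_j,\,T=j]$. On $\{T=j\}$ the event $\{T>j-1\}$ is $\mathcal{F}_{j-1}$-measurable, $X_j$ is independent of $\mathcal{F}_{j-1}$, and $(-X_j)\mathbb{I}\{-X_j>H_j\}\ge0$; hence the $j$-th term is at most $\mathbf{E}[-X_j;-X_j>H_j]\,\mathbf{P}(T>j-1)$. Combining \eqref{Up2}, the definition of $\oo{E}_n$ and \eqref{Up1+},
\[
\mathbf{P}(T>j-1)\le\frac{3\,\mathbf{E}Z^*_{j-1}}{B_{j-1}}\le\frac{3\,\oo{E}_n}{B_{j-1}}\le3\sqrt{\tfrac87}\,\frac{\oo{E}_n}{B_j}<\frac{4\,\oo{E}_n}{B_j}.
\]
Summing over $j$ and recognising $\sum_{j>N_6}B_j^{-1}\mathbf{E}[-X_j;-X_j>H_j]=\oo{F}_{N_6}$ gives \eqref{w13}.

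For \eqref{w14} the structural point is that $\{M_k\}$ is positive and non-decreasing, since $h_k$ is non-decreasing while $\uu_k$ is a running minimum, hence non-increasing. Writing $M^*_m=\sum_{j=N_6+1}^m M_j\bigl(\mathbf{P}(T>j-1)-\mathbf{P}(T>j)\bigr)$ and applying Abel summation, discarding the non-positive boundary term $-M_m\mathbf{P}(T>m)$, I would arrive at
\begin{align*}
M^*_m&\le M_{N_6+1}\mathbf{P}(T>N_6)+\sum_{k=N_6+1}^{m-1}(M_{k+1}-M_k)\mathbf{P}(T>k)\\
&\le3\,\oo{E}_n\left(\frac{M_{N_6+1}}{B_{N_6}}+\sum_{k=N_6+1}^{m-1}\frac{M_{k+1}-M_k}{B_k}\right).
\end{align*}
It remains to bound the bracket by $\sqrt{8/7}\,\oo{M}_{N_6}$. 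After rearranging the telescoping sum, the bracket equals $\dfrac{M_m}{B_{m-1}}+\sum_{k=N_6+1}^{m-1}M_k\bigl(B_{k-1}^{-1}-B_k^{-1}\bigr)$; combining the identity $B_i^{-1}=\sum_{k>i}\bigl(B_{k-1}^{-1}-B_k^{-1}\bigr)$ (valid because $B_k\to\infty$) with
\[
B_{k-1}^{-1}-B_k^{-1}=\frac{\sigma_k^2}{B_{k-1}B_k(B_k+B_{k-1})}\le\sqrt{\tfrac87}\,\frac{\sigma_k^2}{B_k^3}\qquad(k>N_1)
\]
and the monotonicity of $\{M_k\}$ (so that $M_m\sum_{k\ge m}\sigma_k^2/B_k^3\le\sum_{k\ge m}M_k\sigma_k^2/B_k^3$), everything collapses into $\sqrt{8/7}\sum_{k>N_6}M_k\sigma_k^2/B_k^3=\sqrt{8/7}\,\oo{M}_{N_6}$, and $3\sqrt{8/7}<4$ then yields \eqref{w14}. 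The $F^*_m$ half is a routine repetition of the argument behind \eqref{Up11}; the one genuinely delicate step, which I expect to be the main obstacle, is this last computation for $M^*_m$ — converting the $B_k^{-1}$-weights produced by \eqref{Up2} into the $\sigma_k^2/B_k^3$-weights of $\oo{M}_{N_6}$ and, in particular, absorbing the boundary term $M_m/B_{m-1}$ through the telescoping identity together with the monotonicity of $\{M_k\}$.
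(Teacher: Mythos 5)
Your argument is correct and follows essentially the same route as the paper's proof: decompose over the value of $T$, use $\{T=k\}\subset\{T>k-1\}$ together with independence of $X_k$ from $\mathcal{F}_{k-1}$, bound $\mathbf{P}(T>k-1)$ by $C\,\oo{E}_n/B_k$ via \eqref{Up2} and \eqref{Up1+}, and for $M^*_m$ perform the same Abel summation, exploiting monotonicity of $\{M_k\}$, the telescoping identity for $1/B_k$, and the bound $1/B_{k-1}-1/B_k\le C\,\sigma_k^2/B_k^3$. The only differences are cosmetic (your constant $3\sqrt{8/7}$ versus the paper's $3\cdot\tfrac43$, and keeping the residual term as $M_m/B_{m-1}$ rather than $M_m/B_m$), so nothing further is needed.
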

\begin{proof}
Since $F_{N_6}^*=M_{N_6}^*=0$ we consider only the case when $n\ge m> N_6$.
First note  that from \eqref{Up2} and \eqref{Up1+} we have
\begin{equation}
\mathbf{P}(T>k)\leq 3\frac{\mathbf{E}Z_k^*}{B_k}\leq 3\frac{\oo E_n}{B_k}\leq 4\frac{\oo E_n}{B_{k+1}}\quad
\mbox{if } n\ge k\ge N_6>N_2.      \label{w15}
\end{equation}
Using \eqref{w15} we obtain
\begin{align*}
F^*_{m}&=\sum^m_{k=N_6+1}\mathbf{E}[\ -X_{k};-X_{k}>H_{k},\ T=k]\\
&\le \sum^m_{k=N_6+1}\mathbf{E}[\ -X_{k};-X_{k}>H_{k},\ T>k-1]\\
&=\sum^m_{k=N_6+1}\mathbf{E}[\ -X_{k};-X_{k}>H_{k}]\mathbf{P}(T>k-1)\\
&\le 4\oo{E}_n\sum_{k>N_6}\frac{1}{B_{k}}\mathbf{E}[\ -X_{k};-X_{k}>H_{k}].
\end{align*}
Now \eqref{w13} follows from \eqref{w11}.

Next, it is easy to see that
\begin{align*}
M^*_{m}&= \sum^{m}_{k=N_6+1}M_k\mathbf{P}(T=k)
 =  \sum^{m}_{k=N_6+1}M_k(\mathbf{P}(T>k-1)-\mathbf{P}(T>k))\\
&=M_{N_6+1}\mathbf{P}(T>N_6)-M_m\mathbf{P}(T>m)
+\sum^{m-1}_{k=N_6+1}(M_{k+1}-M_k)\mathbf{P}(T>k).
\end{align*}
Applying again \eqref{w15} and noting that $\{M_k\}$ is positive and increasing by \eqref{gMonot}, we obtain
\begin{align*}
M^*_{m}&\le
3\oo{E}_n\frac{M_{N_6+1}}{B_{N_6}}
+3\oo{E}_n\sum^{m-1}_{k=N_6+1}\frac{M_{k+1}-M_k}{B_k}\\
&=3\oo{E}_n\sum^{m}_{k=N_6+1}M_k\left(\frac{1}{B_{k-1}}-\frac{1}{B_k}\right)+3\oo{E}_n\frac{M_m}{B_m}\\
&\le3\oo{E}_n\sum_{k>N_6}M_k\left(\frac{1}{B_{k-1}}-\frac{1}{B_k}\right).
\end{align*}
Now, using \eqref{Up1+} we have
\begin{align*}
\frac{1}{B_{k-1}}-\frac{1}{B_k}
&=\frac{B_k^2-B_{k-1}^2}{B_{k-1}B_k(B_{k-1}+B_k)}\\
&\leq \frac{\sigma_k^2}{2B^3_{k-1}}
\leq\left(\frac{8}{7}\right)^{3/2}\frac{\sigma_k^2}{2B_k^3}\leq\frac{4}{3}\frac{\sigma_k^2}{B^3_k}.
\end{align*}
Thus, \eqref{w14} is proved.
\end{proof}

\begin{lemma}\label{L+5}
For all $n> N_6$
\begin{gather}                                                                      \label{w21}
\oo{E}_n\le4C_6-4\uu_1,\qquad
E_n^{(\pm)}:=\mathbf{E}[(\uu_1-S_{T} )^\pm;T\leq n]
\le 3C_6+3|\uu_1|,
\end{gather}
where \ $C_6:=\mathbf{E}[(\uu_1 -S_{T})^+;T\leq N_6]<\infty$.
\end{lemma}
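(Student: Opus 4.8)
The plan is to derive from optional stopping a self-referential inequality for $\oo E_n$ and then solve it; this works precisely because $N_6$ was chosen in \eqref{w12} so that $\oo F_{N_6}+\oo M_{N_6}\le1/8$, while the boundary contribution is kept small via $G_n/B_n\to0$ and the upper bound \eqref{Up2}. To start, using the optional stopping identity $\mathbf{E}[S_k;T>k]=-\mathbf{E}[S_T;T\le k]$ already exploited in the proof of Lemma~\ref{Lmart}, and inserting $\uu_1$, one obtains for every $k\ge1$
\begin{align*}
\mathbf{E}Z_k^*&=\mathbf{E}[S_k-g_k;T>k]\\
&=\mathbf{E}[\uu_1-S_T;T\le k]-\uu_1+(\uu_1-g_k)\mathbf{P}(T>k)\\
&=E_k^{(+)}-E_k^{(-)}-\uu_1+(\uu_1-g_k)\mathbf{P}(T>k).
\end{align*}
Since $\mathbf{E}|S_j|<\infty$ for each $j$, the constant $C_6$ is finite, and once $E_k^{(+)}$ is shown to be finite so are all the terms above; thus both assertions of \eqref{w21} will follow from a good upper bound on $E_k^{(+)}$.

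The core step is the estimate $E_k^{(+)}\le C_6+\tfrac12\oo E_n$ for $N_6\le k\le n$. Splitting the expectation at $N_6$, the part over $\{T\le N_6\}$ equals $C_6$ by definition. For $N_6<j\le k$: on $\{T>j-1\}$ one has $S_{j-1}>g_{j-1}$, so, using $\uu_1-g_{j-1}=M_j-H_j$ from \eqref{w11},
$$
\uu_1-S_j=(\uu_1-S_{j-1})-X_j<(M_j-H_j)-X_j ,
$$
and distinguishing the cases $-X_j\le H_j$ and $-X_j>H_j$ (recall $M_j,H_j>0$) yields the pointwise domination
$$
(\uu_1-S_j)^+\le M_j+(-X_j)\mathbb{I}\{-X_j>H_j\}\qquad\text{on }\{T>j-1\}.
$$
Taking expectations over $\{T=j\}\subseteq\{T>j-1\}$, summing over $N_6<j\le k$, and identifying the two resulting sums with $M_k^*$ and $F_k^*$ of Lemma~\ref{L+4}, one gets $E_k^{(+)}\le C_6+M_k^*+F_k^*\le C_6+4\oo E_n(\oo M_{N_6}+\oo F_{N_6})\le C_6+\tfrac12\oo E_n$ by \eqref{w12}.

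To close the loop I would substitute this into the identity above: drop $-E_k^{(-)}\le0$; estimate $(\uu_1-g_k)\mathbf{P}(T>k)\le|\uu_1-g_k|\mathbf{P}(T>k)$ and use $|g_k|\le G_k\le B_k/16$ (from \eqref{Up1+}) together with $\mathbf{P}(T>k)\le3\mathbf{E}Z_k^*/B_k$ (from \eqref{Up2}, valid since $k\ge N_6>N_2$) to absorb the $g_k$-part into $\tfrac3{16}\mathbf{E}Z_k^*$; and bound the remaining $\uu_1$-terms by a multiple of $|\uu_1|$. After moving $\tfrac3{16}\mathbf{E}Z_k^*$ to the left side and taking the maximum over $N_6\le k\le n$, this produces a linear inequality $\oo E_n\le C(C_6+|\uu_1|)+\theta\,\oo E_n$ with $\theta<1$, which gives the first bound of \eqref{w21} (the precise constants in \eqref{w21} come from this bookkeeping, keeping track of the sign of $\uu_1$). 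The bound on $E_n^{(+)}$ is the core step at $k=n$, and the bound on $E_n^{(-)}$ follows by solving the identity for $E_n^{(-)}=E_n^{(+)}-\mathbf{E}Z_n^*-\uu_1+(\uu_1-g_n)\mathbf{P}(T>n)$ and applying the same estimates, using $\mathbf{E}Z_n^*>0$ (which holds by \eqref{Up2} and \eqref{must}).

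The main obstacle is the core step: one has to produce exactly the pointwise domination of $(\uu_1-S_j)^+$ that separates the moving-boundary part $M_j$ from the heavy-left-tail part $-X_j\mathbb{I}\{-X_j>H_j\}$, so that the two series are precisely those already controlled — in terms of $\oo E_n$ — by Lemma~\ref{L+4}. Once this is in place, everything else is the fixed-point argument for $\oo E_n$, and it closes only because of the choice $\oo M_{N_6}+\oo F_{N_6}\le1/8$ and the smallness of $G_n/B_n$; no further probabilistic input is needed.
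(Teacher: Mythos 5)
Your proposal is correct and is essentially the paper's own proof: the same pointwise domination $(\uu_1-S_T)^+\le M_T+(-X_T)\mathbb{I}\{-X_T>H_T\}$ (valid because $S_{T-1}>g_{T-1}$ and $M_T-H_T=\uu_1-g_{T-1}$, with $H_T,M_T>0$), the same reduction $E_k^{(+)}\le C_6+M_k^*+F_k^*\le C_6+\tfrac12\oo E_n$ via Lemma~\ref{L+4} and the choice of $N_6$ in \eqref{w12}, and the same self-referential (bootstrap) inequality for $\oo E_n$. The only divergence is bookkeeping: the paper closes the loop by invoking \eqref{Up3} directly, i.e.\ $\tfrac34\mathbf{E}Z_m^*\le E_m^{(+)}-E_m^{(-)}-\uu_1$, whereas your absorption of the boundary term through \eqref{M1}, \eqref{Up1+} and \eqref{Up2} gives bounds of the same form but with somewhat larger constants than the literal $4C_6-4\uu_1$ and $3C_6+3|\uu_1|$ of \eqref{w21} (your route naturally lands at something like $\oo E_n\le 8(C_6-\uu_1)$ or $\tfrac{16}{5}(C_6+|\uu_1|)$), which is harmless since only uniform-in-$n$ finiteness is used in the sequel.
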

\begin{proof}
To prove this assertion first note that
\begin{align*}
-S_{T}=-S_{T-1}-X_{T}\leq-g_{T-1}-X_{T}
\leq -g_{T-1}+H_{T}-X_{T}\mathbb{I}\{-X_{T}>H_{T}\}
\end{align*}
if only $H_{T}\geq 0$. Hence, with $H_{T}=h_{T}+g_{T-1}-\uu_{T}>0$
we obtain 
\begin{gather*}
(\uu_1-S_{T})^+\le \big(h_{T}+\uu_1-\uu_{T}
-X_{T}\mathbb{I}\{-X_{T}>H_{T}\}\big)^+=M_{T}
-X_{T}\mathbb{I}\{-X_{T}>H_{T}\}
\end{gather*}
with positive right hand side.
Thus, for $m\ge N_6$,
\begin{align*}
E_m^{(+)}&:=\mathbf{E}[(\uu_1 -S_{T})^+;T\leq m]\\
&\le\mathbf{E}[(\uu_1 -S_{T})^+;T\leq N_6]
+\mathbf{E}[M_{T}; N_6<T\le m]\\
&+\mathbf{E}[-X_{T};-X_{T}>H_{T},N_6<T\le m]
=C_6+F_m^*+M_m^*.
\end{align*}

Next, using \eqref{w13}, \eqref{w14} and \eqref{w12} we obtain
\begin{gather*}
E_m^{(+)}\le C_6+4\oo{E}_n(\oo{F}_{N_6}+\oo{H}_{N_6})\le C_6+4\oo{E}_n/8=C_6+\oo{E}_n/2.
\end{gather*}
Now, we have from  \eqref{Up3} that
\begin{align}   \nonumber
0<\frac{3}{4}\mathbf{E}Z_m^*\le\mathbf{E}[ -S_{T};T\leq m]
&=\mathbf{E}[\uu_1 -S_{T};T\leq m]-\uu_1= E_m^{(+)}- E_m^{(-)}-\uu_1\\
\label{w25}
&\le E_m^{(+)}-\uu_1\le C_6-\uu_1+\frac{\oo{E}_n}{2}.
\end{align}
Taking maximum in \eqref{w25} with respect to $m\in[N_6,n]$ we find
\begin{gather*}
\frac{3}{4}\oo{E}_n\le C_6-\uu_1+\frac{\oo{E}_n}{2}.
\end{gather*}
Hence, the first inequality in \eqref{w21}  is proved.

At last, we obtain from  \eqref{w25} with $m=n$ that
\begin{gather*}
E_n^{(+)}\le C_6+\oo{E}_n/2\le 3C_6-2\uu_1,
\quad
E_n^{(-)}< E_n^{(+)}-\uu_1\le 3C_6-3\uu_1.
\end{gather*}
So, all inequalities in  \eqref{w21} are proved.
\end{proof}

Now, from  \eqref{w21} by the monotone convergence theorem we obtain
\begin{gather*}
E_n^{(\pm)}=\mathbf{E}[(\uu_1-S_{T} )^\pm;T\leq n]\uparrow
\mathbf{E}(\uu_1-S_{T} )^\pm\le  3C_6+3|\uu_1|<\infty.
\end{gather*}
Hence, $\mathbf{E}|\uu_1-S_{T}|\le 6C_6+6|\uu_1|<\infty$, and there exists a finite limit
$$
\lim_{n\to\infty} \mathbf{E}[ -S_{T};T\leq n]
=\lim_{n\to\infty}\mathbf{E}[\uu_1 -S_{T};T\leq n]-\uu_1
=\lim_{n\to\infty} E_n^{(+)}-\lim E_n^{(-)}-\uu_1
$$
which is equal to $\lim_{t\to\infty}U_g(t)$ as it follows from  \eqref{main+} .

All assertions of Theorem \ref{T5} are now proved.

\printbibliography

\end{document}